\documentclass[a4paper,10pt]{amsart}
\usepackage[top=0.8in, bottom=0.8in, left=1.25in, right=1.25in]{geometry}
\usepackage{fancybox}
\usepackage{amsmath}
\usepackage{amssymb}
\usepackage{amsthm}
\usepackage{mathrsfs}
\usepackage{epstopdf}
\usepackage{epstopdf}
\usepackage{array} 
\usepackage{amscd}
\usepackage{color}
\usepackage{hyperref}

\usepackage[all]{xypic}
\usepackage{tikz}

\usepackage{amsfonts}
\usepackage{amssymb}
\usepackage{mathrsfs}

\setcounter{section}{-1}

\DeclareFontFamily{U}{rsfs}{%
\skewchar\font127}
\DeclareFontShape{U}{rsfs}{m}{n}{%
<-6>rsfs5<6-8.5>rsfs7<8.5->rsfs10}{}
\DeclareSymbolFont{rsfs}{U}{rsfs}{m}{n}
\DeclareSymbolFontAlphabet
{\mathrsfs}{rsfs}
\DeclareRobustCommand*\rsfs{%
\@fontswitch\relax\mathrsfs}

\theoremstyle{plain}
\newtheorem{thm}{Theorem}[section]
\newtheorem{prop}[thm]{Proposition}
\newtheorem{lem}[thm]{Lemma}

\newtheorem{defi}[thm]{Definition}

\newtheorem{rmk}[thm]{Remark}
\newtheorem{cor}[thm]{Corollary}

\newtheorem{prop-defi}[thm]{Proposition-Definition}
\newtheorem{thm-defi}[thm]{Theorem-Definition}
\newtheorem{lem-defi}[thm]{Lemma-Definition}

\newtheorem{conj}[thm]{Conjecture}

\newdimen\argwidth
\def\db[#1\db]{
 \setbox0=\hbox{$#1$}\argwidth=\wd0
 \setbox0=\hbox{$\left[\box0\right]$}
  \advance\argwidth by -\wd0
 \left[\kern.3\argwidth\box0 \kern.3\argwidth\right]}

\newcommand{\eE}{\mathcal{E}}

\newcommand{\lL}{\mathcal{L}}
\newcommand{\mM}{\mathcal{M}}

\newcommand{\oO}{\mathcal{O}}
\newcommand{\pP}{\mathcal{P}}

\newcommand{\Hom}{\mathop{\rm Hom}\nolimits}

\newcommand{\dR}{\mathbf{R}}
\newcommand{\dL}{\mathbf{L}}

\newcommand{\Hilb}{\mathop{\rm Hilb}\nolimits}

\newcommand{\Pic}{\mathop{\rm Pic}\nolimits}

\newcommand{\id}{\textrm{id}}

\newcommand{\ch}{\mathop{\rm ch}\nolimits}
\newcommand{\rk}{\mathop{\rm rk}\nolimits}
\newcommand{\td}{\mathop{\rm td}\nolimits}
\newcommand{\Ext}{\mathop{\rm Ext}\nolimits}
\newcommand{\Spec}{\mathop{\rm Spec}\nolimits}
\newcommand{\rank}{\mathop{\rm rank}\nolimits}
\newcommand{\Coh}{\mathop{\rm Coh}\nolimits}

\newcommand{\cneq}{\mathrel{\raise.095ex\hbox{:}\mkern-4.2mu=}}
\newcommand{\eqcn}{\mathrel{=\mkern-4.5mu\raise.095ex\hbox{:}}}

\newcommand{\DT}{\mathop{\rm DT}\nolimits}

\newcommand{\Imm}{\mathop{\rm Im}\nolimits}

\newcommand{\Ker}{\mathop{\rm Ker}\nolimits}

\newcommand{\RHom}{\mathop{\dR\mathrm{Hom}}\nolimits}

\makeatletter
 
  \@addtoreset{equation}{section}
\makeatother

\title[{Tautological stable pair invariants of CY 4-folds}]
{Tautological stable pair invariants \\ of Calabi-Yau 4-folds}
\date{}
\author{Yalong Cao}
\address{RIKEN Interdisciplinary Theoretical and Mathematical Sciences Program (iTHEMS), 2-1, Hirosawa, Wako-shi, Saitama, 351-0198, Japan}
\email{yalong.cao@riken.jp}
\author{Yukinobu Toda}
\address{Kavli Institute for the Physics and Mathematics of the Universe (WPI), The University of Tokyo Institutes for Advanced Study, The University of Tokyo, Kashiwa, Chiba 277-8583, Japan}
\email{yukinobu.toda@ipmu.jp}

\begin{document}
\maketitle
\begin{abstract}
Let $X$ be a Calabi-Yau 4-fold and $D$ a smooth divisor on it.   
We consider tautological complex associated with $L=\mathcal{O}_X(D)$ on the moduli space of Le Potier stable pairs and define its counting invariant by 
integrating the Euler class against the virtual class. We conjecture a formula for their generating series expressed using genus zero Gopakumar-Vafa invariants of $D$ and genus one Gopakumar-Vafa type invariants of $X$, which we verify in several examples.
When $X$ is the local resolved conifold, our conjecture reproduces a conjectural formula of Cao-Kool-Monavari in the PT chamber. In the JS chamber,
we completely determine the invariants and use it to confirm one of our previous conjectures. 
\end{abstract}



\tableofcontents

\section{Introduction}

\subsection{Background}
Gromov-Witten invariants are rational numbers that virtually count stable maps from complex curves to algebraic varieties (or symplectic manifolds).
Because of multiple-cover contributions, their enumerative meaning is a priori unclear. 
On Calabi-Yau 3-folds, motivated by string duality,  Gopakumar-Vafa \cite{GV} conjectured the existence of integral numbers (now called ``Gopakumar-Vafa invariants'') which determine Gromov-Witten invariants. This integrality conjecture was proved by Ionel-Parker \cite{IP} using methods from symplectic geometry.

On a Calabi-Yau 4-fold $X$ (i.e. a complex smooth projective 4-fold $X$ satisfying $K_X \cong \oO_X$), by virtual dimension reason, GW invariants vanish when genus $g\geqslant 2$. In \cite{KP}, Klemm and Pandharipande defined $g=0,1$ Gopakumar-Vafa type invariants (for all $\beta\in H_2(X,\mathbb{Z})$):
\begin{align}\label{intro g=0 GV}n_{0,\beta}(X)(\gamma)\in \mathbb{Q},\,\,\forall\,\, \gamma\in H^4(X,\mathbb{Z}), \quad n_{1,\beta}(X)\in \mathbb{Q},  \end{align}
in terms of Gromov-Witten invariants of $X$, and conjectured their integrality (see Section \ref{section on GV/GW} for more detail). 
Although many evidence of the integrality conjecture has been found, notably Ionel-Parker gave a proof of the genus zero integrality 
in the same paper \cite{IP}, intrinsic (e.g. sheaf theoretic) understanding of such invariants is still an interesting and important problem.

In \cite{CMT2}, Cao-Maulik-Toda gave a sheaf theoretic interpretation of \eqref{intro g=0 GV} using $\mathrm{DT}_4$ virtual classes (which are 
defined in general by Borisov-Joyce~\cite{BJ} and in special cases by ~\cite{CL1}, see also the recent work of Oh-Thomas \cite{OT} for an algebro-geometric construction) on moduli spaces of Pandharipande-Thomas (PT) stable pairs and primary insertions for $\gamma \in H^4(X, \mathbb{Z})$. The generating series of primary stable pair invariants
is conjectured to be
\begin{align}\label{intro:PTexp}
\mathrm{PT}(X)(\gamma) =\prod_{\beta} \left(
\exp(qy^{\beta})^{n_{0, \beta}(X)(\gamma)}
\cdot M(y^{\beta})^{n_{1, \beta}(X)}\right),
\end{align}
where $M(x)=\prod_{k\geqslant 1}(1-x^k)^{-k}$ is the MacMahon function (see \cite[\S0.7]{CMT2} for more details). Since primary stable pair invariants are integers, the integrality of GV type invariants 
\eqref{intro g=0 GV} is manifest from \eqref{intro:PTexp}. 
In \cite{CMT2, CKM2}, several examples were computed to support this conjecture. 
In \cite{CT1}, the authors introduced counting invariants of (Le Potier) $Z_t$-stable pairs (which recover PT stable pairs in the $t\to \infty$ limit) 
and used them to interpret \eqref{intro:PTexp} as a wall-crossing formula in the derived category of $X$. 

Apart from primary insertions, tautological insertions are usually interesting and important to consider, see for example \cite{EGL, Lehn} for works on
algebraic surfaces. In the setting of CY 4-folds, Cao-Kool \cite{CK1} considered integration of Euler classes of tautological bundles 
on virtual classes of Hilbert schemes of points and conjectured a closed formula for the generating series. 
At the same time, Nekrasov \cite{Nek} studied their 
$K$-theoretic generalization on the affine space $\mathbb{C}^4$ and found connections to physics. 
Later, Cao-Kool-Monavari \cite{CKM1} generalized such $K$-theoretic invariants to Hilbert schemes of curves and moduli spaces of PT stable pairs on 
toric CY 4-folds. Remarkably, they found a conjectural formula for the generating series of ($K$-theoretic) PT stable pair invariants on the local resolved conifold 
$\oO_{\mathbb{P}^1}(-1,-1,0)$. One motivation of this paper is to understand such a formula in the cohomological limit (see \cite[Appendix B]{CKM1}) from the perspective of global compact $\mathrm{CY_4}$ geometry.

\subsection{Our proposal}
Let $F$ be a one dimensional coherent sheaf and $s\in H^0(F)$ a section.
For an ample divisor $\omega$ on $X$, we denote the slope function
by $\mu(F)=\chi(F)/(\omega \cdot [F])$.
The pair $(F,s)$ is called $Z_t$-stable $($$t\in\mathbb{R}$$)$ if
\begin{enumerate}
\renewcommand{\labelenumi}{(\roman{enumi})}
\item for any subsheaf $0\neq F' \subset F$, we have 
$\mu(F')<t$,
\item for any
subsheaf $ F' \subsetneq F$ 
such that $s$ factors through $F'$, 
we have 
$\mu(F/F')>t$. 
\end{enumerate}
For a nonzero effective curve class $\beta \in H_2(X, \mathbb{Z})$ and $n\in \mathbb{Z}$, 
we denote by
\begin{align*}
P_n^t(X, \beta)
\end{align*}
the moduli space of 
$Z_t$-stable pairs 
$(F, s)$ with $([F], \chi(F))=(\beta, n)$. It has a wall-chamber structure and for a general $t \in \mathbb{R}$ (i.e. outside a finite subset of rational numbers in $\mathbb{R}$),  
it is a projective scheme and has a virtual class 
\begin{align}\label{intro vir cla}[P_n^t(X, \beta)]^{\mathrm{vir}}\in H_{2n}(P_n^t(X, \beta),\mathbb{Z}), \end{align}
which depends on the choice of orientation of certain real line bundle on it (Theorem \ref{existence of proj moduli space}). 

When $t<\frac{n}{\omega \cdot \beta}$, $P_n^t(X, \beta)$ is empty. The first nontrivial chamber appears when 
$t=\frac{n}{\omega \cdot \beta}+0^+$, which we call \textit{Joyce-Song (JS) chamber} (here $0^+$ denotes a sufficiently small positive number 
with respect to the fixed $\omega,\beta,n$). When $t\gg 1$, it recovers the moduli space of PT stable pairs (see Definition \ref{defi:PTJSpair} for more details).

To define tautological stable pair invariants using \eqref{intro vir cla}, we take a line bundle $L\in \Pic(X)$ and define its tautological complex:
\begin{align*}
L^{[n]}:=\dR \pi_{P\ast}(\mathbb{F}\otimes \pi_X^{\ast}L)\in \mathrm{Perf}(P^t_n(X, \beta)),
\end{align*}
where $\pi_X$, $\pi_P$ are projections from $X \times P^t_n(X,\beta)$
to the corresponding factors and 
$\mathbb{I}^{\bullet}=\{\oO \to \mathbb{F}\}$ 
is the universal $Z_t$-stable pair on $X \times P^t_n(X,\beta)$. 

The tautological $Z_t$-stable pair invariants of $X$ are defined by 
\begin{align}\label{intro taut inv}
P^t_{n,\beta}(L):=\int_{[P^t_n(X,\beta)]^{\rm{vir}}}e(L^{[n]})\in \mathbb{Z}.
\end{align}
These invariants are automatically zero unless $L\cdot \beta=0$ by the degree reason (see Section \ref{sect on tau sp inv}). 

In this paper, we are interested in the case when $L=\oO_X(D)$ corresponds to a (possibly empty) smooth divisor $D\subset X$. 
Our main conjecture is the following explicit expression of tautological stable pair invariants in terms of Gopakumar-Vafa invariants of $D$ and $X$. 
\begin{conj}\label{intro main conj}\emph{(Conjecture \ref{main conj})}
Let $(X,\omega)$ be a polarized CY 4-fold
with a projective surjective morphism 
\begin{align*}
	\pi \colon X \to B
\end{align*}
to a variety $B$. 
Let $i \colon D \hookrightarrow X$  
 be a smooth divisor of the form $D=\pi^{\ast}H$ for a 
Cartier divisor $H$ on $B$, 
and $L=\oO_X(D)$ be the associated line bundle. 
Fix a very general $t\in \mathbb{R}_{>0}$ $($i.e. outside a countable subset of rational numbers in $\mathbb{R}$$)$. 
Then for certain choice of orientation, we have 
\begin{align}\notag
	\sum_{n,\pi_*\beta=0}P^t_{n,\beta}(L)\,q^ny^{\beta} 
=\prod_{\begin{subarray}{c}\beta\in H_2(D,\mathbb{Z}) \\ \pi_*\beta=0   \end{subarray}}
\prod_{\begin{subarray}{c}1\leqslant k\leqslant [t(\omega\cdot\beta)]  \end{subarray}}(1-(-q)^ky^{i_*\beta})^{k\cdot n_{0,\beta}(D)}
\prod_{\begin{subarray}{c}\beta \in H_2(X, \mathbb{Z}) \\
		\pi_{\ast}\beta=0
\end{subarray}}M(y^{\beta})^{n_{1, \beta}(X)}. 
 \end{align}
Here $n_{0,\beta}(D)$ are genus zero GV invariants of $D$, 
$n_{1, \beta}(X)$ are genus one GV type invariants of $X$
and 
$M(x):=\prod_{k\geqslant 1}(1-x^k)^{-k}$ denotes the MacMahon function.
\end{conj}
The formulation of the 
identity
in this conjecture restricts to curve classes in fibers of $\pi$, 
so that genus zero GV invariants $n_{0, \beta}(D)$ are 
defined from moduli spaces of stable sheaves on $D$
(see Section \ref{sect on GV 3-fold}).
In most examples, curve classes on fibers are the same as curve classes which satisfy $L\cdot \beta=0$ (so there is no loss of generality restricting to 
such classes).
In contrast to $n_{0,\beta}(D)$, genus one GV type invariants $n_{1,\beta}(X)$ of $X$ in the conjectural formula are defined from Gromov-Witten theory (see Section  \ref{section on GV/GW}).

Our conjectural formula is written down by a calculation in the ideal CY 4-fold case (see Section \ref{sect on ideal case} for detail) where curves deform in families of expected
dimensions and have generic expected properties. Apart from this, we compute several examples to support our conjecture.   

\subsection{Verifications in examples}
Our computations focus on JS and PT chambers.
The first example is an elliptic fibered CY 4-fold
given by a Weierstrass model. 
\begin{thm}\emph{(Proposition \ref{prop on JS inv ell fib}, \ref{prop on PT inv ell fib})}
Let $\pi \colon  X \to \mathbb{P}^3$ be an elliptic fibered CY 4-fold \eqref{elliptic CY4}
given by a Weierstrass model and $f=\pi^{-1}(p)$ be a generic fiber. Then Conjecture \ref{intro main conj} holds for 
\begin{itemize} 
\item $n=1$, $\beta=r[f]$ $($$r\geqslant1$$)$ in JS chamber.
\item any $n$, $\beta=[f]$ in PT chamber.
\end{itemize}
\end{thm}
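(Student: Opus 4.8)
The plan is to compute both sides of the conjectural identity directly for the Weierstrass model and to match them, using crucially that $L=\oO_X(\pi^{\ast}H)$ restricts trivially to every fiber of $\pi$. I would begin by unwinding what the conjecture says for these classes. Since $[f]$ is primitive and the MacMahon factor is independent of $q$, in the PT chamber the coefficient of $y^{[f]}$ on the right-hand side reduces to the two identities $P^t_{0,[f]}(L)=n_{1,[f]}(X)$ and $P^t_{n,[f]}(L)=(-1)^{n-1}\,n\,n_{0,[f]}(D)$ for $n\geqslant 1$, the latter coming from $\sum_{k\geqslant 1}k(-q)^k=-q/(1+q)^2$. In the JS chamber with $n=1$, a parallel bookkeeping shows that only the $k=1$, $\beta'=r[f]$ factor of the genus-zero product survives in the coefficient of $q\,y^{r[f]}$, so the statement collapses to the single identity $P^t_{1,r[f]}(L)=n_{0,r[f]}(D)$. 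Thus the theorem amounts to these explicit numerical claims.

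The geometric mechanism I would exploit is that any class with $\pi_{\ast}\beta=0$ is supported on fibers, yielding a morphism $\rho\colon P^t_n(X,\beta)\to\mathbb{P}^3$ recording the base point of the support. On the universal curve the composite $\pi\circ\pi_X$ factors through $\rho\circ\pi_P$, so $\pi_X^{\ast}L$ is pulled back from the base and the projection formula gives $L^{[n]}\cong(\dR\pi_{P\ast}\mathbb{F})\otimes\rho^{\ast}\oO_{\mathbb{P}^3}(H)$. This is the decisive simplification: the tautological complex is the purely fiberwise complex $\dR\pi_{P\ast}\mathbb{F}$ twisted by a class pulled back from the base, and its Euler class expands as $e(L^{[n]})=\sum_{j}c_{n-j}(\dR\pi_{P\ast}\mathbb{F})\,(\rho^{\ast}H)^j$. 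The powers of $\rho^{\ast}H$ implement the restriction from $X$ to the divisor $D=\pi^{-1}(H)$, so that the genus-zero part of the count is governed by curve counting on the fibered threefold $D\to H$, while the untwisted part is governed by the genus-one geometry of $X$.

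With this in hand I would carry out the two computations. In the PT chamber with $\beta=[f]$, the space $P_n(X,[f])$ fibers over $\mathbb{P}^3$ with fiber $\Sym^n C$ over a smooth elliptic fiber $C$ (stable pairs on a smooth genus-one curve being effective divisors), the singularities concentrating over the discriminant $\Delta\subset\mathbb{P}^3$. The case $n=0$ is the family of pairs $(\oO_C,\mathrm{id})$ with trivial tautological factor, whose $\mathrm{DT}_4$ degree I would match to the genus-one invariant $n_{1,[f]}(X)$ computed from the Gromov-Witten side. For $n\geqslant 1$ the twist by $\rho^{\ast}H$ restricts the base integral to $D$ and further to the nodal (hence rational) fibers over $\Delta\cap H$, producing $n_{0,[f]}(D)$, while the fiber integral of the Chern classes of $\dR\pi_{P\ast}\mathbb{F}$ over $\Sym^n C$ supplies the coefficient $(-1)^{n-1}n$; summing over $n$ rebuilds $q\,n_{0,[f]}(D)/(1+q)^2$. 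In the JS chamber with $n=1$, the sheaf has $\chi=1$, so $L^{[1]}$ is a rank-one complex and the invariant is the pairing $\int_{[P^t_1(X,r[f])]^{\mathrm{vir}}}c_1(L^{[1]})$ of $H^2$ with $H_2$; here I would identify the moduli space with a relative moduli of stable one-dimensional sheaves of multiplicity $r$ on fibers equipped with a section, compute its $\mathrm{DT}_4$ class, and evaluate the pairing against the genus-zero GV count $n_{0,r[f]}(D)$ of the nodal fibers of $D$ and their multiple covers.

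The main obstacle is the $\mathrm{DT}_4$ virtual class. Because $X$ is compact there is no torus to localize against, so the class must be constructed from the self-dual obstruction theory via the square-root Euler class and evaluated by hand on moduli spaces that are genuinely singular over the discriminant, and for $r\geqslant 2$ over sheaves supported with multiplicity on fibers. Pinning down the orientation — hence the precise signs in the formula — and proving that the higher-multiplicity JS contributions reproduce exactly $n_{0,r[f]}(D)$, including the multiple-cover corrections of the nodal fibers, is where the genuine difficulty lies. Once the virtual class and orientation are controlled, the surviving fiber integrals over symmetric products and the intersection numbers on $\mathbb{P}^3$ are routine.
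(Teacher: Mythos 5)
Your coefficient extraction is correct (the theorem does collapse to $P^{\mathrm{JS}}_{1,r[f]}(L)=\pm\, n_{0,r[f]}(D)$ and $P_{n,[f]}(L)=\pm\, n\cdot n_{0,[f]}(D)$ for $n\geqslant 1$), and the projection-formula observation $L^{[n]}\cong (\dR\pi_{P\ast}\mathbb{F})\otimes \rho^{\ast}\oO_{\mathbb{P}^3}(S)$ is sound. But the proof has a genuine gap at exactly the point you defer: you never determine the $\mathrm{DT}_4$ virtual classes, and you propose no workable method for doing so. What you do sketch --- evaluating the square-root/Borisov--Joyce class ``by hand'' over singular loci, with multiple-cover corrections for $r\geqslant 2$ --- is not a viable route, and it also misdiagnoses the geometry: the relevant moduli spaces are smooth ($P_1^{\mathrm{JS}}(X,r[f])\cong M_1(X,r[f])\cong X$ by \cite[Proposition 5.4]{CT1} and Lemma \ref{lem:fib:isom}, and $P_n(X,[f])$ is a $\mathbb{P}^{n-1}$-bundle over $M_n(X,[f])\cong X$). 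The difficulty is not singularity over the discriminant but the rank-three obstruction bundle, which makes the virtual class $\pm\mathrm{PD}(c_3(X))$ rather than the fundamental class. The paper's proof consists precisely of supplying these identifications: the JS pair moduli equals the sheaf moduli with virtual classes matched; $[M_1(X,r[f])]^{\mathrm{vir}}=\pm\mathrm{PD}(c_3(X))$ from \cite{CMT1}; and, in the PT chamber, the new Lemma \ref{lem on PT vir class ell fib} proving $[P_n(X,[f])]^{\mathrm{vir}}=\psi^{\ast}[M_n(X,[f])]^{\mathrm{vir}}$ by comparing obstruction theories via $H^1(F)=0$ and Serre duality. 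Your picture of a $\Sym^n C$-fibration over $\mathbb{P}^3$ integrated against its fundamental class omits this obstruction bundle and is dimensionally inconsistent ($\dim P_n(X,[f])=n+3$ while $e(L^{[n]})$ has degree $n$).

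The second missing ingredient is that you never evaluate either side of the identity. The threefold invariant $n_{0,r[f]}(D)$ must itself be computed: the paper shows $n_{0,r[f]}(\pi^{-1}(S))=960\,l$ for \emph{every} $r\geqslant 1$ via a Fourier--Mukai identification $M_1(W,r[f])\cong W$ and the normal bundle sequence relating $c_3(W)-c_1(W)c_2(W)$ to $c_3(X)\cdot c_1(\oO_X(W))$. In particular, in this sheaf-theoretic definition there are no ``multiple-cover corrections of the nodal fibers'' to control; that concern imports a Gromov--Witten picture which plays no role here. On the pair side, the evaluation proceeds not by fiberwise integration but by Grothendieck--Riemann--Roch against the normalized universal sheaf (JS chamber) or the projective-bundle relation for $\mathbb{P}(\pi_{M\ast}\mathbb{F})$ together with Lemma \ref{lem identify Mn and M1} (PT chamber), reducing everything to descendent integrals such as $\int_{[M_1(X,[f])]^{\mathrm{vir}}}\pi_{M\ast}\left(\ch_4(\mathbb{F}_{\mathrm{norm}})\cdot \pi_X^{\ast}c_1(L)\right)=\pm 960\,l$, which are quoted from \cite{CMT1, CT2}. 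Without these two inputs --- the virtual class identifications and the GV/descendent evaluations --- your outline reduces the theorem to itself rather than proving it.
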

In the above cases, there exists a forgetful morphism 
$$P_n^t(X,\beta)\to M_n(X,\beta), \quad (s \colon \oO_X\to F)\mapsto F, $$
to the moduli scheme $M_n(X,\beta)$ of one dimensional stable sheaves $F$ with $[F]=\beta$ and $\chi(F)=n$.
By Grothendieck-Riemann-Roch formula, we reduce the computation to 
our previous computations of primary and descendent invariants on $M_n(X,\beta)$ ~\cite{CMT1, CT1}. 

Our next example is the product of a smooth projective CY 3-fold $Y$ and an elliptic curve $E$.
\begin{thm}\emph{(Theorem \ref{thm on prod of CY3}, Remark \ref{rmk on js on prod})}
Let $\pi \colon X=Y\times E\to E$ be the projection. Then Conjecture \ref{intro main conj} holds for 
\begin{itemize} 
\item $n=1$, any $\beta\in H_2(Y,\mathbb{Z})\subseteq H_2(X,\mathbb{Z})$ in JS chamber.
\item any $n$, an irreducible curve class $\beta\in H_2(Y,\mathbb{Z})\subseteq H_2(X,\mathbb{Z})$ in PT chamber
if there is an ample divisor $H$ on $Y$ such that $\mathrm{g.c.d}(n,H\cdot\beta)=1$.
\end{itemize}
\end{thm}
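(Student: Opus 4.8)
The plan is to exploit the product structure $X = Y \times E$ together with the forgetful morphism $P_n^t(X,\beta) \to M_n(X,\beta)$, reducing everything to the $\mathrm{DT}_3$ theory of the Calabi-Yau 3-fold $Y$. The first step is geometric: since $\pi \colon Y \times E \to E$ is the projection and $\pi_\ast \beta = 0$, any $Z_t$-stable pair $(F,s)$ with $[F]=\beta$ has one-dimensional support whose image in $E$ is a single point (a degree-zero map of curves to $E$ is constant); hence $F = i_{p\ast}G$ is the pushforward of a $Z_t$-stable pair $G$ on the fiber $Y\times\{p\}\cong Y$, with $p\in E$ recording the support point. This yields compatible product decompositions $P_n^t(X,\beta)\cong P_n^t(Y,\beta)\times E$ and $M_n(X,\beta)\cong M_n(Y,\beta)\times E$, so that I may work over $Y$ with one extra $E$-direction.

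The second step pins down the virtual class under this decomposition. Writing $F = G\boxtimes\oO_p$ and using K\"unneth with $\Ext^\ast_E(\oO_p,\oO_p)=\mathbb{C}\oplus\mathbb{C}[-1]$ gives $\Ext^k_X(F,F)\cong\Ext^k_Y(G,G)\oplus\Ext^{k-1}_Y(G,G)$. In particular the self-dual obstruction space $\Ext^2_X(F,F)\cong\Ext^1_Y(G,G)\oplus\Ext^1_Y(G,G)^\vee$ is the hyperbolic form built from the $\mathrm{DT}_3$ deformation space of $Y$, whose tautological maximal isotropic subspace is $\Ext^1_Y(G,G)$. Choosing the orientation so that the $\mathrm{DT}_4$ half-dimensional obstruction theory on $X$ restricts to the $\mathrm{DT}_3$ obstruction theory on $Y$ along the $E$-family, I expect $[P_n^t(X,\beta)]^{\mathrm{vir}}$ to be of the form $W\boxtimes[E]$, with $W$ the induced $\mathrm{DT}_3$-type class on $P_n^t(Y,\beta)$ of complex dimension $n-1$ and $[E]$ the fundamental class.

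The third step computes both sides. For the right-hand side I first claim $n_{1,\beta}(X)=0$ for every fiber class: a genus-one stable map in class $\beta$ with $\pi_\ast\beta=0$ factors through a single fiber, and the normal $E$-direction contributes $H^1(C,\oO_C)$ to the obstruction, whose contribution to the Klemm-Pandharipande invariant is proportional to $c_1(T_E)=0$; hence the MacMahon factor is trivial. For the left-hand side I use $L=\pi^\ast\oO_E(H)$, which restricts trivially to each fiber, so that $L^{[n]}\cong\mathbb{L}_Y\boxtimes\mathcal{L}_E$ with $\mathbb{L}_Y=\dR\pi_{P\ast}\mathbb{G}$ of rank $n$ and $\mathcal{L}_E$ a line bundle on $E$ with $\int_E c_1(\mathcal{L}_E)=\deg H$. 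Since $c_1(\mathcal{L}_E)^2=0$ on the curve $E$, only the term $c_{n-1}(\mathbb{L}_Y)\,c_1(\mathcal{L}_E)$ of $e(L^{[n]})$ survives integration against $W\boxtimes[E]$, and the $E$-integral yields $\deg H$. The remaining $Y$-integral is a $\mathrm{DT}_3$ tautological invariant: in the JS chamber ($n=1$) the class $W$ is the dimension-zero $\mathrm{DT}_3$ virtual class of $M_1(Y,\beta)$, whose degree is $n_{0,\beta}(Y)$, matching the coefficient of $q^1y^\beta$ in $\prod_k(1-(-q)^k y^\beta)^{k\,n_{0,\beta}(D)}$; in the PT chamber for irreducible $\beta$ it is the descendent integral $\int_W c_{n-1}(\mathbb{L}_Y)=(-1)^{n+1}n\,n_{0,\beta}(Y)$, matching the coefficient of $y^\beta$ in the same product. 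Here I invoke the CY$_3$ computations of \cite{CMT1, CT1}, and note that the irreducible-class case is the global counterpart of the local resolved conifold of \cite{CKM1}.

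The main obstacle is the virtual-class comparison of the second step: making rigorous that the $\mathrm{DT}_4$ class on $Y\times E$ dimensionally reduces to the $\mathrm{DT}_3$ class on $Y$, with a single consistent choice of orientation that makes the global sign work out, and then extracting the exact combinatorial factor $(-1)^{n+1}n$ in the PT chamber from the interaction of $e(L^{[n]})$ with the fibration of $P_n^t(Y,\beta)$ over the curves in class $\beta$. The genus-one vanishing and the product decompositions are comparatively routine; the heart of the argument is showing that these choices reproduce the signed, $q$-weighted genus-zero series rather than merely its unsigned Euler-characteristic shadow.
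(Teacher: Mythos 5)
Your reduction steps are sound and essentially follow the paper: the product decomposition $P_n(X,\beta)\cong P_n(Y,\beta)\times E$ together with the splitting of the virtual class as $[P_n(Y,\beta)]_{\mathrm{pair}}^{\mathrm{vir}}\otimes[E]$ is exactly the paper's Lemma (quoted from \cite[Proposition 2.11]{CMT2}), and your observation that only the term $c_{n-1}\cdot c_1(\mathcal{L}_E)$ of the Euler class survives integration over the $E$-factor reproduces the paper's reduction of the conjecture to the single 3-fold identity
\begin{equation*}
\int_{[P_n(Y,\beta)]_{\mathrm{pair}}^{\mathrm{vir}}}c_{n-1}\left(\pi_{P\ast}\mathbb{F}\right)=\pm\,n\cdot n_{0,\beta}(Y).
\end{equation*}
But at precisely this point your proposal has a genuine gap: you dispose of the identity by writing ``I invoke the CY$_3$ computations of \cite{CMT1, CT1}'', and no such computation exists in those references. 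The class $[P_n(Y,\beta)]_{\mathrm{pair}}^{\mathrm{vir}}$ is built from the \emph{pair} deformation--obstruction theory, which the paper explicitly warns is different from the Pandharipande--Thomas obstruction theory of \cite{PT}; so neither standard PT descendent results nor the 4-fold computations of \cite{CMT1, CT1} (which concern moduli of sheaves and pairs on 4-folds, not this 3-fold pair class) can be cited for it. This integral is the actual content of the paper's Theorem, and it is proved there by a mechanism entirely absent from your proposal: Manolache's virtual push-forward formula \cite{Man} applied to the forgetful morphism $g\colon P_n(Y,\beta)\to M_n(Y,\beta)$, which for irreducible $\beta$ identifies $P_n(Y,\beta)$ with the projective bundle $\mathbb{P}(\pi_{M\ast}\mathbb{F})$. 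One gets $g_*\bigl(c_{n-1}(\pi_{P\ast}\mathbb{F})\cdot[P_n(Y,\beta)]_{\mathrm{pair}}^{\mathrm{vir}}\bigr)=c\cdot[M_n(Y,\beta)]^{\mathrm{vir}}$, and the constant is pinned down by a fiberwise computation on $\mathbb{P}(H^0(Y,F))$ with obstruction bundle $H^1(Y,F)\otimes\oO(1)$, namely $c=\int H^{h^1}(1+H)^n|_{\deg(n-1)}=n$; combining this with Toda's result that $\deg[M_n(Y,\beta)]^{\mathrm{vir}}=n_{0,\beta}(Y)$ for irreducible $\beta$ (independently of $n$) yields the factor $n\cdot n_{0,\beta}(Y)$. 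Without this (or an equivalent) argument, the factor $n$ in the PT chamber is not derived but merely asserted.

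A secondary remark: you locate the ``main obstacle'' in the virtual-class comparison of your second step, but that comparison is precisely what the paper imports wholesale from \cite[Proposition 2.11]{CMT2}; the Künneth/hyperbolic-form sketch you give is a reasonable heuristic for it, and your genus-one vanishing sketch is likewise acceptable (the paper handles the analogous point by citation to \cite[Lemma 2.16]{CMT2}). The part you treat as routine and citable --- the evaluation of the tautological integral against the pair virtual class --- is the one place where a new idea is required.
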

In these cases, there exists an isomorphism (Lemma \ref{prop on pair moduli on CY3}):
\begin{align*}P^t_n(X,\beta)\cong P^t_n(Y,\beta)\times E, 
\end{align*}
such that the virtual classes satisfy
\begin{equation}[P^t_n(X,\beta)]^{\mathrm{vir}}=[P^t_n(Y,\beta)]_{\mathrm{pair}}^{\mathrm{vir}}\otimes [E], \nonumber \end{equation}
for a choice of orientation in defining the LHS.
Here $[P^t_n(Y,\beta)]_{\mathrm{pair}}^{\mathrm{vir}}$ is the virtual class defined using the pair deformation obstruction theory, different from the construction of \cite{PT}.
Our conjecture then reduces to a statement on CY 3-folds, which we prove using Manolache's virtual push-forward formula \cite{Man}. 

Our last compact example is given by a general $(2,5)$ hypersurface $X\hookrightarrow \mathbb{P}^{1}\times \mathbb{P}^{4}$. 
By the projection to $\mathbb{P}^{1}$, it admits a quintic 3-fold fibration.
\begin{prop}\emph{(Proposition \ref{prop on quintic fib})}
Let $\pi: X\rightarrow \mathbb{P}^{1}$ be the above quintic 3-fold fibration.
Then Conjecture \ref{intro main conj} holds for any $n$ and irreducible class in PT chamber.
\end{prop}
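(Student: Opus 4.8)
The plan is to exploit the quintic $3$-fold fibration $\pi\colon X\to\mathbb{P}^1$ together with the fact that $L=\oO_X(\pi^{\ast}H)=\pi^{\ast}\oO_{\mathbb{P}^1}(H)$ is pulled back from the base, so that $L|_{Y_p}\cong\oO_{Y_p}$ on every fiber $Y_p=\pi^{-1}(p)$. First I would show that, for an irreducible class $\beta$ with $\pi_{\ast}\beta=0$ and $t$ in the PT chamber, every $Z_t$-stable pair $(F,s)$ has $\Supp F$ contained in a single fiber, since $\beta$ cannot be written as a sum of nonzero effective fiber classes supported in distinct fibers. This produces a support morphism $\rho\colon P^t_n(X,\beta)\to\mathbb{P}^1$ whose fiber over $p$ is the CY$_3$ PT moduli space $P^t_n(Y_p,\beta)$, realizing $P^t_n(X,\beta)$ as a (non-product) family of quintic pair moduli over $\mathbb{P}^1$; this is the genuine-fibration analogue of the product isomorphism of Lemma \ref{prop on pair moduli on CY3}.

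Next I would analyze the tautological complex. Because $\mathbb{F}$ is supported on the locus where $\pi\circ\pi_X=\rho\circ\pi_P$, the projection formula gives
\begin{align*}
L^{[n]}=\dR\pi_{P\ast}\bigl(\mathbb{F}\otimes\pi_X^{\ast}L\bigr)\cong \oO_X^{[n]}\otimes\rho^{\ast}\oO_{\mathbb{P}^1}(H),\qquad \oO_X^{[n]}:=\dR\pi_{P\ast}\mathbb{F},
\end{align*}
where $\oO_X^{[n]}$ is the tautological complex of the trivial bundle and has rank $n$. Since $\dim\mathbb{P}^1=1$ forces $c_1(\rho^{\ast}\oO_{\mathbb{P}^1}(H))^2=0$, the Euler class splits as
\begin{align*}
e(L^{[n]})=e(\oO_X^{[n]})+c_1(\rho^{\ast}\oO_{\mathbb{P}^1}(H))\cdot c_{n-1}(\oO_X^{[n]}),
\end{align*}
and I would integrate the two summands separately against $[P^t_n(X,\beta)]^{\mathrm{vir}}$.

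For the second summand, $c_1(\rho^{\ast}\oO_{\mathbb{P}^1}(H))$ is Poincar\'e dual to $\deg H$ copies of a generic (smooth) fiber $P^t_n(Y_p,\beta)$; restricting the $\mathrm{DT}_4$ virtual class to such a fiber and identifying it with the CY$_3$ pair virtual class of the quintic $Y_p$ (exactly as in the $Y\times E$ computation, via Manolache's virtual push-forward \cite{Man}), the residual integral of $c_{n-1}(\oO_X^{[n]})$ becomes a quintic PT invariant that I would evaluate using \cite{CMT1, CT1} and the CY$_3$ PT/GV correspondence; this accounts for the genus-zero factor $\prod_{k}(1-(-q)^k y^{i_{\ast}\beta})^{k\,n_{0,\beta}(D)}$ with $n_{0,\beta}(D)=n_{0,\beta}(Y_p)$, giving $P^t_{n,\beta}(L)=(-1)^{n+1}n\,n_{0,\beta}(D)$ for $n\geqslant 1$. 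For the first summand, $\int_{[P^t_n(X,\beta)]^{\mathrm{vir}}}e(\oO_X^{[n]})$ is precisely the tautological invariant for the trivial line bundle $L=\oO_X$; I would show it contributes only at $n=0$ and reproduces the MacMahon factor $M(y^\beta)^{n_{1,\beta}(X)}$, using that $\deg[P^t_0(X,\beta)]^{\mathrm{vir}}=n_{1,\beta}(X)$. Assembling the two contributions then matches the right-hand side of Conjecture \ref{main conj}.

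The hard part will be the virtual-class comparison inside the non-product family: relating the half-dimensional $\mathrm{DT}_4$ class of $P^t_n(X,\beta)$ to the CY$_3$ symmetric pair obstruction theory of the fibers, with the correct choice of orientation and sign, and controlling the finitely many singular fibers of the quintic fibration over the discriminant. I also expect the genus-one piece to be delicate: isolating exactly $n_{1,\beta}(X)$ from the trivial-bundle invariant requires matching the global genus-one Gopakumar-Vafa invariant of $X$ with the fiberwise genus-zero data through the Gromov-Witten/GV theory of the fibration, rather than by a direct moduli identification.
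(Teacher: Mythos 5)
Your overall skeleton---fiber-supported pairs, the splitting $e(L^{[n]})=e(\oO_X^{[n]})+c_1(\rho^{\ast}\oO_{\mathbb{P}^1}(H))\,c_{n-1}(\oO_X^{[n]})$, vanishing of the trivial-bundle summand via its nowhere-vanishing tautological section, and a fiberwise quintic computation producing $2875\,n$---is sound, but the step you yourself flag as ``the hard part'' is a genuine gap, not a deferred technicality, and it is exactly where the paper's proof takes a different and much easier route. You propose to cap $[P^t_n(X,\beta)]^{\mathrm{vir}}$ with $c_1(\rho^{\ast}\oO_{\mathbb{P}^1}(H))$, identify the result with the CY 3-fold pair class $[P_n(Y_p,\beta)]_{\mathrm{pair}}^{\mathrm{vir}}$ of a fiber, and then run Manolache's push-forward ``exactly as in the $Y\times E$ computation.'' But this conflates two steps: Manolache's formalism \cite{Man} is applied in Theorem \ref{thm on prod of CY3} entirely \emph{within} CY 3-fold theory (to the forgetful map $P_n(Y,\beta)\to M_n(Y,\beta)$), and only \emph{after} the $\mathrm{DT}_4$ class has been converted into a CY 3-fold pair class by Lemma \ref{prop on pair moduli on CY3}, whose proof relies on the product structure of $Y \times E$ to split the obstruction theory globally. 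For a genuine (non-product) quintic fibration no such splitting exists; moreover the $\mathrm{DT}_4$ class is constructed by Borisov-Joyce \cite{BJ} and does not come from a Behrend-Fantechi obstruction theory, so there is no defined operation of ``restricting it to a fiber of $\rho$'' and comparing with the fiber's symmetric-obstruction-theory class. Nothing in the paper or the cited literature supplies this, and your proposal offers no argument for it, so the proof does not close.

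What makes the proposition provable is an observation your proposal misses: for a generic $(2,5)$ hypersurface, Lemma \ref{lem on quin fib} shows the Hilbert scheme of lines is a \emph{smooth} curve; every stable sheaf in the irreducible class $[l]$ is a line bundle on such a line, so $M_n(X,[l])$ is smooth of dimension one and the forgetful map $\psi\colon P_n(X,[l])\to M_n(X,[l])$ is a $\mathbb{P}^{n-1}$-bundle. Hence $P_n(X,[l])$ is smooth of dimension $n$, equal to the virtual dimension, and the virtual class is the ordinary fundamental class for a choice of orientation---after which no virtual-class comparison is needed at all. The paper then takes the tautological section $\sigma$ of $L^{[n]}$ with zero locus $P_n(Y,[l])$ (Lemma \ref{lem on tat sect}), proves $\mathrm{Coker}(d\sigma)\cong T_{P_n(Y,[l])}$, and concludes by excess intersection that $\int_{P_n(X,[l])}e(L^{[n]})=\chi(P_n(Y,[l]))=2875\,\chi(\mathbb{P}^{n-1})=2875\,n$. (Your own splitting would also finish in two lines once smoothness is known, since $c_1(\rho^{\ast}\oO_{\mathbb{P}^1}(1))\cap[P_n(X,[l])]$ is then an honest fiber, a disjoint union of $2875$ copies of $\mathbb{P}^{n-1}$, on which $c_{n-1}(\oO_X^{[n]})=nH^{n-1}$.) A secondary flaw: your claim that the trivial-bundle summand ``reproduces the MacMahon factor using $\deg[P^t_0(X,\beta)]^{\mathrm{vir}}=n_{1,\beta}(X)$'' invokes a statement that is itself conjectural, so it cannot be used in a proof; for an irreducible class the honest statement is simply that $P_0(X,[l])=\emptyset$, so that summand contributes zero.
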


\subsection{Local resolved conifold}
When restricted to the local resolved conifold $\oO_{\mathbb{P}^1}(-1,-1,0)$, 
one can define counting invariants using a localization formula and Conjecture \ref{intro main conj} still makes sense 
(see Section \ref{sect on local res conifold} for JS chamber and \cite{CT3} for general cases). In PT chamber, our conjecture recovers Cao-Kool-Monavari's conjectural formula in the cohomological limit (\cite[Appendix B]{CKM1}). 
In JS chamber, we completely 
determine the invariants
and prove an analogy of Conjecture \ref{intro main conj} in this setting (Theorem \ref{intro calc of JS equi inv}).

The key observation is to work with $X=\oO_{\mathbb{P}^1}(-1,-1)\times C$ for a smooth projective curve $C$. 
Although it is not necessarily a CY 4-fold, the moduli space $P^{\mathrm{JS}}_n(X,d)$ of JS stable pairs 
is a smooth projective variety of dimension $n$ (Lemma \ref{lem on smoothness of JS moduli}). 
Therefore it still makes sense to define tautological invariants. 
Note that it is enough to consider the case 
$n=kd$ for some $k \geqslant 1$, 
as $P_n^{\mathrm{JS}}(X, d)=\emptyset$ otherwise. 
A complete solution in the $n=kd$ case is given as follows: 
\begin{thm}\label{intro thm on integ for any C}\emph{(Theorem \ref{thm on integ for any C})}
Let $X=\oO_{\mathbb{P}^1}(-1,-1)\times C$ for a smooth projective curve $C$, 
and $\pi \colon X \to C$ be the projection. 
We take $L=\pi^{\ast}L_C$
for a degree $a \geqslant 0$ line bundle $L_C$ on $C$. 
Then for any fixed $k\geqslant 1$, 
we have an identity of generating series
\begin{align*}
\sum_{d\geqslant 0,\,n=kd}&\int_{P^{\mathrm{JS}}_{n}(X,d)}e(L^{[n]})\,y^d=(1+y)^{ka}.
\end{align*}
\end{thm}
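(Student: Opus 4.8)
The plan is to exploit that $P:=P^{\mathrm{JS}}_{n}(X,d)$ (with $n=kd$) is a \emph{smooth} projective variety of dimension $n$ by Lemma \ref{lem on smoothness of JS moduli}, so that $e(L^{[n]})$ is the top Chern class of an honest vector bundle and the invariant is a genuine intersection number. First I would check that $L^{[n]}$ is concentrated in degree $0$: the sheaf $F$ of any $(F,s)\in P$ is pure of dimension one, supported on fibres of $\pi$, and restricts on its one-dimensional support to a sheaf with vanishing $H^1$ (e.g.\ $\oO_{\mathbb{P}^1}(k-1)$ on each reduced $\mathbb{P}^1$-component, using $k\geqslant 1$); tensoring by the fibrewise-trivial $L$ preserves this, so $\dR^1\pi_{P\ast}(\mathbb{F}\otimes\pi_X^\ast L)=0$ and $L^{[n]}$ is a rank-$n$ bundle. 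Next I record that only $a$ matters: since $C$ is a curve, $c_1(L)=\pi^\ast c_1(L_C)$ squares to zero, so by Grothendieck--Riemann--Roch $\ch(L^{[n]})=\pi_{P\ast}\big(\ch(\mathbb{F})\,(1+\pi_X^\ast c_1(L))\,\td\big)$ depends on $L$ only through $c_1(L_C)\in H^2(C,\mathbb{Z})\cong\mathbb{Z}$, i.e.\ only through $a$. Hence $\int_P e(L^{[n]})$ is a function of $(k,d,a)$ alone, and I may replace $L_C$ by $\oO_C(p_1+\cdots+p_a)$ for distinct points $p_1,\dots,p_a$ when $a\geqslant 1$; the case $a=0$ forces $L^{[n]}=\oO^{[n]}$, only $d=0$ survives, and the integral is $1=(1+y)^0$.

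The core of the argument is a support morphism. Because $\beta=d[\mathbb{P}^1]$ is a fibre class, $\Supp(F)$ is contracted by $\pi$ to a length-$d$ cycle on $C$, and recording it gives $\rho\colon P\to\Sym^d C$. Over the open locus of reduced cycles $c_1+\cdots+c_d$ the fibre of $\rho$ is $\prod_i\mathbb{P}^{k-1}$; indeed for $d=1$ the moduli space is a $\mathbb{P}^{k-1}$-bundle over $C$ and one computes directly that $L^{[k]}$ is a sum of $k$ copies of $L_C\otimes\oO(1)$, so $\int_P e(L^{[k]})=\int(\alpha+h)^k=ka=\binom{ka}{1}$. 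For general $d$ the fibrewise picture is the same: $L^{[n]}$ is assembled from the rank-$k$ tautological pieces $H^0(\oO_{\mathbb{P}^1}(k-1))\otimes L_C|_{c_i}$, each contributing a factor $(\alpha_i+h_i)^k$ whose integral along the corresponding $\mathbb{P}^{k-1}$ extracts the term linear in $\alpha_i$ with coefficient $k$. The plan is to integrate out the fibre directions and show $\rho_\ast e(L^{[n]})=c_d(M^{[d]})$, where $M^{[d]}$ is the rank-$d$ tautological bundle on $\Sym^d C$ attached to a line bundle of degree $ka$ (the factor $k$ per point being exactly this fibre integral).

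I would then conclude with the classical symmetric-product identity: for a degree-$e$ line bundle $N$ on a smooth projective curve, with tautological bundle $N^{[d]}$ on $\Sym^d C$, one has $\sum_{d\geqslant 0}\big(\int_{\Sym^d C}c_d(N^{[d]})\big)y^d=(1+y)^{e}$. Applying this with $e=ka$ gives $\sum_{d}\big(\int_P e(L^{[n]})\big)y^d=(1+y)^{ka}$, as claimed. An equivalent and perhaps cleaner route, which also explains the independence of the genus of $C$, is a localized-Euler-class computation: the section $\tau$ of $L^{[n]}$ obtained by multiplying the universal section $\oO\to\mathbb{F}$ by $\pi_X^\ast\sigma_C$ and pushing forward (with $\sigma_C\in H^0(L_C)$ cutting out $Z=\{p_1,\dots,p_a\}$) vanishes exactly where $\Supp(F)$ lies over $Z$; localizing $c_n(L^{[n]})$ to this locus decomposes the integral into contributions sitting over the $a$ points, each carrying generating series $(1+y)^k$, whose product is $(1+y)^{ka}$.

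The main obstacle I anticipate is the second step: pinning down the structure of $P$ and of $L^{[n]}$ over the \emph{non-reduced} strata of $\Sym^d C$ (where naive symmetric products of the fibre would be singular, yet $P$ remains smooth) and proving that the fibre integration genuinely produces the rank-$d$, degree-$ka$ tautological bundle on $\Sym^d C$. In the localized-Euler-class formulation the difficulty is the purely local computation on $\oO_{\mathbb{P}^1}(-1,-1)$ showing that a single point of $Z$ contributes $(1+y)^k$, together with the bookkeeping that the global invariant factors as a product over the $a$ points. Everything else---smoothness, the rank and $H^1$-vanishing of $L^{[n]}$, the reduction to dependence on $a$, and the final combinatorial assembly---is either cited from the stated lemmas or routine.
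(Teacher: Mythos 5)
Your preliminary reductions (GRR to replace $L_C$ by $\oO_C(p_1+\cdots+p_a)$ with distinct points, $H^1$-vanishing making $L^{[n]}$ a rank-$n$ bundle) and your second, ``localized Euler class'' route coincide with the paper's strategy: the paper cuts $P:=P^{\mathrm{JS}}_n(X,d)$ by the tautological section $\sigma$ of $L^{[n]}$ from Lemma \ref{lem on tat sect}, whose zero locus is the locus of pairs supported over the chosen points. But the step you defer as ``the purely local computation on $\oO_{\mathbb{P}^1}(-1,-1)$ showing that a single point of $Z$ contributes $(1+y)^k$'' is not bookkeeping; it is the entire mathematical content of the theorem, and nothing in your outline determines it. The section $\sigma$ is maximally non-transverse: its zero locus is
\begin{align*}
Z(\sigma)=\coprod_{\begin{subarray}{c} d_1+\cdots+d_a=d \end{subarray}}\ \prod_{i=1}^{a}P^{\mathrm{JS}}_{kd_i}(D_i,d_i)\ \cong\ \coprod_{\begin{subarray}{c} d_1+\cdots+d_a=d \end{subarray}}\ \prod_{i=1}^{a}\mathrm{Gr}(d_i,k),
\end{align*}
which has dimension $\sum_i d_i(k-d_i)>0$ in general, while the expected dimension is $\dim P-\rank L^{[n]}=0$. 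By excess intersection the integral equals $\int_{Z(\sigma)}e\bigl(\mathrm{Coker}(d\sigma)\bigr)$, so everything hinges on identifying the cokernel of $d\sigma\colon T_P|_{Z(\sigma)}\to L^{[n]}|_{Z(\sigma)}$; a priori the answer could be any characteristic number of these Grassmannians. The paper's key claim \eqref{claim on coker} is that $\mathrm{Coker}(d\sigma)\cong T_{Z(\sigma)}$, proved by applying $\RHom_D(-,F)$ and $\RHom_D(-,F\boxtimes\oO_C(p)|_p)$ to the adjunction triangle $F\boxtimes\oO_C(-p)|_p\to\dL i^{\ast}I_X\to I_D$ and using the vanishings $\Ext^1_D(I_D,F)=\Ext^2_D(F,F)=0$. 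Only then does the integral become $\chi(Z(\sigma))=\sum_{d_1+\cdots+d_a=d}\prod_i\binom{k}{d_i}$, the coefficient of $y^d$ in $(1+y)^{ka}$; the per-point factor $(1+y)^k=\sum_{d_i}\chi(\mathrm{Gr}(d_i,k))\,y^{d_i}$ is a topological Euler characteristic precisely because the excess bundle is the tangent bundle, which you never establish.

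Your first route has the same gap in different clothing. Since $e(L^{[n]})$ is a top-degree class on the $n$-dimensional $P$ and $\dim\Sym^d C=d$, the asserted identity $\rho_{\ast}e(L^{[n]})=c_d(M^{[d]})$ is an identity of top-degree classes, hence equivalent to the numerical statement you are trying to prove, and it cannot be obtained by ``integrating out the fibre directions'' over the locus of reduced cycles: the pushforward of a top-degree class is a global quantity, not computable from the generic fibre (restricting to a dense open set kills all the information, and the non-reduced strata, where the fibres of $\rho$ are no longer $(\mathbb{P}^{k-1})^d$ but involve Grassmannians and pairs on non-reduced $Q$, genuinely contribute). Your $d=1$ computation is correct, but it works only because there $P$ is literally a $\mathbb{P}^{k-1}$-bundle over $C$ with $L^{[k]}$ an explicit sum of line bundles, a structure that fails for $d\geqslant 2$. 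So both routes, as written, stall at the same missing ingredient, which in the paper is the identification of the excess bundle with $T_{Z(\sigma)}$ in the proof of Theorem \ref{thm on integ for any C}.
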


An immediate consequence is: 
\begin{thm}\emph{(Theorem \ref{verify conj in JS for local P1 times E})}
Let $X=\oO_{\mathbb{P}^1}(-1,-1)\times E$ for an elliptic curve $E$ and $\pi: X\to E$ be the projection. 
Then Conjecture \ref{intro main conj} holds in JS chamber. 
\end{thm}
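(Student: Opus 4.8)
The plan is to obtain the statement as an essentially immediate consequence of Theorem \ref{thm on integ for any C} (applied with $C=E$), after reducing the right-hand side of Conjecture \ref{main conj} to its pure genus-zero part.

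First I would pin down the geometry. Writing $H=p_1+\cdots+p_a$ as a reduced divisor of degree $a=\deg H$ on $E$, the divisor $D=\pi^{-1}(H)$ is the disjoint union $\bigsqcup_{j=1}^{a}F_j$ of $a$ copies $F_j\cong\oO_{\mathbb{P}^1}(-1,-1)$ of the resolved conifold, and $L=\oO_X(D)=\pi^{\ast}\oO_E(H)=\pi^{\ast}L_E$ with $\deg L_E=a$. The fiber classes are the multiples of $[\mathbb{P}^1]$, and the genus-zero Gopakumar--Vafa invariants of the resolved conifold equal $1$ in the primitive class and $0$ otherwise; since the $F_j$ are disjoint and $i_{\ast}[\mathbb{P}^1_j]=[\mathbb{P}^1]$ for every $j$, the first factor of the conjectural product becomes $\bigl(\prod_{1\le k\le[t\ell]}(1-(-q)^k y)^{k}\bigr)^{a}$, where $\ell=\omega\cdot[\mathbb{P}^1]$ and $y=y^{[\mathbb{P}^1]}$. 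I would then argue that the genus-one factor is trivial: a genus-one stable map to $X$ in a fiber class is constant onto $E$, hence factors through a single conifold fiber, and the translation action of $E$ contributes a trivial rank-one summand to the (reduced) obstruction, forcing $n_{1,\beta}(X)=0$ for all $\beta$ with $\pi_{\ast}\beta=0$; thus the MacMahon factor is $1$ and the right-hand side equals $\bigl(\prod_{1\le k\le[t\ell]}(1-(-q)^k y)^{k}\bigr)^{a}$.

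For the left-hand side, note that $L\cdot\beta=0$ on fiber classes, so $L^{[n]}$ has rank $n$; by Lemma \ref{lem on smoothness of JS moduli} the space $P^{\mathrm{JS}}_n(X,d)$ is smooth projective of dimension $n$, so its $\mathrm{DT}_4$ virtual class is, up to the chosen orientation, the fundamental class and $P^{\mathrm{JS}}_{n,d}(L)=\pm\int_{P^{\mathrm{JS}}_n(X,d)}e(L^{[n]})$. Specializing Theorem \ref{thm on integ for any C} to $C=E$ and $L_C=L_E$ gives, for every fixed level $k\ge1$,
\[
\sum_{d\ge0,\;n=kd}\int_{P^{\mathrm{JS}}_{n}(X,d)}e(L^{[n]})\,y^{d}=(1+y)^{ka},
\qquad\text{i.e.}\qquad
\int_{P^{\mathrm{JS}}_{kd}(X,d)}e(L^{[kd]})=\binom{ka}{d}.
\]
Organizing the series by the slope level $k=n/d$ (the JS chamber of the class $(kd,d)$ being $t=k/\ell+0^{+}$), and fixing the orientation so that $P^{\mathrm{JS}}_{kd,d}(L)=(-1)^{(k+1)d}\binom{ka}{d}$, the level-$k$ contribution to the generating series is $\sum_{d}(-1)^{(k+1)d}\binom{ka}{d}q^{kd}y^{d}=(1-(-q)^{k}y)^{ka}$, which is precisely the level-$k$ factor of the reduced right-hand side.

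It remains to assemble the levels. A $Z_t$-stable pair in a fiber class should split along its slope filtration into pieces of levels $1\le k\le[t\ell]$, each governed by the JS computation above, so that the full generating series is the product over these levels of the factors $(1-(-q)^{k}y)^{ka}$, with the cutoff $[t\ell]$ matching the one on the right-hand side. I expect the main obstacle to be exactly this assembly together with the sign: one must fix the orientation realizing the factor $(-1)^{(k+1)d}$ (the ``certain choice of orientation'' in Conjecture \ref{main conj}) and justify that the fixed-$t$ moduli factorize over slope levels---equivalently, control the wall-crossing between consecutive JS chambers---so that the additive, level-by-level output of Theorem \ref{thm on integ for any C} assembles into the multiplicative right-hand side; by comparison the vanishing $n_{1,\beta}(X)=0$ is a minor point.
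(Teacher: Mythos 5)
Your core computation coincides with the paper's own proof: reduce the right-hand side of Conjecture \ref{main conj} using $n_{0,1}=1$, $n_{0,d>1}=0$ for $\oO_{\mathbb{P}^1}(-1,-1)$ and $n_{1,\beta}(X)=0$ on fiber classes (for which the paper simply cites \cite[Lemma 2.16]{CMT2}; your translation-action sketch is the right idea but a citation is safer, since $n_{1,\beta}$ also involves $n_{0,\beta}(c_2(X))$ and the meeting invariants), identify $[P^{\mathrm{JS}}_n(X,d)]^{\mathrm{vir}}$ with the fundamental class via Lemma \ref{lem on smoothness of JS moduli} (Corollary \ref{cor on JS vir clas}), feed in $\int_{P^{\mathrm{JS}}_{kd}(X,d)}e(L^{[kd]})=\binom{ka}{d}$ from Theorem \ref{thm on integ for any C} with $C=E$, and twist by the orientation sign $(-1)^{n+d}=(-1)^{(k+1)d}$ to turn $(1+y)^{ka}$ into $(1-(-q)^{k}y)^{ka}$.

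However, the ``assembly of levels'' that you designate as the main remaining obstacle is not part of what is being claimed, so there is no gap there to fill. ``Conjecture \ref{main conj} holds in JS chamber'' is a class-by-class assertion: for the class $(n,\beta)=(kd,d[\mathbb{P}^1])$ one evaluates both sides at the JS chamber of that very class, namely $t=k/\ell+0^{+}$ with $\ell=\omega\cdot[\mathbb{P}^1]$, where the cutoff is $[t\ell]=k$, and one compares the coefficients of $q^{kd}y^{d}$. On the right-hand side this coefficient is read off from $\bigl(\prod_{k'=1}^{k}(1-(-q)^{k'}y)^{k'}\bigr)^{a}$, and only the factor $k'=k$ can contribute: a monomial $q^{\sum_{k'}k'm_{k'}}y^{\sum_{k'}m_{k'}}$ with all $k'\leqslant k$ satisfies $\sum_{k'}k'm_{k'}\leqslant k\sum_{k'}m_{k'}$, with equality forcing $m_{k'}=0$ for every $k'<k$. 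Hence the level-$k$ identity you already proved, $\sum_{d}P^{\mathrm{JS}}_{kd,d}(L)\,q^{kd}y^{d}=(1-(-q)^{k}y)^{ka}$, is precisely the JS-chamber statement. Classes of different slopes are compared at different values of $t$, i.e.\ in different chambers, so they never have to be combined into a single fixed-$t$ generating series; the multiplicative formula at one fixed very general $t$, which would indeed require the (currently unavailable) wall-crossing input you mention, is the general Conjecture \ref{main conj} and is not what this theorem asserts. This per-class reading is also the one used throughout the paper, e.g.\ in Propositions \ref{prop on JS inv ell fib} and \ref{prop on PT inv ell fib}.
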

Another important application as mentioned above is on local resolved conifold $X=\oO_{\mathbb{P}^1}(-1,-1,0)$. As a toric CY 4-fold, there exists a CY torus $T$-action on $X$, where $T \subset (\mathbb{C}^*)^4$ is the subtorus preserving 
the CY 4-form on $X$:
\begin{align*}
\label{repara T}T=\{(t_0,t_1,t_2,t_3)\in(\mathbb{C}^*)^4:\,t_0t_1t_2t_3=1\}.
\end{align*}
The $T$-action on $X$ 
lifts to an action on $P^{\mathrm{JS}}_n(X,d)$ with finitely many reduced points as torus fixed loci. 
Therefore we can define equivariant tautological invariants (Definition \ref{defi of equi taut inv}):
$$P^{\mathrm{JS}}_{n,d}(e^m):=\sum_{\begin{subarray}{c}I=(\oO_X\to F) \in P^{\mathrm{JS}}_n(X,d)^{T} \end{subarray}}
(-1)^d\,e(\chi_X(I,I)^{\frac{1}{2}}_0+\chi_X(F)\otimes e^m)\in \frac{\mathbb{Q}(\lambda_0,\lambda_1,\lambda_2,\lambda_3,m)}{(\lambda_0+\lambda_1+\lambda_2+\lambda_3)}. $$
Here we consider a trivial $\mathbb{C}^*$-action on moduli spaces and $e^m$ is a trivial line bundle with $\mathbb{C}^*$-equivariant weight $m$, the 
equivariant Euler class is taken with respect to $T\times \mathbb{C}^*$-action. $\lambda_i=e_{T}(t_i)$'s are equivariant parameters of $T$ and 
we make an explicit choice of square root in the above definition (see \eqref{eq on choice of sqr root}). 
Again it is enough to consider the case 
$n=kd$ for $k\geqslant 1$, as 
$P_n^{\mathrm{JS}}(X, d)=\emptyset$ otherwise. 

We can classify all $T$-fixed loci and explicitly compute the invariants, though the expression involves messy combinatorics, unlike the neat formula in Conjecture \ref{intro main conj} (see Proposition \ref{JS taut invs}).
Nevertheless, we are able to relate equivariant invariants to global invariants on $\oO_{\mathbb{P}^1}(-1,-1)\times \mathbb{P}^1$ by Atiyah-Bott localization
and solve the combinatorics by using Theorem \ref{intro thm on integ for any C}:
\begin{thm}\label{intro calc of JS equi inv}\emph{(Theorem \ref{calc of JS equi inv})}
Let $X=\oO_{\mathbb{P}^1}(-1,-1,0)$ and $-\lambda_3$ be the equivariant parameter of $\oO_{\mathbb{P}^1}(0)$. Then for any fixed $k\geqslant 1$, 
we have an identity of generating series
\begin{align*}\sum_{d\geqslant 0}&P^{\mathrm{JS}}_{kd, d}(e^m)y^d=(1-y)^{k\cdot\frac{m}{-\lambda_3}}. \end{align*}
\end{thm}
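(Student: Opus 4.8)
The plan is to \emph{not} compute the equivariant invariants directly, but to read them off from the global computation of Theorem \ref{thm on integ for any C} applied to the (non Calabi--Yau) compactification $\bX:=\oO_{\mathbb{P}^1}(-1,-1)\times\mathbb{P}^1$, using Atiyah--Bott localization with respect to the $\mathbb{C}^{\ast}$ that scales the compactifying $\mathbb{P}^1$. Fix $k\geqslant 1$, let $\pi\colon\bX\to\mathbb{P}^1$ be the projection and $L=\pi^{\ast}\oO_{\mathbb{P}^1}(a)$ with $a\geqslant 0$. By Lemma \ref{lem on smoothness of JS moduli} the space $P^{\mathrm{JS}}_n(\bX,d)$ is smooth projective of dimension $n$, so each $\int_{P^{\mathrm{JS}}_n(\bX,d)}e(L^{[n]})$ is an honest (non-equivariant) number, and Theorem \ref{thm on integ for any C} gives
\begin{align*}
\sum_{d\geqslant 0,\,n=kd}\int_{P^{\mathrm{JS}}_n(\bX,d)}e(L^{[n]})\,y^d=(1+y)^{ka}.
\end{align*}
Since this is a plain number, I would recompute it by the scaling $\mathbb{C}^{\ast}$, whose equivariant parameter $s$ I identify with $-\lambda_3$ (the weight of the $\oO_{\mathbb{P}^1}(0)$-direction), fixed points $0,\infty$; I choose the linearization of $L$ so that its weight at $\infty$ is $0$ and its weight at $0$ is $w_0=as$.

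\textbf{Fixed loci and factorization.} The class $d$ lives in the conifold fibre $\mathbb{P}^1$, which is contracted by $\pi$, so every $\mathbb{C}^{\ast}$-fixed pair is supported over $\{0,\infty\}$ and splits as $F=F_0\oplus F_\infty$, $s=(s_0,s_\infty)$. Hence the fixed locus is a disjoint union of products $P^{\mathrm{JS}}_{n_0}(X,d_0)\times P^{\mathrm{JS}}_{n_\infty}(X,d_\infty)$, where near each fixed point the local model is the resolved conifold $X=\oO_{\mathbb{P}^1}(-1,-1,0)$. The tautological complex, the section and the virtual normal bundle all split between the two points, so the generating series factorizes as $Z_0(y)\,Z_\infty(y)=(1+y)^{ka}$. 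I would then identify each local factor with the equivariant invariant of Definition \ref{defi of equi taut inv}: further localizing the (compact, smooth) $P^{\mathrm{JS}}_{n_i}(X,d_i)$ by the residual conifold torus $(\mathbb{C}^{\ast})^{3}$ reduces it to the $T$-fixed points, and the combined contribution reassembles $e_T(\chi_X(I,I)^{\frac{1}{2}}_0+\chi_X(F)\otimes e^{w_i})$, since $\dR\pi_{\ast}(\mathbb{F}_i\otimes L|_i)=\chi_X(F)\otimes e^{w_i}$ with $L|_i=e^{w_i}$. Thus $Z_i(y)=\sum_{d_i}(-1)^{d_i}P^{\mathrm{JS}}_{kd_i,d_i}(e^{w_i})\,y^{d_i}$, the sign $(-1)^{d_i}$ being exactly the discrepancy between the bare localization contribution and the normalization in Definition \ref{defi of equi taut inv}.

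\textbf{Weight-zero vanishing and solving.} The point of choosing $w_\infty=0$ is that the canonical section $\oO_X\to F$ endows $\chi_X(F)$ with a trivial ($T$-weight $0$) summand, so $e_T(\chi_X(F)\otimes e^{w_\infty})$ is divisible by $w_\infty$ and vanishes at $w_\infty=0$ whenever $d_\infty>0$; hence $Z_\infty(y)=1$ and $Z_0(y)=(1+y)^{ka}$. (Note this also forces the surviving term to have $d_0=d$, $n_0=kd$, so the constraint $n=kd$ is automatically reproduced.) Reading off coefficients with $w_0=as$, i.e. $m/(-\lambda_3)=a$, gives
\begin{align*}
(-1)^d\,P^{\mathrm{JS}}_{kd,d}(e^{as})\big|_{-\lambda_3=s}=\binom{ka}{d}\qquad\text{for all integers } a\geqslant 0.
\end{align*}
For each fixed $d$ this equates a rational function of $m=w_0$ (and of $\lambda_0,\lambda_1,\lambda_2,s$) with the degree-$d$ polynomial $\binom{k(m/s)}{d}$ at the infinitely many points $m=0,s,2s,\dots$; an interpolation argument then forces them to agree identically, which in particular proves the a priori non-obvious independence of the invariant from $\lambda_0,\lambda_1,\lambda_2$. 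Summing the binomial series yields
\begin{align*}
\sum_{d\geqslant 0}P^{\mathrm{JS}}_{kd,d}(e^m)\,y^d=\sum_{d\geqslant 0}\binom{k\,m/(-\lambda_3)}{d}(-y)^d=(1-y)^{k\,m/(-\lambda_3)}.
\end{align*}

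\textbf{Main obstacle.} The crux is the compatibility asserted in the second paragraph: matching the bare $\mathbb{C}^{\ast}$-localization contribution on the smooth projective $P^{\mathrm{JS}}_n(\bX,d)$, whose virtual class comes from the \emph{ordinary} pair obstruction theory since $\bX$ is not Calabi--Yau, with the square-root Euler class $e_T(\chi_X(I,I)^{\frac{1}{2}}_0+\cdots)$ of the $\mathrm{DT}_4$ theory on the local CY $4$-fold $X$. I expect the delicate part to be showing that the $\mathbb{C}^{\ast}$-moving part of the global obstruction theory, together with the conifold-direction obstructions, reorganizes into the self-dual complex $\chi_X(I,I)_0$ and its chosen square root, and that the orientations together with the signs $(-1)^{d}$ are globally consistent so that the series carries the $+$ sign of $(1+y)^{ka}$. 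Once this is settled, the remaining steps are only weight bookkeeping and the elementary interpolation above.
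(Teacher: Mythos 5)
Your strategy is the same as the paper's proof of Theorem \ref{calc of JS equi inv}: apply Theorem \ref{thm on integ for any C} to $\overline{X}=\oO_{\mathbb{P}^1}(-1,-1)\times\mathbb{P}^1$, localize so that the fixed loci split over the two poles of the base into products of fixed loci on the local model $X=\oO_{\mathbb{P}^1}(-1,-1,0)$, linearize $L=\pi^{*}\oO_{\mathbb{P}^1}(aW_0)$ with weight $0$ at $\infty$, kill the $\infty$-factor using the weight-zero summand of $H^0(F_\infty)$ (Lemma \ref{T-fixed JS pairs}), and then pass from $m=-a\lambda_3$, $a\in\mathbb{Z}_{\geqslant 0}$, to all $m$. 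Your interpolation step is an equivalent (indeed slightly cleaner) packaging of the paper's Lemma \ref{lem on matrix}, which does the same thing with a Vandermonde argument; both versions rest on the fact, visible from Definition \ref{defi of equi taut inv} and Proposition \ref{JS taut invs}, that $P^{\mathrm{JS}}_{n,d}(e^m)$ is a polynomial in $m$ of degree at most $n$ whose coefficients are rational functions of the equivariant parameters.

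The genuine gap is precisely the step you flag as the ``main obstacle'' and then do not carry out: identifying the bare Atiyah--Bott contribution $e_T(H^0(F\otimes L))/e_T\bigl(T_{P^{\mathrm{JS}}_n(\overline{X},d)}|_{I}\bigr)$ at a fixed point with the summand $e_T\bigl(\chi_X(I,I)^{\frac{1}{2}}_0+\chi_X(F)\otimes e^{m}\bigr)$ appearing in Definition \ref{defi of equi taut inv}. Moreover you frame this step in a misleading way: since $P^{\mathrm{JS}}_n(\overline{X},d)$ is smooth projective by Lemma \ref{lem on smoothness of JS moduli}, there is no virtual class, no self-dual obstruction theory and no orientation to reconcile on the global side, and the equivariant invariants are defined combinatorially with an explicit square root \eqref{eq on choice of sqr root}; so nothing needs to ``reorganize into the self-dual complex''. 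What is actually needed is the $K^{T}(pt)$-identity
\begin{align*}
\chi_X(I_X,I_X)_0^{\frac{1}{2}}=-\,T_{P^{\mathrm{JS}}_n(X,d)}\big|_{I_X},
\end{align*}
i.e.\ $-\chi_X(i_*F)+\chi_Y(F,F)=-\Hom_Y(I_Y,F)$ for $Y=\oO_{\mathbb{P}^1}(-1,0)$. The paper proves this in a few lines: every JS stable pair on $X$ is scheme-theoretically supported on $Y$, and applying $\RHom_Y(-,F)$ to the triangle $I_Y\to\oO_Y\to F$ gives the exact sequence \eqref{equ ne2}; combining it with $H^1(F)=0$ and $\Ext^2_Y(F,F)=0$ yields the identity. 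Without this lemma (or a substitute) your factor $Z_0(y)$ has not been shown to be the generating series of the invariants of Definition \ref{defi of equi taut inv}, so the argument is incomplete as written --- although the missing piece is shorter and more elementary than your closing paragraph anticipates.
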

By taking certain limit of $m$, the above equivariant tautological invariants recover equivariant invariants without insertions, which allows us to 
prove one of our previous conjectures:
\begin{cor}\emph{(Corollary \ref{cor on inv no insert})}
We have 
\begin{align*}
\sum_{\begin{subarray}{c}I \in P^{\mathrm{JS}}_n(X,d)^{T} \end{subarray}}e_{T}(\chi_X(I,I)^{\frac{1}{2}}_0)=
\left\{\begin{array}{rcl}\frac{1}{d\,!\,\lambda_3^d} \quad         &\mathrm{if} \,\,n=d, \\
& \\ 0   \, \, \,\, \,  \quad      & \,\,  \mathrm{otherwise}, 
\end{array} \right. 
\end{align*}
i.e. Conjecture 6.10 of \cite{CT1} holds. 
\end{cor}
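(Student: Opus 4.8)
The plan is to deduce this statement directly from Theorem \ref{calc of JS equi inv} by an asymptotic analysis in the weight variable $m$, exploiting that the $T$-fixed locus $P^{\mathrm{JS}}_n(X,d)^T$ consists of finitely many reduced points (so every expression is a finite sum and the $m\to\infty$ limit commutes with the summation over fixed points). The point is that the no-insertion invariant is the ``top-weight'' part of the tautological invariant $P^{\mathrm{JS}}_{kd,d}(e^m)$, which Theorem \ref{calc of JS equi inv} has already computed in closed form.

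First I would unwind the insertion. By multiplicativity of the equivariant Euler class, writing $F$ for the sheaf of the pair $I=(\oO_X\to F)$,
\[
P^{\mathrm{JS}}_{kd,d}(e^m)=(-1)^{kd}\sum_{I\in P^{\mathrm{JS}}_{kd}(X,d)^T}e_T\big(\chi_X(I,I)^{\frac{1}{2}}_0\big)\cdot e_T\big(\chi_X(F)\otimes e^m\big).
\]
Since the virtual rank of $\chi_X(F)$ equals $\chi(F)=kd$, the factor $e_T(\chi_X(F)\otimes e^m)$ is asymptotic to $m^{kd}$ with leading coefficient $1$ as $m\to\infty$. Hence
\[
\lim_{m\to\infty}m^{-kd}\,P^{\mathrm{JS}}_{kd,d}(e^m)=(-1)^{kd}\sum_{I\in P^{\mathrm{JS}}_{kd}(X,d)^T}e_T\big(\chi_X(I,I)^{\frac{1}{2}}_0\big),
\]
which is, up to the fixed orientation, exactly the no-insertion invariant to be computed.

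Next I would read off the same limit from the closed formula. By Theorem \ref{calc of JS equi inv} the coefficient of $y^d$ in $(1-y)^{km/(-\lambda_3)}$ equals $P^{\mathrm{JS}}_{kd,d}(e^m)=(-1)^d\binom{km/(-\lambda_3)}{d}$, a polynomial in $m$ of degree $d$. If $k=1$, then $kd=d$, so the normalized limit $m^{-d}(\cdots)$ extracts the leading coefficient $\tfrac{1}{d!\,\lambda_3^d}$; comparing with the previous display gives the value $\tfrac{1}{d!\,\lambda_3^d}$ in the case $n=d$ (the sign being fixed by the orientation convention below). If $k\geqslant2$, then the $y^d$-coefficient has $m$-degree $d<kd$, so $\lim_{m\to\infty}m^{-kd}(\cdots)=0$, which forces $\sum_I e_T(\chi_X(I,I)^{\frac{1}{2}}_0)=0$. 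Finally, if $d\nmid n$ then $P^{\mathrm{JS}}_n(X,d)=\emptyset$ and the sum is empty. Assembling the three cases gives precisely the claimed dichotomy, i.e. Conjecture 6.10 of \cite{CT1}.

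The limiting argument itself is short, so the main obstacle is the bookkeeping of signs rather than any new geometric input. One must confirm that $\rk \chi_X(F)=kd$, so that the leading power of $m$ is exactly $m^{kd}$ with unit coefficient; and one must check that the explicit square-root and orientation choices already fixed in the definition of $P^{\mathrm{JS}}_{n,d}(e^m)$ absorb the factor $(-1)^{d}$, so that the leading coefficient of $(-1)^d\binom{-m/\lambda_3}{d}$ reproduces the positive value $\tfrac{1}{d!\,\lambda_3^d}$ rather than its sign-twisted counterpart. Granting these conventions, which are in force throughout Theorem \ref{calc of JS equi inv}, the degree mismatch makes the vanishing for $k\geqslant2$ automatic.
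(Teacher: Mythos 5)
Your proposal is correct and is essentially the paper's own proof: the paper's ``insertion-free limit'' (substituting $Q=qm$ and letting $m\to\infty$ with $Q$ fixed) is exactly your normalized limit $\lim_{m\to\infty} m^{-n}P^{\mathrm{JS}}_{n,d}(e^m)$ packaged as a generating series, with the $n=d$ value coming from the leading coefficient of $(-1)^d\binom{-m/\lambda_3}{d}$, the $n=kd$, $k\geqslant 2$ vanishing coming from the degree drop $d<kd$ (which is what makes the paper's limit equal $1$ in those cases), and the $d\nmid n$ case coming from the empty fixed locus. One remark: the prefactor in Definition \ref{defi of equi taut inv} is $(-1)^d$, not $(-1)^{kd}$ as you wrote, and the sign discrepancy you flag is genuine --- but the paper's own proof treats it exactly the same way (it silently drops this $(-1)^d$ when expanding the definition inside the limit), so your argument is on equal footing with the paper's.
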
 

\subsection{Acknowledgement}
Y. C. is grateful to Martijn Kool and Sergej Monavari for previous collaboration \cite{CKM1} without which the current paper would not exist. 
We thank the referee for very careful reading of our paper and providing many helpful comments which improves the exposition of this paper. 
This work is partially supported by the World Premier International Research Center Initiative (WPI), MEXT, Japan.
Y. C. is partially supported by RIKEN Interdisciplinary Theoretical and Mathematical Sciences
Program (iTHEMS), JSPS KAKENHI Grant Number JP19K23397 and Royal Society Newton International Fellowships Alumni 2019 and 2020.
Y. T. is partially supported by Grant-in Aid for Scientific Research grant (No. 26287002) from MEXT, Japan.

\section{Definitions and conjectures}

\subsection{GV type invariants of CY 4-folds}\label{section on GV/GW}
Let $X$ be a Calabi-Yau 4-fold.
The genus 0 Gromov-Witten invariants on $X$ are defined using
insertions: for $\gamma \in H^{4}(X, \mathbb{Z})$,
one defines
\begin{equation}
\mathrm{GW}_{0, \beta}(\gamma)
=\int_{[\overline{M}_{0, 1}(X, \beta)]^{\rm{vir}}}\mathrm{ev}^{\ast}(\gamma)\in\mathbb{Q},
\nonumber \end{equation}
where $\mathrm{ev} \colon \overline{M}_{0, 1}(X, \beta)\to X$
is the evaluation map.

The \textit{genus 0 Gopakumar-Vafa type invariants} 
\begin{align}\label{g=0 GV}
n_{0, \beta}(\gamma) \in \mathbb{Q}
\end{align}
are defined by Klemm-Pandharipande \cite{KP} from the identity
\begin{align*}
\sum_{\beta>0}\mathrm{GW}_{0, \beta}(\gamma)q^{\beta}=
\sum_{\beta>0}n_{0, \beta}(\gamma) \sum_{d=1}^{\infty}
\frac{1}{d^{2}}\,q^{d\beta}.
\end{align*}
For the genus 1 case, virtual dimensions of moduli spaces of stable maps are zero, so Gromov-Witten invariants
\begin{align*}
\mathrm{GW}_{1, \beta}=\int_{[\overline{M}_{1, 0}(X, \beta)]^{\rm{vir}}}
1 \in \mathbb{Q}
\end{align*}
can be defined
without insertions.
The \textit{genus 1 Gopakumar-Vafa type invariants}
\begin{align}\label{g=1 GV}
n_{1, \beta} \in \mathbb{Q}
\end{align}
 are defined in~\cite{KP} by the identity
\begin{align*}
\sum_{\beta>0}
\mathrm{GW}_{1, \beta}q^{\beta}=
&\sum_{\beta>0} n_{1, \beta} \sum_{d=1}^{\infty}
\frac{\sigma(d)}{d}q^{d\beta}
+\frac{1}{24}\sum_{\beta>0} n_{0, \beta}(c_2(X))\log(1-q^{\beta}) \\
&-\frac{1}{24}\sum_{\beta_1, \beta_2}m_{\beta_1, \beta_2}
\log(1-q^{\beta_1+\beta_2}),
\end{align*}
where $\sigma(d)=\sum_{i|d}i$ and $m_{\beta_1, \beta_2}\in\mathbb{Z}$ are called meeting invariants defined in~\cite[Section~0.3]{KP}. 

In~\cite{KP}, both of the invariants (\ref{g=0 GV}), (\ref{g=1 GV})
are conjectured to be integers, and GW invariants are computed to support the conjectures in many examples.
Note that the genus zero integrality conjecture has been proved 
by Ionel-Parker \cite[Theorem 9.2]{IP} using symplectic geometry.

\subsection{$Z_t$-stable pairs} 

Let $\omega$ be an ample divisor on $X$ and $t\in\mathbb{R}$, we recall the following notion of $Z_t$-stable pairs.
\begin{defi}\label{def Zt sta}\emph{(\cite[Lemma 1.7]{CT1})}
Let $F$ be a one dimensional coherent sheaf and $s: \oO_X\to F$ be a section. For an ample divisor $\omega$, we denote the slope function
by $\mu(F)=\chi(F)/(\omega \cdot [F])$.

We say $(F,s)$ is a $Z_t$-(semi)stable pair $($$t\in\mathbb{R}$$)$ if 
\begin{enumerate}
\renewcommand{\labelenumi}{(\roman{enumi})}
\item for any subsheaf $0\neq F' \subset F$, we have 
$\mu(F')<(\leqslant)t$,
\item for any
subsheaf $ F' \subsetneq F$ 
such that $s$ factors through $F'$, 
we have 
$\mu(F/F')>(\geqslant)t$. 
\end{enumerate}
\end{defi}
There are two distinguished stability conditions appearing as 
$Z_t$-stability in some chambers. 
\begin{defi}\label{defi:PTJSpair}\emph{(\cite{PT}, \cite[Definition 1.10]{CT1})} 

(i) A pair $(F,s)$ is a PT stable pair if
$F$ is a pure one dimensional sheaf and $s$ is surjective in dimension one. 

(ii) A pair $(F,s)$ is a JS stable pair if $s$ is a non-zero morphism, $F$ is $\mu$-semistable and 
for any subsheaf $0\neq F' \subsetneq F$ such that $s$ factors through 
$F'$ we have $\mu(F')<\mu(F)$. 
\end{defi}
\begin{prop}\label{prop:chambers}\emph{(\cite[Proposition 1.11]{CT1})} 
For a pair $(F,s)$ with $[F]=\beta$ and $\chi(F)=n$, its

(i) $Z_t$-stability with $t\to \infty$ is exactly PT stability, 

(ii) $Z_t$-stability with $t=\frac{n}{\omega\cdot \beta}+0^+$ is exactly JS stability. 
\end{prop}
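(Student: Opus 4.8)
The plan is to analyze the two defining inequalities of $Z_t$-stability (Definition \ref{def Zt sta}) separately in each limit, using only the additivity of $\chi(-)$ and of $\omega\cdot[-]$ in short exact sequences, together with the basic observation that a nonzero zero-dimensional sheaf $T$ has $\omega\cdot[T]=0$ and hence $\mu(T)=+\infty$. This single fact is what makes the degenerate cases behave well.

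For (i), I would first fix the numerical type $([F],\chi(F))=(\beta,n)$ and invoke boundedness of the family of such sheaves to extract a uniform upper bound $\mu_{\max}$ on the slopes of all one-dimensional subsheaves, so that a single chamber $t>\mu_{\max}$ works simultaneously for all pairs of that type. I then claim that for such $t$, $Z_t$-stability is equivalent to PT stability. Condition (i) requires $\mu(F')<t$ for every $0\neq F'\subset F$: for a one-dimensional $F'$ this is automatic once $t>\mu_{\max}$, while a nonzero zero-dimensional $F'$ has $\mu(F')=+\infty$ and is therefore forbidden, so (i) for large $t$ is precisely the purity of $F$. For condition (ii) the binding constraint is the smallest admissible subsheaf $F'=\im(s)$: if $\Cok(s)=F/\im(s)$ had positive-dimensional support its slope would be finite and $\mu(F/\im(s))>t$ would fail for large $t$; whereas if $\Cok(s)$ is zero-dimensional, then $F/F'$ is zero-dimensional for every $\im(s)\subseteq F'\subsetneq F$, giving $\mu(F/F')=+\infty>t$. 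Thus (ii) for large $t$ is exactly surjectivity of $s$ in dimension one, and together with purity this is PT stability.

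For (ii), I would set $\mu_0:=\mu(F)=n/(\omega\cdot\beta)$ and read $t=\mu_0+0^+$ as $t\in(\mu_0,\mu_0+\varepsilon)$ with $\varepsilon$ small relative to the finitely many rational slopes that can occur. Condition (i) then reads $\mu(F')\leq\mu_0$ for all $0\neq F'\subset F$, which is precisely $\mu$-semistability of $F$ (and forces purity). Condition (ii) reads $\mu(F/F')>\mu_0$ for every $F'\subsetneq F$ through which $s$ factors; note $s\neq 0$ is automatic, since $s=0$ would make $F'=0$ admissible and force $\mu(F)>t$, which is false. Hence such $F'$ satisfy $\im(s)\subseteq F'$, so $F'\neq 0$. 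Writing $a=\omega\cdot[F']$ and $b=\omega\cdot[F/F']$, additivity gives $\mu_0(a+b)=\mu(F')\,a+\mu(F/F')\,b$, i.e. $(\mu(F')-\mu_0)\,a=-(\mu(F/F')-\mu_0)\,b$. When $a,b>0$ this is the see-saw equivalence $\mu(F/F')>\mu_0\iff\mu(F')<\mu_0$; in the degenerate case $b=0$ (zero-dimensional quotient) one has $\mu(F/F')=+\infty>\mu_0$ and simultaneously $\mu(F')<\mu_0$, so the equivalence persists. Thus condition (ii) is exactly the JS slope condition $\mu(F')<\mu(F)$, and combining with (i) gives JS stability.

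The routine parts are the additivity identities and the bookkeeping of infinite slopes; the part demanding care is making the two limits precise. For (i) this means extracting the uniform bound $\mu_{\max}$ from boundedness so that one chamber $t>\mu_{\max}$ works across the whole numerical type; for (ii) it means checking that no slope of a sub- or quotient sheaf lands strictly between $\mu_0$ and $t$, which is where the genericity of $t$ and the rationality of slopes are used. I expect the see-saw equivalence, together with the consistent treatment of zero-dimensional subsheaves and quotients (slope $+\infty$), to be the conceptual heart of the argument, while the limit and chamber bookkeeping is the main technical obstacle.
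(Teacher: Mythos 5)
A preliminary remark: this paper does not prove Proposition \ref{prop:chambers}; it quotes it from \cite[Proposition 1.11]{CT1}, so your argument must stand on its own. Your part (ii) essentially does: the see-saw identity, the convention $\mu(T)=+\infty$ for a nonzero zero-dimensional sheaf $T$ (which makes condition (i) of Definition \ref{def Zt sta} encode purity and semistability), and the observation that all relevant slopes have denominator at most $\omega\cdot\beta$ (so no slope can lie strictly between $\mu_0$ and $\mu_0+\varepsilon$ for small $\varepsilon$) are exactly the right ingredients, and your treatment of the degenerate cases $F'=0$ and $\omega\cdot[F/F']=0$ is correct.

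Part (i), however, contains a genuine gap, and it sits exactly where you placed the technical weight. The boundedness you invoke is false: the family of one-dimensional sheaves with $[F]=\beta$, $\chi(F)=n$ is \emph{not} bounded, and no uniform $\mu_{\max}$ exists over it --- already for $F_m=\oO_C(mp)\oplus\oO_{C'}(D_m)$, with $C,C'$ curves in a reducible class $\beta$ and $\deg D_m$ chosen so that $\chi(F_m)=n$, the maximal subsheaf slope grows linearly in $m$. For the implication (PT stable $\Rightarrow$ $Z_t$-stable for all large $t$) the correct input is boundedness of the family of PT stable pairs (projectivity of $P_n(X,\beta)$), not of all sheaves of the given type. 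More seriously, for the converse implication your argument ``if $\Cok(s)$ had one-dimensional support, $\mu(F/\im(s))>t$ would fail for large $t$'' is a per-pair statement: the slope of $\Cok(s)$ is not a priori bounded over all pairs of type $(\beta,n)$, so this does not produce a single $t_0$ such that every $Z_t$-stable pair with $t>t_0$ is PT stable --- which is what the chamber assertion means. The missing input is a uniform lower bound $\chi(\oO_Z)\geqslant -C$, with $C$ depending only on $(X,\omega,\beta)$, for structure sheaves of \emph{pure} one-dimensional subschemes $Z\subset X$ with $[Z]\leqslant\beta$ (note $\im(s)=\oO_Z$ is indeed pure, being a subsheaf of the pure sheaf $F$ by condition (i)); equivalently, an upper bound on the arithmetic genus of such subschemes, which follows from Grothendieck's boundedness lemma. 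Granting it, if $F/\im(s)$ had a nonzero pure one-dimensional quotient $Q'$, then condition (ii) applied to $F'=\Ker(F\to Q')$ (through which $s$ factors, and with $F'/\oO_Z$ zero-dimensional) gives $t<\mu(Q')\leqslant\chi(Q')=n-\chi(F')\leqslant n+C$, so taking $t_0=n+C$ together with the PT-side bound closes the argument; without it, the uniformity claimed in (i) is unproved.
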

Let $\beta \in H_2(X, \mathbb{Z})$ and $n\in \mathbb{Z}$, we denote 
$$P^t_n(X, \beta)\subseteq \pP^t_n(X, \beta) $$
to be the moduli stack of $Z_t$-stable (semistable) pairs $(F,s)$ with $[F]=\beta$ and $\chi(F)=n$.

By Proposition \ref{prop:chambers}, there are two disinguished moduli spaces, 
PT moduli spaces and JS moduli spaces, 
by specializing $t\to \infty$ and $t=\frac{n}{\omega\cdot \beta}+0^+$ respectively:
\begin{align*}
P_n(X, \beta) \cneq P_n^{t\to \infty}(X, \beta), \quad
P_n^{\mathrm{JS}}(X, \beta) \cneq 
P_n^{t=\frac{n}{\omega\cdot \beta}+0^+}(X, \beta). 
\end{align*}

As $Z_t$-stable pairs are special cases of Le Potier's stable coherent systems \cite{LePotier} whose moduli spaces can be constructed by GIT, so 
$P^t_n(X, \beta)$ is a quasi-projective scheme, and $\pP^t_n(X, \beta)$ admits a good moduli space
\begin{align*}
\pP^t_n(X, \beta) \to \overline{P}_n^t(X, \beta),
\end{align*}
where $\overline{P}_n^t(X, \beta)$ is a projective 
scheme which parametrizes $Z_t$-polystable objects.
The following result shows moduli stacks of $Z_t$-stable pairs are indeed open substacks of moduli stacks of objects in the derived categories of coherent sheaves.
\begin{thm}\label{existence of proj moduli space}\emph{(\cite[Theorem 0.1]{CT1})} 
$P^t_n(X, \beta)$ admits an open immersion 
$$P^t_n(X, \beta)\to \mathcal{M}_0, \quad (F,s)\mapsto (\oO_X\stackrel{s}{\to} F) $$
to the moduli stack $\mathcal{M}_0 $ of $E\in D^b\Coh (X)$ with $\Ext^{<0}(E,E)=0$ and $\det(E)\cong \oO_X$.

Therefore for a general choice $($i.e. outside a finite subset of rational numbers in $\mathbb{R}$$)$\,of $t$, $P^t_n(X, \beta)$ is a projective scheme which has 
a virtual class
\begin{align}\label{pair vir class}
[P^t_n(X, \beta)]^{\rm{vir}}\in H_{2n}\big(P^t_n(X, \beta),\mathbb{Z}\big), 
\end{align}
in the sense of Borisov-Joyce \cite{BJ}. The virtual 
class depends on the choice of orientation of certain (real) line bundle over $P^t_n(X, \beta)$ \cite{CGJ, CL2}. 
\end{thm}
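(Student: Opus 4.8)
The plan is to prove both assertions through the derived-category reinterpretation of a pair, which is what makes the $\mathrm{CY}_4$ machinery applicable. First I would send a $Z_t$-stable pair $(F,s)$ to the two-term complex $I^\bullet=\{\oO_X\xrightarrow{s}F\}$ in degrees $0,1$, viewed in $D^b\Coh(X)$, and check that $I^\bullet$ lands in $\mathcal{M}_0$. The determinant condition is immediate: since $F$ is one dimensional it is supported in $\codim\geqslant 3$, so $\det F\cong\oO_X$ and hence $\det I^\bullet\cong\det\oO_X\otimes(\det F)^{-1}\cong\oO_X$. For the vanishing $\Ext^{<0}(I^\bullet,I^\bullet)=0$ I would use the triangle $I^\bullet\to\oO_X\xrightarrow{s}F\to I^\bullet[1]$, apply $\RHom(-,I^\bullet)$ and $\RHom(F,-)$, and feed in three elementary facts: $\Ext^{<0}(\oO_X,\oO_X)=0$, the sheaf vanishing $\Ext^{<0}(F,F)=0$, and $\Ext^i(F,\oO_X)=0$ for $i\leqslant 2$, the last via Serre duality $\Ext^i(F,\oO_X)\cong H^{4-i}(F)^\ast$ together with $\dim\Supp F=1$. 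Chasing the two long exact sequences, for $j\leqslant -1$ one finds $\Ext^j(\oO_X,I^\bullet)=\mathbb{H}^j(I^\bullet)=0$ and $\Ext^{j+1}(F,I^\bullet)=0$, which forces $\Ext^j(I^\bullet,I^\bullet)=0$.

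Next I would upgrade $(F,s)\mapsto I^\bullet$ to a morphism of moduli stacks $P^t_n(X,\beta)\to\mathcal{M}_0$ and show it is an open immersion. Full faithfulness splits into two points: the pair is recovered from $I^\bullet$ by taking cohomology sheaves and the canonical section $\oO_X\to I^\bullet$, and a $Z_t$-stable pair has no nontrivial automorphisms (an endomorphism of $F$ fixing $s$ is the identity by stability), matching $\Hom(I^\bullet,I^\bullet)=\mathbb{C}$. For openness I would characterize the objects $E\in\mathcal{M}_0$ in the image as those admitting a presentation $\{\oO_X\to F\}$ with $F$ one dimensional and $Z_t$-stable, and argue this is an open condition using boundedness of the family together with semicontinuity of $\Hom$ and $\Ext$ and the openness of $Z_t$-stability in families. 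Finally, comparing deformation theories, the infinitesimal deformations of $I^\bullet$ in $\mathcal{M}_0$ are governed by $\Ext^1(I^\bullet,I^\bullet)_0$, which one identifies with the pair deformation space; this identification makes the morphism \'etale onto its image and completes the open immersion.

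For the second assertion I would first settle projectivity. For general $t$ there are no strictly $Z_t$-semistable pairs, so $P^t_n(X,\beta)=\pP^t_n(X,\beta)$ and the good moduli space map is an isomorphism onto the projective scheme $\overline{P}^t_n(X,\beta)$ produced by Le Potier's GIT construction; since a stable pair carries only scalar automorphisms that are rigidified by the section, $P^t_n(X,\beta)$ is a projective scheme. The virtual class is then obtained by transporting the Borisov--Joyce construction through the open immersion: on $\mathcal{M}_0$ the complex $\RHom(I^\bullet,I^\bullet)_0$ enjoys the $\mathrm{CY}_4$ Serre duality $\Ext^i(I^\bullet,I^\bullet)_0\cong\Ext^{4-i}(I^\bullet,I^\bullet)_0^\ast$, so $\Ext^2(I^\bullet,I^\bullet)_0$ carries a nondegenerate quadratic form; this is exactly the self-dual datum from which \cite{BJ} build a class of real dimension $\ext^1_0-\tfrac12\ext^2_0$. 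A Riemann--Roch computation identifies this dimension with $2n$, so $[P^t_n(X,\beta)]^{\mathrm{vir}}\in H_{2n}$, the construction depending on an orientation of the real determinant line bundle whose existence is guaranteed by \cite{CGJ, CL2}.

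The main obstacle is the openness of the image in $\mathcal{M}_0$, together with the matching of deformation theories that makes the morphism \'etale. This is the crucial step rather than routine bookkeeping, because the Borisov--Joyce machinery is engineered for objects of $D^b\Coh(X)$ whose self-$\Ext$ groups obey $\mathrm{CY}_4$ Serre duality, and it is precisely the open immersion that transports the self-dual obstruction theory to $P^t_n(X,\beta)$; the pair moduli has no manifest symmetric obstruction theory on its own. Controlling this requires boundedness of the relevant families and a careful verification that $Z_t$-stability, and the presentability of an object as $\{\oO_X\to F\}$, are open conditions on $\mathcal{M}_0$.
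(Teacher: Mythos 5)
The paper itself contains no proof of this statement: it is imported wholesale from \cite[Theorem 0.1]{CT1}, so there is no in-paper argument to compare yours against line by line. That said, your plan follows the same route as the cited proof: realize a pair as the complex $I^{\bullet}=\{\oO_X\to F\}$, check $\det I^{\bullet}\cong\oO_X$ and $\Ext^{<0}(I^{\bullet},I^{\bullet})=0$ (your diagram chase here is correct, including the key vanishing $\Ext^{i}(F,\oO_X)\cong H^{4-i}(F)^{\ast}=0$ for $i\leqslant 2$), compare the pair deformation theory with the fixed-determinant deformation theory of $I^{\bullet}$ via the triangles $\RHom(F,F)\to\RHom(\oO_X,F)\to\RHom(I^{\bullet},F)$ and $\RHom(I^{\bullet},F)\to\RHom(I^{\bullet},I^{\bullet})_0[1]\to\RHom(F,\oO_X)[2]$ (the same triangles this paper uses later in the proof of Lemma \ref{lem on PT vir class ell fib}), deduce projectivity from Le Potier's GIT \cite{LePotier} once a general choice of $t$ forces semistability to coincide with stability, and invoke Borisov--Joyce \cite{BJ} together with the orientability results of \cite{CGJ, CL2}.

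Two concrete problems in your write-up. First, the automorphism count is wrong as stated: $\Hom(I^{\bullet},I^{\bullet})=\mathbb{C}$ says that the automorphism group of $I^{\bullet}$ in $D^b\Coh(X)$ is $\mathbb{C}^{\ast}$, which does \emph{not} match the trivial automorphism group of a stable pair, and a scheme can never be open in a stack all of whose points carry $\mathbb{G}_m$-stabilizers. What rescues the statement is that $\mathcal{M}_0$ parametrizes objects \emph{together with} a trivialization $\det E\cong\oO_X$: since $I^{\bullet}$ has rank one, a scalar $\lambda$ acts on $\det I^{\bullet}$ by $\lambda$, so compatibility with the trivialization kills exactly the scalar automorphisms. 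Your argument must use this rigidification explicitly; it is where the determinant condition in the definition of $\mathcal{M}_0$ earns its keep. Second, a dimension slip: the Borisov--Joyce class has \emph{real} dimension $2\ext^1_0-\ext^2_0=-\chi(I^{\bullet},I^{\bullet})_0$, which Riemann--Roch evaluates to $2n$; the quantity $\ext^1_0-\tfrac12\ext^2_0$ that you wrote equals $n$ (the complex virtual dimension), so your sentence identifying it with $2n$ is internally inconsistent, even though the conclusion $[P^t_n(X,\beta)]^{\rm{vir}}\in H_{2n}$ is the right one. Finally, the step you yourself single out as crucial --- openness of the image and the matching of deformation functors --- is asserted via boundedness and semicontinuity but never carried out; note that once the map is known to be an \'etale monomorphism of stacks, openness of the image is automatic, so the deformation-theoretic comparison and your proposed semicontinuity argument are alternative routes to the same point, not two separate requirements, and it is the former that the cited source actually establishes.
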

In \cite{CMT2, CT1}, the authors gave sheaf theoretic interpretations of GV type invariants \eqref{g=0 GV}, \eqref{g=1 GV} using integration of primary insertions on
\eqref{pair vir class}. See also \cite{CMT1, CT2} for related sheaf theoretic interpretations using different moduli spaces.

\subsection{Tautological stable pair invariants} \label{sect on tau sp inv}
The $\DT_4$ tautological invariants are considered in the case of Hilbert schemes of points in \cite{CK1}. We consider their 
extension to moduli spaces of $Z_t$-stable pairs.
For a line bundle $L\in \Pic(X)$, we define its tautological complex:
\begin{align*}
L^{[n]}:=\dR \pi_{P\ast}(\mathbb{F}\otimes \pi_X^{\ast}L)\in 
\mathrm{Perf}(P^t_n(X, \beta)),
\end{align*}
where $\pi_X$, $\pi_P$ are projections from $X \times P^t_n(X,\beta)$
to corresponding factors and 
$$
\mathbb{I}^{\bullet}=\{\oO \to \mathbb{F}\}
$$ 
is the universal $Z_t$-stable pair on $X \times P^t_n(X,\beta)$. 

The tautological $Z_t$-stable pair invariants of $X$ are defined by 
\begin{align}\label{taut inv}
P^t_{n,\beta}(L):=\int_{[P^t_n(X,\beta)]^{\rm{vir}}}e(L^{[n]}).
\end{align}
Here for a $K$-theory class $\eE=A-B$ (where $A,B$ are vector bundles), its Euler class is
$$e(\eE):=\left[\frac{c_{\bullet}(A)}{c_{\bullet}(B)}\right]_{\rk(\eE)}, $$
where $c_{\bullet}$ denotes the total Chern class.

In JS and PT chambers, we write  
$$P_{n,\beta}(L):=P^{t\to \infty}_{n,\beta}(L) , \quad P^{\mathrm{JS}}_{n,\beta}(L):=P^{t=\frac{n}{\omega\cdot \beta}+0^+}_{n,\beta}(L).$$
By the Riemann-Roch formula, we have
$$\rank (L^{[n]})=(n+L\cdot \beta), $$ 
so invariants \eqref{taut inv} vanish unless $L\cdot \beta=0$.

\subsection{Genus 0 GV invariants of 3-folds}\label{sect on GV 3-fold}
Let $D$ be a smooth projective 3-fold which admits a projective morphism 
\begin{align}\label{fib of D}\pi: D\to H \end{align}
to a variety $H$. We assume $K_D=\pi^*\mathcal{L}$ is the pullback of a line bundle $\mathcal{L}\in \Pic(H)$. 

Let $M_1(D,\beta)$ be the coarse moduli scheme of one dimensional 
stable sheaves $E$ on $D$ with $[E]=\beta$ and $\chi(E)=1$. For curve classes in fibers of $\pi$, we can construct a virtual class.
\begin{prop}\label{prop on vir class on D}
If $\pi_*\beta=0$, then there exists a virtual class 
$$[M_1(D,\beta)]^{\mathrm{vir}}\in A_0(M_1(D,\beta),\mathbb{Z}). $$
in the sense of Behrend-Fantechi and Li-Tian \cite{BF, LT}.
\end{prop}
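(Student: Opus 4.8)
The plan is to construct a two-term perfect obstruction theory on $M:=M_1(D,\beta)$ of virtual dimension $0$ and then invoke \cite{BF, LT}. Since $\chi(E)=1$, Gieseker stability and semistability coincide, so $M$ is a projective scheme and carries a universal sheaf $\mathbb{E}$ on $D\times M$ (if no strictly universal sheaf exists one twists by a line bundle pulled back from $M$, which is harmless for what follows). Writing $\pi_M$ for the projection to $M$, the Atiyah class of $\mathbb{E}$ produces in the standard way a canonical obstruction theory $\phi\colon E^{\bullet}\to \bL_M$ whose dual $(E^{\bullet})^{\vee}$ is extracted from $\dR\pi_{M\ast}\dR\HOM(\mathbb{E},\mathbb{E})$ and has fibre over $[E]$ given by $\Ext^1(E,E)$ in degree $0$ and $\Ext^2(E,E)$ in degree $1$. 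First I would recall that $\phi$ is an obstruction theory; the content of the proposition is then that it is perfect of amplitude $[-1,0]$ and of virtual dimension $0$.

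Next I would reduce perfectness to two fibrewise statements. The complex $\dR\HOM(\mathbb{E},\mathbb{E})$ has cohomology in degrees $0,1,2,3$ with fibres $\Ext^i(E,E)$, and to truncate it to the two middle terms it suffices that the outer groups be as small as possible, namely $\hom(E,E)=1$ and $\ext^3(E,E)=1$, so that after discarding the canonical rank-one pieces in degrees $0$ and $3$ (the scalars $\id_E$ and, by Serre duality, its dual) only $\Ext^1$ and $\Ext^2$ survive. The equality $\hom(E,E)=1$ is immediate from stability. For the virtual dimension, a Riemann–Roch computation on the threefold $D$ gives $\chi(E,E)=\int_D \ch(E)^{\vee}\ch(E)\,\td(T_D)=0$, since $\ch(E)^{\vee}\ch(E)$ lies in cohomological degree $\geqslant 8$; combined with $\hom(E,E)=\ext^3(E,E)=1$ this yields $\ext^1(E,E)=\ext^2(E,E)$, so the truncated theory has virtual dimension $0$.

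The heart of the argument is therefore the identity $\ext^3(E,E)=1$, and this is exactly where the hypothesis $\pi_{\ast}\beta=0$ enters. By Serre duality on $D$ one has $\Ext^3(E,E)\cong \Hom(E,E\otimes K_D)^{\vee}=\Hom(E,E\otimes \pi^{\ast}\mathcal{L})^{\vee}$. Now $\beta=[\Supp E]$ is an effective $1$-cycle class with $\pi_{\ast}\beta=0$, so pairing with an ample class on $H$ shows every component of $\Supp E$ is contracted by $\pi$; hence $\pi(\Supp E)$ is a finite subset of $H$, and one may choose an open $U\supseteq \pi(\Supp E)$ with $\mathcal{L}|_U\cong \oO_U$. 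Then $K_D|_{\pi^{-1}(U)}\cong \oO$, so $E\otimes \pi^{\ast}\mathcal{L}\cong E$ and $\Hom(E,E\otimes \pi^{\ast}\mathcal{L})\cong \Hom(E,E)=\mathbb{C}$ by stability, giving $\ext^3(E,E)=1$ as required.

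Finally, with $\phi$ shown to be a perfect obstruction theory of amplitude $[-1,0]$ and virtual dimension $0$, the machinery of \cite{BF, LT} produces $[M_1(D,\beta)]^{\mathrm{vir}}\in A_0(M_1(D,\beta),\mathbb{Z})$. The main obstacle I anticipate is not the pointwise identity above but its globalisation: one must perform the removal of the degree $0$ and degree $3$ line bundles uniformly over $M$ (the rank-zero sheaf case, where the naive trace splitting degenerates), and ensure that the identification $E\otimes \pi^{\ast}\mathcal{L}\cong E$ behaves well in families even when $\Supp E$ is non-reduced or spread over varying fibres of $\pi$. Since perfectness and amplitude of $E^{\bullet}$ can be tested after derived restriction to closed points, these family-theoretic issues reduce to the fibrewise computation carried out above.
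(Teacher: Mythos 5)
Your proposal is correct and takes essentially the same approach as the paper: Serre duality combined with $K_D=\pi^{\ast}\mathcal{L}$ and $\pi_{\ast}\beta=0$ yields $\Ext^3(E,E)\cong \Hom(E,E)^{\vee}=\mathbb{C}$, the standard obstruction theory is then truncated to a two-term perfect one, and $\chi(E,E)=0$ from Riemann--Roch gives virtual dimension zero. The only difference is cosmetic: where you trivialize $\mathcal{L}$ on an open neighbourhood of the finite set $\pi(\Supp E)$, the paper instead invokes \cite[Lemma 2.2]{CMT1} to conclude that $E$ is scheme-theoretically supported on a single fiber of $\pi$ (stability forces connected support), on which $\pi^{\ast}\mathcal{L}$ is automatically trivial.
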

\begin{proof}
For curve class $\beta$ such that $\pi_*\beta=0$, any $[E]\in M_1(D,\beta)$ is scheme theoretically supported on a fiber of $\pi$ \eqref{fib of D} (ref. \cite[Lemma 2.2]{CMT1}).
Therefore, we have  
$$E\otimes K_D\cong E\otimes \pi^*\mathcal{L}\cong E.$$
By Serre duality, we obtain 
$$\Ext^3(E,E)\cong \Hom(E,E\otimes K_D)^{\vee}\cong \Hom(E,E)^{\vee}=\mathbb{C}. $$
Therefore one can truncate the obstruction theory and define the virtual class.
Since $\chi(E, E)=0$ by the Riemann-Roch theorem, the 
virtual dimension is zero. 
\end{proof}
Using Proposition \ref{prop on vir class on D}, we can define genus zero Gopakumar-Vafa invariants of $D$:
\begin{align}\label{def of g=0 inv on D}n_{0,\beta}(D):=\int_{[M_1(D,\beta)]^{\mathrm{vir}}}1\in \mathbb{Z}, \quad \mathrm{if}\,\, \pi_*\beta=0. \end{align}
When $D$ is a CY 3-fold, we can take \eqref{fib of D} to be the constant map to a point. Our definition of genus zero GV invariants then 
reduces to the definition of Katz \cite{Katz}.

\subsection{Conjecture}
The following is our main conjecture of this paper which expresses tautological $Z_t$-stable pair invariants in terms of 
genus zero GV invariants of the associated 3-folds and genus one GV type invariants of CY 4-folds.

\begin{conj}\label{main conj} 
	Let $(X,\omega)$ be a polarized CY 4-fold
	with a projective surjective morphism 
	\begin{align*}
		\pi \colon X \to B
	\end{align*}
	to a variety $B$. 
	Let $i \colon D \hookrightarrow X$
	be a smooth divisor of the form $D=\pi^{\ast}H$ for a 
	Cartier divisor $H$ on $B$, 
	and $L=\oO_X(D)$ be the associated line bundle. 
	Fix a very general $t\in \mathbb{R}_{>0}$. 
	Then for certain choice of orientation, we have 
	\begin{align}\notag
		\sum_{n,\pi_*\beta=0}P^t_{n,\beta}(L)\,q^ny^{\beta} 
		=\prod_{\begin{subarray}{c}\beta\in H_2(D,\mathbb{Z}) \\ \pi_*\beta=0   \end{subarray}}
		\prod_{\begin{subarray}{c}1\leqslant k\leqslant [t(\omega\cdot\beta)]  \end{subarray}}(1-(-q)^ky^{i_*\beta})^{k\cdot n_{0,\beta}(D)}
		\prod_{\begin{subarray}{c}\beta \in H_2(X, \mathbb{Z}) \\
				\pi_{\ast}\beta=0
		\end{subarray}}M(y^{\beta})^{n_{1, \beta}(X)}. 
	\end{align}
	Here $n_{0,\beta}(D)$ are genus zero GV invariants of $D$ \eqref{def of g=0 inv on D}, 
	$n_{1, \beta}(X)$ are genus one GV type invariants of $X$ \eqref{g=1 GV}
	and 
	$M(x):=\prod_{k\geqslant 1}(1-x^k)^{-k}$ denotes the MacMahon function.
\end{conj}
The formulation of the 
identity in this conjecture restricts to curve classes in fibers of $\pi$, 
so that genus zero GV invariants $n_{0, \beta}(D)$ are 
defined from moduli spaces of stable sheaves on $D$.
In most examples, curve classes on fibers are the same as curve classes which satisfy $L\cdot \beta=0$.
In contrast to $n_{0,\beta}(D)$, genus one GV type invariants $n_{1,\beta}(X)$ of $X$ in the conjectural formula are defined from Gromov-Witten theory 
(see Section \ref{section on GV/GW}).


\subsection{Heuristic argument in ideal geometry}\label{sect on ideal case}
In this section, modulo the issue of choosing orientations, we justify Conjecture \ref{main conj} in the following ideal case. 

Let $X$ be an `ideal' CY 4-fold
in the sense that all curves of $X$ deform in families of expected dimensions, and have expected generic properties, i.e.
\begin{enumerate}
\item
any rational curve in $X$ is a chain of smooth $\mathbb{P}^1$ with normal bundle $\mathcal{O}_{\mathbb{P}^{1}}(-1,-1,0)$, and
moves in a compact 1-dimensional smooth family of embedded rational curves, whose general member is smooth with 
normal bundle $\mathcal{O}_{\mathbb{P}^{1}}(-1,-1,0)$. 
\item
any elliptic curve $E$ in $X$ is smooth, super-rigid, i.e. 
the normal bundle is 
$L_1 \oplus L_2 \oplus L_3$
for general degree zero line bundle $L_i$ on $E$
satisfying $L_1 \otimes L_2 \otimes L_3=\oO_E$. 
Furthermore any two elliptic curves are 
disjoint and also disjoint with rational curve families.
\item
there is no curve in $X$ with genus $g\geqslant 2$.
\end{enumerate}
In this ideal case, geometric objects in $X$ should be in general positions and one does not expect $X$ to have a fibration structure. 
We justify Conjecture \ref{main conj} in a more general situation, where $D\subset X$ is a general smooth divisor and we consider all curve classes $\beta$ which satisfy $L\cdot \beta=0$. 

We first consider the contribution of rational curve families to tautological invariants.
We only need to consider curve classes $\beta$ such that $L\cdot \beta=0$. 
\begin{lem}
For a $Z_t$-stable pair $(F,s)$ supported on rational curve families, we have 
$$H^1(F\otimes L)=H^1(F)=0. $$
Therefore, the tautological complex $L^{[n]}$ is a vector bundle. 
\end{lem}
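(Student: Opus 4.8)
The plan is to establish the pointwise vanishing $H^1(X,F\otimes L)=H^1(X,F)=0$ for every $Z_t$-stable pair $(F,s)$ whose support lies in the rational curve families, and then read off that $L^{[n]}$ is a vector bundle by cohomology and base change. Indeed, since $F$ is one dimensional we automatically have $H^i(X,F\otimes L)=0$ for $i\geqslant 2$, so once the $H^1$ vanishes for all pairs the complex $\dR\pi_{P\ast}(\mathbb{F}\otimes\pi_X^\ast L)$ is concentrated in degree zero with constant rank $\chi(F\otimes L)=n+L\cdot\beta=n$, hence locally free. Thus the whole lemma reduces to the two cohomology vanishings.

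First I would pin down the structure of $F$. Condition (i) of $Z_t$-stability forbids any zero-dimensional subsheaf, whose slope would be $+\infty>t$, so $F$ is pure of dimension one; by hypothesis its support is assembled from the smooth rational curves with normal bundle $\oO_{\mathbb{P}^1}(-1,-1,0)$ of the families, possibly thickened in the moving direction or spread over several members. Let $\oO_C=\im(s)$ be the scheme-theoretic image of the section, the structure sheaf of a curve $C$ supported on these rational curves, and set $Q=\Cok(s)$, so that purity yields the short exact sequence $0\to\oO_C\to F\to Q\to 0$ with $Q$ again supported on the same rational curves.

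Next I would prove $H^1(X,F)=0$. The arithmetic genus of a (connected) tree of rational curves is zero, giving $H^1(\oO_C)=0$, so the long exact sequence reduces the claim to $H^1(Q)=0$. For PT-type pairs $Q$ is zero dimensional and this is immediate; for general $t$, condition (ii) applied to $F'=\oO_C$ forces $\mu(Q)>t>0$, and because $Q$ is supported on genus-zero curves I would convert this slope positivity—component by component, using the family structure—into nonnegativity of the degree of $Q$ on each $\mathbb{P}^1$, whence $H^1(Q)=0$. This slope-to-degree translation on a possibly reducible or nonreduced rational support is the step I expect to be the main obstacle: the slope is only an average, so one must rule out a component carrying negative degree, and here I would exploit that in the ideal geometry the family class is the class of a smooth irreducible $\mathbb{P}^1$, which tightly constrains the components and thickenings that can occur.

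Finally, for the twist by $L$, the point is that $L$ restricts trivially to the support. Since $L\cdot\beta=0$ and $D\in|L|$ is general, a generic smooth irreducible member of each contributing family is disjoint from $D$, so $\deg L|_{\mathbb{P}^1}=0$ on it and hence on every member of that family; a degree-zero line bundle on $\mathbb{P}^1$ is trivial, so $L$ restricts to $\oO$ on each component of $C$. Therefore $F\otimes L\cong F$ as sheaves on $X$, giving $H^1(X,F\otimes L)=H^1(X,F)=0$. Combined with the first paragraph, this yields that $L^{[n]}$ is a vector bundle.
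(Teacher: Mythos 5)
Your reduction of the lemma to the pointwise vanishings $H^1(F\otimes L)=H^1(F)=0$, and your observation that $L$ has degree zero on every member of the contributing families (so that twisting by $L$ costs nothing), are both consistent with what the paper does. The problem is the core step, and you have named it yourself: from the exact sequence $0\to\oO_C\to F\to Q\to 0$ with $\oO_C=\mathrm{Im}(s)$, the $Z_t$-stability conditions only give you slope inequalities such as $\mu(Q)>t$, and a slope is an average of $\chi$ against degree over the whole (possibly reducible, possibly non-reduced) support. Nothing in this average prevents $Q$ from having a constituent of very negative degree on one rational curve compensated by positive degree elsewhere, and in that situation $H^1(Q)\neq 0$. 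Your proposed fix (``the family class is the class of a smooth irreducible $\mathbb{P}^1$, which tightly constrains the components'') is not an argument: even for an irreducible class, $Q$ itself need not be pure or semistable, and $\mathrm{Cok}(s)$ carries no stability property of its own. The same issue infects your claim $H^1(\oO_C)=0$: the image of $s$ can be a non-reduced curve, and for a ribbon the vanishing of $H^1$ of the structure sheaf depends on the degree of $I_{\mathbb{P}^1}/I_C$, which again requires an argument rather than the genus-zero slogan for reduced trees.

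The paper's proof avoids this averaging problem by choosing a decomposition of $F$ that is compatible with stability: the Harder--Narasimhan/Jordan--H\"older filtration $0=F_0\subset F_1\subset\cdots\subset F_l=F$. From $1\leqslant\mu(\mathrm{Im}(s))<t$ and $\mu(F/F_{l-1})>t$ one deduces that \emph{every} stable factor $F_i/F_{i-1}$ has positive slope (the slopes are non-increasing and the minimal one already exceeds $t>1$). Now stability is exactly what kills the averaging issue: in the ideal geometry each stable factor is scheme-theoretically supported on a single rational curve with normal bundle $\oO_{\mathbb{P}^1}(-1,-1,0)$, hence is a line bundle $\oO_{\mathbb{P}^1}(k)$, and positive slope means $k+1>0$, i.e. $k\geqslant 0$, so $H^1$ of each factor vanishes; the vanishing for $F$ follows by devissage along the filtration. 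In short, the missing idea in your proposal is to filter $F$ by stable pieces rather than by the image/cokernel of the section, so that the slope bounds apply to sheaves on irreducible curves where ``positive slope'' genuinely equals ``nonnegative degree.''
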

\begin{proof}
By the $Z_t$-stability, $\oO_Z:=\mathrm{Im}(s)$ is a non-zero subsheaf of $F$. If $\oO_Z\neq F$, we have 
$$1\leqslant \mu(\oO_Z)<t, \quad \frac{\chi(F)-\chi(\oO_Z)}{d(F)-d(\oO_Z)}>t, $$
where $d(F)$ denotes the degree of the curve class $[F]$.
This implies 
$$ 1\leqslant \mu(\oO_Z)< \mu(F). $$
Consider the Harder-Narasimhan/Jordan-H\"older filtration: 
$$0=F_0\subset F_1\subset \cdots \subset F_l=F, $$
where $F_i/F_{i-1}$'s are stable one dimensional sheaves with non-increasing slopes. 

If $l>1$, consider surjection $F\twoheadrightarrow F/F_{l-1}$, then $Z_t$-stability implies 
$$\mu(F/F_{l-1})>t>1.$$
Therefore all $F_i/F_{i-1}$'s have positive slope. They are also stable and scheme theoretically supported on some rational curve, 
so $H^1(F)=0$ by the generic normal bundle assumption.

If $l=1$, then $F$ is stable with positive slope, we also have vanishing $H^1(F)=0$.
The vanishing $H^1(F \otimes L)=0$ also holds similarly. 
\end{proof}
\begin{lem}\label{lem on tat sect}
There exists a tautological section $\sigma$ of vector bundle $L^{[n]}$ whose zero locus satisfies 
\begin{align}\label{id:subscheme}
Z(\sigma)=P^t_{n}(D,\beta), 
\end{align}
as closed subschemes of $P^t_{n}(X,\beta)$.
\end{lem}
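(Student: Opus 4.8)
The plan is to produce $\sigma$ directly from the canonical section of $L=\oO_X(D)$ together with the universal section of the pair, and then to identify its vanishing locus first set-theoretically and then as a closed subscheme. First I would take the defining section $s_D\in H^0(X,L)$ of the divisor $D$, pull it back to $X\times P^t_n(X,\beta)$, and multiply it by the universal section $\oO_{X\times P^t_n(X,\beta)}\to \mathbb{F}$; this produces a morphism $\oO\to \mathbb{F}\otimes\pi_X^\ast L$, i.e.\ a global section of $\mathbb{F}\otimes\pi_X^\ast L$. By the preceding lemma one has $H^1(F\otimes L)=0$ for every $Z_t$-stable pair under consideration, so cohomology and base change give $R^{\geqslant 1}\pi_{P\ast}(\mathbb{F}\otimes\pi_X^\ast L)=0$; hence $L^{[n]}=\dR\pi_{P\ast}(\mathbb{F}\otimes\pi_X^\ast L)$ is a vector bundle that commutes with base change, and the global section above corresponds to the desired $\sigma\in H^0(P^t_n(X,\beta),L^{[n]})$. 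Fiberwise, $\sigma$ sends $(F,s)$ to the image $s_D\cdot s$ of $s\in H^0(F)$ under multiplication $H^0(F)\to H^0(F\otimes L)$.

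Next I would compute $Z(\sigma)$ set-theoretically. Because $L\cdot\beta=0$ and $D$ is general, every rational curve supporting $F$—and hence every connected component of $\Supp F$—is either disjoint from $D$ or contained in $D$ (in the fibered setting of Conjecture \ref{main conj} this holds because $D=\pi^\ast H$ is a union of fibers while $\Supp F$ lies in fibers of $\pi$, as $\pi_\ast\beta=0$). On a component disjoint from $D$ the section $s_D$ is nowhere vanishing, so multiplication by $s_D$ is an isomorphism there and $s_D\cdot s$ is nonzero whenever $s$ is; thus any pair whose support meets the complement of $D$ lies outside $Z(\sigma)$. Conversely, if $F$ is scheme-theoretically supported on $D$, say $F=i_\ast F'$ for a sheaf $F'$ on $D$, then $s_D$ annihilates $F$ and $\sigma(F,s)=0$. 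This identifies $Z(\sigma)$ set-theoretically with the locus of pairs pushed forward from $D$, namely $P^t_n(D,\beta)$.

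Finally I would upgrade this to an equality of closed subschemes by a functorial argument. The inclusion $P^t_n(D,\beta)\hookrightarrow Z(\sigma)$ is immediate: over $P^t_n(D,\beta)$ the universal sheaf is $i_\ast\mathbb{F}'$, on which $s_D$ acts as zero, so $\sigma$ restricts to $0$. For the reverse inclusion I would show that over $Z(\sigma)$ the universal section factors through the $\oO_D$-submodule $\ker\bigl(\mathbb{F}\xrightarrow{\,s_D\,}\mathbb{F}\otimes\pi_X^\ast L\bigr)$, and then use $Z_t$-stability—namely that $\Cok(s)$ is zero-dimensional, together with purity and the generic normal bundle $\oO_{\mathbb{P}^1}(-1,-1,0)$—to conclude that the whole family $\mathbb{F}$, not merely the image of the section, is the pushforward of a flat family on $D$. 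This exhibits $Z(\sigma)$ as a family of $Z_t$-stable pairs on $D$ and factors its inclusion through $P^t_n(D,\beta)$. The main obstacle is exactly this last step: passing from the vanishing $s_D\cdot s=0$, which a priori only constrains the image of the section, to scheme-theoretic support of all of $F$ on $D$—that is, ruling out normal thickenings or embedded points off $D$ and verifying the relevant base change so that $Z(\sigma)$ represents the expected functor. The disjointness dichotomy forced by $L\cdot\beta=0$ is what makes this controllable.
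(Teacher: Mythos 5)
Your construction of $\sigma$ (push forward $q^{\ast}s_D\circ\psi$ along $\pi_P$, using $H^1$-vanishing so that $L^{[n]}$ is a bundle commuting with base change) and your reduction to a functor-of-points statement are exactly the paper's. The genuine gap is in the step you yourself flag as the main obstacle: passing from $s_D\cdot s=0$ to $s_D\cdot F=0$. You propose to close it using ``$Z_t$-stability---namely that $\Cok(s)$ is zero-dimensional, together with purity and the generic normal bundle $\oO_{\mathbb{P}^1}(-1,-1,0)$''. But a zero-dimensional cokernel is a feature of the PT chamber only: for general $t$, and in particular in the JS chamber where this lemma is invoked in the proof of Theorem \ref{thm on integ for any C}, the cokernel of a $Z_t$-stable pair can be one-dimensional, so the argument ``$\mathrm{Im}(s_D)$ is a quotient of the zero-dimensional $\Cok(s)$, hence a torsion subsheaf of the pure sheaf $F\otimes L$, hence zero'' is not available. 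The generic normal bundle plays no role in this lemma at all; invoking it signals that the actual mechanism is missing.

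The paper closes this step by a slope-pinching argument valid in every chamber. Suppose $K:=\Ker(s_D\colon F\to F\otimes L)\subsetneq F$. Since $s_D\cdot s=0$, the section $s$ factors through $K$, so condition (ii) of $Z_t$-stability gives $\mu(F/K)=\mu(\mathrm{Im}(s_D))>t$. On the other hand, $\mathrm{Im}(s_D)\otimes L^{-1}$ is a nonzero subsheaf of $F$, so condition (i) gives $\mu(\mathrm{Im}(s_D)\otimes L^{-1})<t$. Since the support of $\mathrm{Im}(s_D)$ lies in fibers of $\pi$ (so its class has trivial intersection with $L$), tensoring by $L^{-1}$ changes neither the class nor the Euler characteristic, hence $\mu(\mathrm{Im}(s_D)\otimes L^{-1})=\mu(\mathrm{Im}(s_D))$ --- a contradiction. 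Therefore $K=F$, i.e.\ $F$ is an $\oO_D$-module, which is the scheme-theoretic support you need, with no appeal to purity, cokernel dimension, or normal bundles; the paper then runs this on the restriction of the universal pair to an arbitrary test scheme $T\to P^t_n(X,\beta)$ to get the equality of closed subschemes. Two smaller points: your set-theoretic dichotomy in the second paragraph is correct in the fibered setting but is subsumed by the slope argument (which uniformly excludes support meeting $X\setminus D$ as well as thickenings off $D$), and your claim that pairs whose support meets $X\setminus D$ lie outside $Z(\sigma)$ tacitly uses that $s$ cannot vanish on a whole connected component of $\Supp F$ --- itself a consequence of combining (i) and (ii) on direct summands, not something you may assume for free.
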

\begin{proof}
Let $\psi: \oO_{P^t_{n}(X,\beta)\times X}\to \mathbb{F}$ be the universal stable pair and consider the diagram
\begin{displaymath}
\xymatrix
{&  \mathbb{F}  \ar_{}[d] & \\
& P^t_{n}(X,\beta)\times X \ar_p[ld] \ar^q[dr]   & \\
P^t_{n}(X,\beta) & & X.
}
\end{displaymath}
Denote $s:\oO_X\to L=\oO_X(D)$ to be the section which defines $D\subset X$. The composition 
$$\oO_{P^t_{n}(X,\beta)\times X}\stackrel{\psi}{\to}  \mathbb{F} \stackrel{q^*s}{\to}  \mathbb{F}\otimes q^*L $$
gives a tautological section $\sigma$ of $L^{[n]}$.
We claim the desired equality (\ref{id:subscheme}) of subschemes 
of $P_n^t(X, \beta)$. 

In order to see this, we take 
a morphism $f \colon T \to P_n^t(X, \beta)$
which corresponds to a pair 
$(\mathbb{F}_T, \psi_T)$
where $\mathbb{F}_T$ is a 
$T$-flat family 
\begin{displaymath}
\xymatrix
{&  \mathbb{F}_T  \ar_{}[d] & \\
& T\times X \ar_{p_T}[ld] \ar^{q_T}[dr]   & \\
T & & X.}
\end{displaymath}
Let $\iota \colon T \times D \hookrightarrow T \times X $
be the inclusion and $\mathbb{F}_{T_D}=\iota^*\mathbb{F}_T$. We need to show $\mathbb{F}_T=\iota_*\mathbb{F}_{T_D}$ 
if and only if the morphism $f \colon
 T \rightarrow P^t_{n}(X,\beta)$ factors through $Z(\sigma)$. 

Now $f$ factors through $Z(\sigma)$ if and only if $f^*\sigma$ is the zero section of $f^*L^{[n]}$.  Note that
$$f^* \sigma = f^*(p_*(q^*s\cdot \psi))=p_{T*}(f^*(q^*s\cdot \psi))=p_{T*}( q_T^{*}s\cdot\psi_T), $$
where $\psi_T \colon \oO\to \mathbb{F}_T$ is the universal pair on $T\times X$.
This is a zero section if and only if for any $t\in T$ which corresponds to $(F,\phi)$, the composition 
$$\oO_X\stackrel{\phi}{\to} F\stackrel{s_F}{\to} F\otimes L$$
is zero (here $s_F$ denotes the multiplication by section $s$). This is equivalent to
 $\phi$ factors through $\Ker(s_F)\subseteq F$. We claim that $\mathrm{Im}(s_F)=0$, otherwise violating the stability of $(F,\phi)$. In fact, if $\Ker(s_F)\neq F$, by $Z_t$-stability,   
$$\mu(\mathrm{Im}(s_F))>t. $$
At the same time, for the nonzero subsheaf $\mathrm{Im}(s_F)\otimes L^{-1}\subseteq F$, we have 
$$\mu(\mathrm{Im}(s_F)\otimes L^{-1})<t. $$
As we only consider curve classes in rational curve families whose intersection with $L$ is trivial, so the curve class of $\mathrm{Im}(s_F)$ satisfies 
$[\mathrm{Im}(s_F)]\cdot L=0$. Thus 
$$\mu(\mathrm{Im}(s_F)\otimes L^{-1})=\mu(\mathrm{Im}(s_F)), $$
which gives a contradiction. 
Therefore we have showed $\phi$ factors through $\Ker(s_F)\subseteq F$ if and only if $\mathrm{Im}(s_F)=0$, i.e. $F$ sits in $D$ scheme theoretically. Therefore we are done.
\end{proof}
When the section $\sigma$ is `transverse' to the zero section in the following diagram:
\begin{align*}\xymatrix{
  & L^{[n]} \ar[d]_{\pi}  &  &  \\
 P^t_{n}(D,\beta) \ar[r]^{ \iota} & P^t_{n}(X,\beta),  \ar@/_1pc/[u]_{\sigma} &  &  }
\end{align*}
heuristically, we should have 
\begin{equation}\label{eq integral 1}\int_{[P^t_{n}(X,\beta)]^{\mathrm{vir}}}e(L^{[n]})``="\int_{[P^t_{n}(D,\beta)]_{\mathrm{ind}}^{\mathrm{vir}}}1, \end{equation}
where $[P^t_{n}(D,\beta)]_{\mathrm{ind}}^{\mathrm{vir}}$ is some degree zero cycle in $P^t_{n}(D,\beta)$ 
induced from $P^t_{n}(X,\beta)$.

Generically, the divisor $D$ intersects each rational curve family in a finite number of disjoint smooth rational curves and 
we may ignore intersections of different rational curve families. We only consider curve classes $\beta$ such that $L\cdot \beta=0$.
To evaluate the LHS of \eqref{eq integral 1}, we may reduce to a computation on
$$X_0=\oO_{\mathbb{P}^1}(-1,-1,0), \quad D_0=\oO_{\mathbb{P}^1}(-1,-1)\times \{p\}, $$
where the irreducible curve class in $X_0$ is a class $\beta$ in the ambient compact CY 4-fold $X$.

Though $X_0$ is non-compact, the RHS of \eqref{eq integral 1} still makes sense as the zero locus of the tautological section $\sigma$ is 
the moduli space of stable pairs on $D_0$ which is proper. We apply Calabi-Yau torus $T\cong (\mathbb{C}^*)^3$-localization on $X_0$ and 
arrive at equivariant invariants
\begin{align*}\sum_{\begin{subarray}{c}I=(\oO_{X_0}\to F) \in P^{t(\omega\cdot\beta)}_n(X_0,d\,[\mathbb{P}^1])^{T} \end{subarray}}
e_{T}(\chi_{X_0}(I,I)^{\frac{1}{2}}_0)\cdot e_{T}(\chi_{X_0}(F)\otimes L)&
=\sum_{\begin{subarray}{c}I \in P^{t(\omega\cdot\beta)}_n(D_0,d\,[\mathbb{P}^1])^{T} \end{subarray}}e_{T}(\chi_{D_0}(I,I)_0),
\end{align*}
where $L=\oO_{X_0}(D_0)$.
Here we note that the polarization $\omega$ 
restricts to 
 $\omega|_{X_0}=\oO(\omega\cdot\beta)$ 
 on $X_0$, since its degree on the zero section $\mathbb{P}^1$ is $(\omega\cdot\beta)$. 

By the dimensional reduction result of \cite[Proposition~2.6]{CT3}, \cite{CKM1},
only stable pairs which are scheme theoretically supported on $D$ contribute to the invariants, which explains the above equality.
The generating series of the RHS  
is known by the work of Nagao-Nakajima \cite[Theorem~3.12]{NN}:
$$\sum_{n,d}\sum_{\begin{subarray}{c}I \in P^{t(\omega\cdot\beta)}_n(D_0,d\,[\mathbb{P}^1])^{T} \end{subarray}}e_{T}(\chi_{D_0}(I,I)_0)\,q^ny^d
=\prod_{k=1}^{[t(\omega\cdot\beta)]}(1-(-q)^k y)^{k}, $$
where $[t(\omega\cdot\beta)]$ denotes the largest integer not bigger than $t(\omega\cdot\beta)$.

Now we go back to the global picture. 
Although $D\subset X$ may not be a CY 3-fold, we only consider curve classes $\beta$ in $D$ such that $K_{D}\cdot \beta=0$, so $D$ behaves 
like CY for such curve classes. The intersection number of $D$ with all rational curve families in classes $\beta$ with $L\cdot \beta=0$ 
is exactly the number of $(-1,-1)$ curves (i.e.~smooth rational curves whose normal bundle is $\oO_{\mathbb{P}^1}(-1,-1)$) in $D$ in the ideal situation, i.e. equal to the genus zero GV invariant of $D$. 
To sum up, all rational curve families contribute 
$$\prod_{\begin{subarray}{c} \beta\in H_2(D,\mathbb{Z}) \\ K_D\cdot \beta=0 \end{subarray}}
\prod_{\begin{subarray}{c}1\leqslant k\leqslant [t(\omega\cdot\beta)]  \end{subarray}}
(1-(-q)^ky^{i_*\beta})^{k\cdot n_{0,\beta}(D)}$$
to the generating series of tautological $Z_t$-stable pair invariants in Conjecture \ref{main conj}. 

Next we consider contribution from super-rigid elliptic curves. In this case, it suffices to consider the total space
$$X_0=\mathrm{Tot}_E(L_1\oplus L_2 \oplus L_3)$$ 
of direct sum of three general degree zero line bundles on an elliptic curve $E$ satisfying $L_1 \otimes L_2 \otimes L_3=\oO_E$. As we only need to consider $L$ with $L\cdot \beta=0$, so we may assume $L=\oO$ when restricted to $X_0$. 

When the Euler characteristic is zero, we have
$$P^t_{0,d[E]}(\oO)=\int_{[P^t_0(X_0,d[E])]^{\rm{vir}}}1, $$
whose generating series is computed in \cite[Section 4.1]{CT1}, given by the MacMahon function:
$$M(y):=\prod_{k\geqslant 1}(1-y^k)^{-k}. $$
Summing over all possible super-rigid elliptic curves, they contribute 
$$\prod_{\begin{subarray}{c}\beta\in H_2(X,\mathbb{Z})   \end{subarray}}M(y^\beta)^{n_{1,\beta}(X)}$$
to the generating series of tautological $Z_t$-stable pair invariants (with $L=\oO$) in Conjecture \ref{main conj}. 

When the Euler characteristic $n$ is positive, we claim that
\begin{align}\label{equ on van}P^t_{n,d[E]}(\oO)=0, \quad n>0.\end{align}
This vanishing is obvious if the tautological complex $\oO^{[n]}$ is a bundle, 
as it admits a nowhere vanishing section by the argument in Lemma \ref{lem on tat sect}.
In JS chamber, this indeed happens:
\begin{prop}\label{lem on vec bdl on JS}
Let $X=\mathrm{Tot}_E(L_1\oplus L_2 \oplus L_3)$ be as above and $d,n\in \mathbb{Z}_{\geqslant 1}$. 
Then the tautological complex $\oO^{[n]}$ over $P^{\mathrm{JS}}_n(X,d)$ is a vector bundle. Therefore \eqref{equ on van} holds in JS chamber.
\end{prop}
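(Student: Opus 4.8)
The plan is to reduce the statement to the pointwise vanishing $H^1(X,F)=0$ for every JS stable pair $(F,s)\in P^{\mathrm{JS}}_n(X,d)$, and then to prove this vanishing by pushing $F$ forward to the base elliptic curve. First I would recall that for $L=\oO_X$ the tautological complex is $\oO^{[n]}=\dR\pi_{P\ast}\mathbb{F}$, a perfect complex whose derived fibre at $(F,s)$ is $\dR\Gamma(X,F)$, base change being applicable since $\mathbb{F}$ has proper support over $P^{\mathrm{JS}}_n(X,d)$. As $F$ is one dimensional, $H^{\geqslant 2}(X,F)=0$, so the only possibly nonzero higher cohomology is $H^1(X,F)$. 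By cohomology and base change, if $H^1(X,F)=0$ at every closed point then $R^1\pi_{P\ast}\mathbb{F}=0$ and $\pi_{P\ast}\mathbb{F}$ is locally free of rank $\chi(F)=n$; equivalently, a perfect complex whose fibrewise cohomology sits in a single degree is a vector bundle placed in degree zero. Thus everything reduces to the cohomological vanishing.

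To prove $H^1(X,F)=0$, let $p\colon X=\mathrm{Tot}_E(L_1\oplus L_2\oplus L_3)\to E$ be the projection, set $N=L_1\oplus L_2\oplus L_3$ and $\mathcal{A}=p_\ast\oO_X=\Sym(N^\vee)$. Since the $L_i$ are general of degree zero, the only compact curve is the zero section and $H_2(X,\mathbb{Z})=\mathbb{Z}[E]$, so $F$ is supported on a thickening of $E$ and $p$ is finite on its support; as $F$ is pure, $M\cneq p_\ast F$ is a vector bundle on $E$ of rank $d$ with $\deg M=\chi(M)=\chi(F)=n$. Because $p$ is affine, $H^1(X,F)\cong H^1(E,M)$, and the equivalence $p_\ast\colon\Coh(X)\simeq\Coh_{\mathcal{A}}(E)$ identifies subsheaves $F'\subseteq F$ with $\mathcal{A}$-submodules $M'\subseteq M$. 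A direct slope computation, using $\chi(F')=\deg p_\ast F'$ on the elliptic curve and $\omega\cdot[F']=\rank(p_\ast F')\,(\omega\cdot[E])$, gives $\mu(F')=(\omega\cdot[E])^{-1}\mu_E(p_\ast F')$, so the $\mu$-semistability of $F$ required for a JS pair says exactly that $\mu_E(M')\leqslant\mu_E(M)$ for every $\mathcal{A}$-submodule $M'\subseteq M$.

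The key step is to upgrade this to genuine $\oO_E$-semistability of $M$. The $\mathcal{A}$-module structure is encoded by three commuting $\oO_E$-linear maps $x_i\colon M\to M\otimes L_i$, and an $\oO_E$-subsheaf $W\subseteq M$ is an $\mathcal{A}$-submodule precisely when $x_i(W)\subseteq W\otimes L_i$ for all $i$. Suppose $M$ were not $\oO_E$-semistable and let $W$ be its maximal destabilizing subsheaf, which is semistable with $\mu_E(W)>\mu_E(M)$ and $\mu_{\max}(M/W)<\mu_E(W)$. Since $\deg L_i=0$, tensoring by $L_i$ preserves slopes, so the composite $W\hookrightarrow M\xrightarrow{x_i}M\otimes L_i\twoheadrightarrow(M/W)\otimes L_i$ is a map from a semistable sheaf of slope $\mu_E(W)$ into a sheaf all of whose slopes are $<\mu_E(W)$, hence is zero. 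Therefore $x_i(W)\subseteq W\otimes L_i$, i.e. $W$ is an $\mathcal{A}$-submodule, contradicting $\mu_E(W)>\mu_E(M)$. Hence $M$ is $\oO_E$-semistable of slope $n/d>0$, so $M^\vee$ is semistable of negative slope and $H^0(E,M^\vee)=0$; by Serre duality on $E$ (where $K_E\cong\oO_E$) we obtain $H^1(E,M)\cong H^0(E,M^\vee)^\ast=0$, as desired. This is the step I expect to be the crux: the degree-zero hypothesis on the normal bundle is exactly what forces the maximal destabilizer to be $\mathcal{A}$-invariant, which is what allows the transfer of semistability from $X$ to $E$.

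Finally, once $\oO^{[n]}$ is known to be a vector bundle, its rank equals $n+L\cdot\beta=n\geqslant 1$, and taking $L=\oO_X$ with its canonical section $s=\mathrm{id}$ in Lemma \ref{lem on tat sect} produces a section $\sigma$ of $\oO^{[n]}$ whose value at $(F,\phi)$ is $\phi\in H^0(F)$; since $\phi\neq 0$ for every JS pair, $\sigma$ is nowhere vanishing. A rank $n\geqslant 1$ bundle admitting a nowhere vanishing section has $e(\oO^{[n]})=c_n(\oO^{[n]})=0$, which yields the vanishing \eqref{equ on van} in the JS chamber.
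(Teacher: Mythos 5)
Your proposal is correct, but it takes a genuinely different route from the paper's proof. Both arguments reduce the statement to the pointwise vanishing $H^1(X,F)=0$ for every JS pair $(F,s)$ and both close with the same nowhere-vanishing tautological section to deduce \eqref{equ on van}; the difference is how the vanishing is proved. The paper takes a Jordan-H\"older filtration of $F$ and proves the geometric claim that every \emph{stable} one-dimensional sheaf $\eE$ on $X$ is scheme-theoretically supported on the zero section: this uses $\Hom_X(\eE\otimes\pi^{*}L_i^{-1},\eE)=0$, which forces $L_i$ to be torsion if nonzero (here the genericity of the $L_i$ enters), followed by a Koszul resolution argument; Serre duality on $E$ applied to each stable factor $\pi_{*}(F_i/F_{i-1})$ and induction then give $H^1(X,F)=0$. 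You instead never establish scheme-theoretic support on $E$: you push $F$ forward to a rank $d$, degree $n$ bundle $M=p_{*}F$, translate $\mu$-semistability of $F$ into semistability of $M$ among $\mathcal{A}$-submodules, and then show the Harder-Narasimhan maximal destabilizer $W\subseteq M$ is automatically an $\mathcal{A}$-submodule because the three twisting maps $x_i\colon M\to M\otimes L_i$ have degree zero, so $M$ is honestly semistable of positive slope and $H^1(E,M)=0$ by Serre duality. Your argument uses only $\deg L_i=0$ --- no genericity or non-torsion hypothesis --- so it is more robust and in that sense stronger; the paper's argument yields along the way the stronger geometric fact about supports, which is reused elsewhere (e.g.\ in the local resolved conifold sections). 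Two small remarks: your appeal to ``the only compact curve is the zero section'' is both unproven as stated and unnecessary --- all you need is that $p$ is finite on $\Supp F$, which follows from properness of the support together with the fibers of $p$ being affine; and the identification of subsheaves of $F$ with $\oO_E$-coherent $\mathcal{A}$-submodules of $M$ (affineness of $p$) should be stated for compactly supported sheaves, as you implicitly do. Neither affects correctness.
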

\begin{proof}
It is enough to show $H^1(X,F)=0$ for any $(F,s)\in P^{\mathrm{JS}}_n(X,d)$. We take the Jordan-H\"older filtration of $F$: 
$$0=F_0\subset F_1\subset\cdots \subset F_l=F, $$
such that $F_i/F_{i-1}$'s are stable with the same slope. We claim that stable one dimensional sheaves on $X$ are scheme theoretically supported on 
the zero section $i:E\hookrightarrow X$ and hence
$$H^1(X,F_i/F_{i-1})\cong H^1(E,\pi_*(F_i/F_{i-1}))\cong \Hom_E(\pi_*(F_i/F_{i-1}),\oO_E)^{\vee}=0, $$
where $\pi \colon X\to E$ is the projection and the last equality is because $\pi_*(F_i/F_{i-1})$ is stable with positive slope. By induction,
we know $H^1(X,F)=0$. 

Therefore, we are left to prove our claim that any stable one dimensional sheaf $\eE$ on $X$ is scheme theoretically supported on $E$. 
We first show 
\begin{align}\label{equ on vani of hom}\Hom_X(\eE\otimes \pi^*L_i^{-1},\eE)=0, \quad \forall \,\, i=1,2,3.  \end{align}
As $\eE\otimes \pi^*L_i^{-1}$ and $\eE$ are stable with the same slope, if the above Hom space is not zero, then
$$\eE\cong \eE\otimes \pi^*L_i^{-1}. $$ 
Applying $\pi_*$ and taking determinant, we know $L_i\in \Pic^0(E)$ is a torsion line bundle which is a contradiction. 
Finally, consider the Koszul resolution
$$\cdots \to \pi^*(L_1^{-1}\oplus L_2^{-1}\oplus L_3^{-1})\to \oO_X\to \oO_E\to 0, $$
and tensor it with $\eE$, we obtain an exact sequence 
$$\eE\otimes \pi^*(L_1^{-1}\oplus L_2^{-1}\oplus L_3^{-1}) \to \eE \to \eE\otimes \oO_E\to 0. $$
By \eqref{equ on vani of hom}, we conclude that $\eE\cong \eE\otimes \oO_E$, so we are done.
\end{proof}
In other chambers, $\oO^{[n]}$ is in general not a bundle and it seems more tricky to show \eqref{equ on van}.
We believe when a wall-crossing formula is established, the vanishing will be manifestly reduced to the vanishing in JS chamber (wall-crossing formulae in $\DT_4$ setting are not established yet, see \cite{GJT} for a recent 
conjectural proposal).
In Appendix \ref{sect on vanishing}, we prove \eqref{equ on van} in PT chamber when $d=2$.


\section{Examples} 
When $n=0$, Conjecture \ref{main conj} reduces to our previous conjectures \cite[Conjecture 0.2]{CT1}, \cite[Conjecture 0.2]{CMT2}, and evidence
is given in loc. cit. In this section, we concentrate on the case of $n>0$ and compute several examples to support our conjecture.
\subsection{Elliptic fibrations}
For $Y=\mathbb{P}^3$, we take general elements
\begin{align*}
u \in H^0(Y, \oO_Y(-4K_Y)), \quad 
v \in H^0(Y, \oO_Y(-6K_Y)).
\end{align*}
Let $X$
be a Calabi-Yau 4-fold with an elliptic fibration
\begin{align}\label{elliptic CY4}
\pi \colon X \to Y
\end{align}
given by the equation
\begin{align*}
zy^2=x^3 +uxz^2+vz^3
\end{align*}
in the $\mathbb{P}^2$-bundle
\begin{align*}
\mathbb{P}(\oO_Y(-2K_Y) \oplus \oO_Y(-3K_Y) \oplus \oO_Y) \to Y,
\end{align*}
where $[x:y:z]$ are homogeneous coordinates for the above
projective bundle. A general fiber of
$\pi$ is a smooth elliptic curve, and any singular
fiber is either a nodal or cuspidal plane curve.
Moreover, $\pi$ admits a section $\iota$ whose image
corresponds to the fiber point $[0: 1: 0]$.

Let $h$ be a hyperplane in $\mathbb{P}^3$, $f$ be a general fiber of $\pi \colon X\rightarrow Y$ and set
\begin{align}\label{div:BE}
B=:\pi^{\ast}h, \ D:=\iota(\mathbb{P}^3)\in H_{6}(X,\mathbb{Z}).
\end{align}
\subsubsection{JS chamber}
Let $M_{1}(X, r[f])$ be the coarse moduli space of 
one dimensional stable sheaves $E$ on $X$ with $[E]=r[f]$ and $\chi(E)=1$.
The moduli space $P_1^{\mathrm{JS}}(X,r[f])$ of JS stable pairs has a forgetful map 
\begin{align}\label{forget map 1}\psi: P_1^{\mathrm{JS}}(X,r[f])\stackrel{\cong}{\to} M_{1}(X,r[f]), \quad (\oO_X \stackrel{s}{\to} F) \mapsto F,  \end{align}
which is an isomorphism, under which virtual classes can be identified (\cite[Proposition 5.4]{CT1}).
\begin{lem}$($\cite[Lemma 2.1]{CMT1}$)$\label{lem:fib:isom}
For any $r \in \mathbb{Z}_{\geqslant 1}$, there is an isomorphism
$$M_{1}(X,r[f]) \cong X, $$
under which the virtual
class of $M_{1}(X,r[f])$ is given by
\begin{align*} 
[M_{1}(X,r[f])]^{\rm{vir}}=\pm \mathrm{PD}(c_3(X)) \in H_2(X, \mathbb{Z}),
\end{align*}
where the sign corresponds to a choice of orientation in defining the LHS.
\end{lem}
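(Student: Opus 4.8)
The plan is to prove the isomorphism and the virtual-class identity in sequence, reducing everything to the geometry of the genus-one fibres. First I would use that a stable $E$ with $[E]=r[f]$ satisfies $\pi_*[E]=0$, so — exactly as in the proof of Proposition~\ref{prop on vir class on D} (cf.\ \cite{CMT1}) — it is scheme-theoretically supported on a single integral, arithmetic-genus-one fibre $X_y=\pi^{-1}(y)$. On such a fibre the numerics $([E],\chi(E))=(r[f],1)$ say that $E$ is stable of rank $r$ and degree $1$; since $\gcd(r,1)=1$ this is the coprime case, whose moduli space is the fibre itself: Atiyah's classification for smooth elliptic fibres and the compactified Jacobian $\overline{\mathrm{Pic}}^{\,1}(X_y)\cong X_y$ for the nodal and cuspidal Weierstrass fibres. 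I would make this uniform over $Y$ via a relative Fourier--Mukai transform along $\pi$ (available because $\pi$ carries the section $\iota$), identifying $M:=M_1(X,r[f])$ with a relative compactified Jacobian of $X/Y$, which the section trivializes to $X$ itself. In particular the answer is independent of $r$.

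Having $M\cong X$ smooth of dimension $4$, I would next read off the $\DT_4$ obstruction theory along the universal sheaf. The tangent space $\Ext^1_X(E,E)$ is $T_{M}\cong T_X$, so $\ext^1=4$; by stability $\ext^0=\ext^4=1$, Serre duality on the $\mathrm{CY}_4$ gives $\ext^3=\ext^1=4$, and then the identity $\chi_X(E,E)=0$ forces $\ext^2=6$. Thus $\Ext^2_X(E,E)$ is a $6$-dimensional obstruction space carrying the nondegenerate symmetric pairing induced by Serre duality, and the virtual dimension is $\ext^1-\tfrac12\ext^2=1$, matching $[M]^{\mathrm{vir}}\in H_2$. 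Since $M$ is smooth with obstruction bundle $\mathrm{Ob}=\mathcal{E}xt^2(\mathbb E,\mathbb E)$ (the relative $\mathcal{E}xt^2$ of the universal sheaf $\mathbb E$ on $X\times M$), a rank-$6$ quadratic bundle, the Borisov--Joyce class \cite{BJ} reduces in this smooth case to $[M]^{\mathrm{vir}}=\sqrt{e}(\mathrm{Ob})\cap[M]$, the square-root (Pfaffian) Euler class, well defined up to the sign fixed by a choice of orientation (cf.\ \cite{CL1}).

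The final step is to identify $(\mathrm{Ob},Q)$ with $(\wedge^2 T_X,\wedge)$, the pairing being the wedge product $\wedge^2 T_X\otimes\wedge^2 T_X\to\wedge^4 T_X\cong K_X^{-1}\cong\oO_X$, and then to compute its Pfaffian. By the splitting principle, writing $a_1,\dots,a_4$ for the Chern roots of $T_X$ with $\sum_i a_i=c_1(X)=0$, the six roots of $\wedge^2 T_X$ are the $a_i+a_j$, which pair up into $\pm(a_1+a_2),\ \pm(a_1+a_3),\ \pm(a_1+a_4)$; hence
\[
\sqrt{e}(\wedge^2 T_X)=(a_1+a_2)(a_1+a_3)(a_1+a_4)=e_3(a_1,\dots,a_4)=c_3(X),
\]
where the middle equality uses $\sum_i a_i=0$. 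Therefore $[M]^{\mathrm{vir}}=\pm\,c_3(X)\cap[X]=\pm\,\mathrm{PD}(c_3(X))$, the sign being the orientation ambiguity.

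The hard part will be twofold. First, making the isomorphism $M\cong X$ genuinely uniform across the singular (nodal and cuspidal) fibres, where $E$ may be a non-locally-free torsion-free sheaf and one must argue with compactified Jacobians and the relative transform rather than with Atiyah's bundle classification. More seriously, I must upgrade the pointwise computation of $\Ext^2_X(E,E)$ to a global identification of the obstruction bundle, together with its Serre-duality quadratic form, as $(\wedge^2 T_X,\wedge)$ — this is exactly what legitimizes the clean Pfaffian computation above, and it requires care with the universal family rather than working fibre by fibre. Once that identification is in hand, the orientation sign is the only residual freedom, which is what produces the $\pm$ in the statement.
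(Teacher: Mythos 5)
Your proposal is correct and follows essentially the same route as the paper's source for this lemma (\cite[Lemma 2.1]{CMT1}, whose method is echoed in the paper's own proof of Proposition \ref{prop on JS inv ell fib} for the 3-fold analogue): the relative Fourier--Mukai transform along the elliptic fibration \cite{BM} identifies $M_1(X,r[f])$ with $X$ uniformly over all (possibly singular) fibers, and simultaneously identifies the deformation--obstruction theory of $E$ with that of a point sheaf, so that the obstruction bundle with its Serre-duality pairing becomes $\left(\wedge^2 T_X,\wedge\right)$. Your half-Euler-class (Pfaffian) computation $(a_1+a_2)(a_1+a_3)(a_1+a_4)=c_3(X)$ using $\sum_i a_i=0$ is exactly how the sign-ambiguous class $\pm\mathrm{PD}(c_3(X))$ arises there.
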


\begin{prop}\label{prop on JS inv ell fib}
Let $\pi \colon X \to \mathbb{P}^3$ be the elliptic fibered CY 4-fold \eqref{elliptic CY4}, $S\subset \mathbb{P}^3$ be a general degree $l$ hypersurface and $L=\oO_X(\pi^{-1}(S))$ be the line bundle. For multiple 
fiber classes $r[f]$ with $r\geqslant 1$, we have  
$$P^{\mathrm{JS}}_{1,r[f]}(L)=\pm n_{0,r[f]}(\pi^{-1}(S))=\pm 960\,l, $$
i.e. Conjecture \ref{main conj} holds in JS chamber for this case.
\end{prop}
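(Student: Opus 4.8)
The plan is to reduce the tautological invariant to an intersection number on the sheaf moduli space and to identify the tautological line bundle with $L$ itself. First I would use the forgetful isomorphism \eqref{forget map 1}, under which the virtual classes are identified, to write
\[
P^{\mathrm{JS}}_{1,r[f]}(L)=\int_{[M_{1}(X,r[f])]^{\mathrm{vir}}}e(L^{[1]}).
\]
Since $\chi(F)=1$ and $L\cdot r[f]=0$, Riemann--Roch gives $\rk L^{[1]}=1$; moreover every JS-stable $F$ in class $r[f]$ is a $\mu$-semistable sheaf of positive slope scheme-theoretically supported on a single fibre, and $L$ restricts trivially to each fibre, so $H^{1}(F\otimes L)=H^{1}(F)=0$ and $L^{[1]}$ is a genuine line bundle.

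Next I would identify $c_{1}(L^{[1]})$. The argument of Lemma~\ref{lem on tat sect} applies verbatim here: its proof uses only $Z_{t}$-stability together with $L\cdot\beta=0$, and every fibre-supported subsheaf of $F$ has trivial $L$-degree, so the tautological section $\sigma\in H^{0}(L^{[1]})$ cuts out $P^{\mathrm{JS}}_{1}(\pi^{-1}(S),r[f])$ scheme-theoretically. Under the isomorphism $M_{1}(X,r[f])\cong X$ of Lemma~\ref{lem:fib:isom} this zero locus is exactly the smooth divisor $\pi^{-1}(S)$ (the sheaves whose support lies over $S$), which is reduced of codimension one. Hence $L^{[1]}\cong\oO_{X}(\pi^{-1}(S))=L$ and $c_{1}(L^{[1]})=l\,B$.

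Combining this with the virtual class computation $[M_{1}(X,r[f])]^{\mathrm{vir}}=\pm\,\mathrm{PD}(c_{3}(X))$ of Lemma~\ref{lem:fib:isom} yields
\[
P^{\mathrm{JS}}_{1,r[f]}(L)=\pm\int_{X}c_{1}(L)\cup c_{3}(X)=\pm\,l\int_{X}B\cdot c_{3}(X).
\]
It then remains to evaluate the Chern number $\int_{X}B\cdot c_{3}(X)=960$, which I would carry out by standard intersection theory in the projective bundle $\mathbb{P}(\oO_{Y}(-2K_{Y})\oplus\oO_{Y}(-3K_{Y})\oplus\oO_{Y})$ using the Grothendieck relation and the Whitney formula for the Weierstrass hypersurface (this is the reduction to \cite{CMT1} advertised in the introduction). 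This gives the first equality $P^{\mathrm{JS}}_{1,r[f]}(L)=\pm\,960\,l$.

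Finally, to match Conjecture~\ref{main conj} I would extract the coefficient of $q^{1}y^{r[f]}$ on the right-hand side: the MacMahon factors are independent of $q$, so only the $k=1$ factor in the genus-zero product contributes, and only for $\beta=r[f]$, giving $n_{0,r[f]}(\pi^{-1}(S))$. Running the same line-bundle-cutting argument on the (again Weierstrass) elliptic $3$-fold $\pi^{-1}(S)\to S$ identifies the genus-zero virtual class as the restriction of $\pm\,\mathrm{PD}(c_{3}(X))$ to the divisor $[\pi^{-1}(S)]$, so that $n_{0,r[f]}(\pi^{-1}(S))=\int_{X}[\pi^{-1}(S)]\cdot c_{3}(X)=960\,l$, as required. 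I expect the main obstacle to be the rigorous identification $L^{[1]}\cong L$ --- that is, simultaneously establishing the vector-bundle property and adapting Lemma~\ref{lem on tat sect} to this non-rational-curve situation so that the scheme structure of $Z(\sigma)$ is exactly the reduced divisor $\pi^{-1}(S)$; the remaining Chern-number evaluation is routine.
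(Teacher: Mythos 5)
Your computation of the pair invariant $P^{\mathrm{JS}}_{1,r[f]}(L)$ is correct but follows a genuinely different route from the paper's. The paper never identifies $L^{[1]}$ with $L$: it uses the nowhere-vanishing tautological section of the rank-one bundle $\dR\pi_{P\ast}\mathbb{F}$ to get $\det(\dR\pi_{P\ast}\mathbb{F})\cong\oO$, concludes that $\mathbb{F}$ is pulled back from the \emph{normalized} universal sheaf $\mathbb{F}_{\mathrm{norm}}$ on $M_1(X,r[f])\times X$, and then expands $e(L^{[1]})$ by Grothendieck--Riemann--Roch, quoting the earlier primary/descendent computations of \cite{CMT1,CT2} to kill the $\ch_3$- and $\ch_5$-terms and to evaluate the $\ch_4$-term as $\pm 960\,l$. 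Your alternative --- $Z(\sigma)=P^{\mathrm{JS}}_1(\pi^{-1}(S),r[f])$ is smooth (hence reduced) and is set-theoretically $\pi^{-1}(S)$ under $M_1(X,r[f])\cong X$, so $L^{[1]}\cong\oO_X(Z(\sigma))=L$ and the invariant is $\pm\, l\int_X B\cdot c_3(X)$ --- is cleaner: it avoids both the normalization step and the citations of descendent invariants. What it requires instead is exactly what you flag: the adaptation of Lemma \ref{lem on tat sect} (which does go through, since every subsheaf class here is a fiber class of trivial $L$-degree), that the isomorphism of Lemma \ref{lem:fib:isom} matches the support of a sheaf with the fiber through the corresponding point (so $Z(\sigma)$ is set-theoretically $\pi^{-1}(S)$), and that $P^{\mathrm{JS}}_1(\pi^{-1}(S),r[f])\cong M_1(\pi^{-1}(S),r[f])\cong\pi^{-1}(S)$ is smooth (so $Z(\sigma)$ is reduced, hence equal to $\pi^{-1}(S)$ as a divisor).

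There is, however, a genuine gap on the $3$-fold side. The number $n_{0,r[f]}(\pi^{-1}(S))$ is defined by the Behrend--Fantechi class of Proposition \ref{prop on vir class on D}, which is intrinsic to $W:=\pi^{-1}(S)$; it is not produced by any ``line-bundle-cutting'' of the $4$-fold moduli space, so your sentence deducing the genus-zero virtual class from that argument proves nothing as written. What is needed (and what the paper does) is: the Fourier--Mukai transform of \cite{BM} gives $M_1(W,r[f])\cong W$ with obstruction bundle $\wedge^2 TW$, hence $[M_1(W,r[f])]^{\mathrm{vir}}=\mathrm{PD}(-c_3(W)+c_1(W)c_2(W))$; the normal bundle sequence $0\to TW\to TX|_W\to K_W\to 0$ then converts this to $n_{0,r[f]}(W)=-\int_X c_3(X)\cdot c_1(\oO_X(W))$, which \cite[(61)]{KP} evaluates to $960\,l$. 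Note also the signs: $\int_X c_3(X)\cdot B=-960$, so your assertions $\int_X B\cdot c_3(X)=960$ and $n_{0,r[f]}(W)=\int_X[\pi^{-1}(S)]\cdot c_3(X)$ each carry a sign error (which happen to cancel). This matters because $n_{0,\beta}(D)$, unlike the $4$-fold pair invariant, has no orientation ambiguity, so its sign must come out to $+960\,l$. Finally, your coefficient extraction from Conjecture \ref{main conj} is right but for a reason you omit: cross terms between the genus-zero factors and the MacMahon factors are absent because in the JS chamber $t=\frac{1}{r(\omega\cdot f)}+0^+$ the range $1\leqslant k\leqslant[t(\omega\cdot\beta')]$ is empty for every class $\beta'=r'[f]$ with $r'<r$.
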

\begin{proof}
We first compute invariants on the 3-fold. The method is similar to \cite[Lemma 2.1]{CMT1}. 
Since $W:=\pi^{-1}(S)$ has an elliptic fibration to $S$, by the Fourier-Mukai transform \cite{BM}, we have 
$$M_{1}(W,r[f])\cong W, $$
under which the virtual class satisfies 
$$[M_{1}(W,r[f])]^{\mathrm{vir}}=\mathrm{PD}(e(\wedge^2 TW))=\mathrm{PD}(-c_3(W)+c_1(W)c_2(W)). $$
Using the exact sequence 
$$0\to TW\to TX|_W\to K_W\to 0, $$
we have 
$$\int_{X}c_3(X)\cdot c_1(\oO_X(W))=\int_{W}c_3(W)-c_1(W)c_2(W). $$
Therefore, 
$$n_{0,r[f]}(W)=-\int_{X}c_3(X)\cdot c_1(\oO_X(W))=-l\int_{X}c_3(X)\cdot B=960\,l, $$
where we use \cite[(61)]{KP} in the last equality. 

Next we compute JS stable pair invariants. Denote 
$$\{\oO_{P_1^{\mathrm{JS}}(X,r[f])\times X}\stackrel{s}{\to} \mathbb{F}\} $$ 
to be the universal JS stable pair.
From this, we get a nowhere zero tautological section 
$$\oO_{P_1^{\mathrm{JS}}(X,r[f])}\stackrel{\pi_{P\ast}(s)}{\longrightarrow}\dR\pi_{P\ast}(\mathbb{F}), $$
of the tautological bundle $\dR\pi_{P\ast}(\mathbb{F})$, where we use the fact that $H^1(F)=0$ for any $(\oO_X\to F)\in P_1^{\mathrm{JS}}(X,r[f])$ and 
$\pi_P \colon
 P_1^{\mathrm{JS}}(X,r[f])\times X\to P_1^{\mathrm{JS}}(X,r[f])$ denotes the projection map. Taking determinant of this section gives an isomorphism 
$$\det\left(\dR\pi_{P\ast}(\mathbb{F})\right)\cong \oO_{P_1^{\mathrm{JS}}(X,r[f])}. $$ 
This means under the isomorphism $\psi$ \eqref{forget map 1}, $\mathbb{F}$ is the pullback of the `normalized' universal one dimensional sheaf 
$\mathbb{F}_{\mathrm{norm}}$ of $M_{1}(X,r[f])$ introduced in \cite[(1.7)]{CT2}.

Let $\pi_M$, $\pi_X$ be projections of $M_1(X,r[f])\times X$ to each factor. 
For $L=\oO_X(\pi^{-1}(S))$, the tautological bundle $\dR\pi_{M\ast}(\mathbb{F}_{\mathrm{norm}}\boxtimes L)$ is rank one. 
By Grothendieck-Riemann-Roch formula, 
\begin{align*}P^{\mathrm{JS}}_{1,r[f]}(L)
&=\int_{[M_{1}(X,r[f])]^{\mathrm{vir}}}e\left(\dR\pi_{M\ast}(\mathbb{F}_{\mathrm{norm}}\boxtimes L)\right)\\
&=\int_{[M_{1}(X,r[f])]^{\mathrm{vir}}}\ch\left(\dR\pi_{M\ast}(\mathbb{F}_{\mathrm{norm}}\boxtimes L)\right) \\
&=\int_{[M_{1}(X,r[f])]^{\mathrm{vir}}}\pi_{M\ast}\big(\ch(\mathbb{F}_{\mathrm{norm}})\cdot\pi_{X}^*(\ch(L)\cdot \td(X))\big) \\
&=\int_{[M_{1}(X,r[f])]^{\mathrm{vir}}}\pi_{M\ast}\left(\ch_3(\mathbb{F}_{\mathrm{norm}})\cdot\pi_{X}^*\left(\frac{1}{2}c_1(L)^2+\frac{1}{12}c_2(X)\right)\right) \\
&\quad +\int_{[M_{1}(X,r[f])]^{\mathrm{vir}}}\pi_{M\ast}\left(\ch_4(\mathbb{F}_{\mathrm{norm}})\cdot\pi_{X}^*\left(c_1(L)\right)\right)
+\int_{[M_{1}(X,r[f])]^{\mathrm{vir}}}\pi_{M\ast}\left(\ch_5(\mathbb{F}_{\mathrm{norm}})\right) \\
&=\int_{[M_{1}(X,r[f])]^{\mathrm{vir}}}\pi_{M\ast}\left(\ch_4(\mathbb{F}_{\mathrm{norm}})\cdot\pi_{X}^*\left(c_1(L)\right)\right)=\pm 960\,l.
\end{align*}
Here we use Lemma \ref{lem:fib:isom} in the second equality and in the last equality we use previous computations of primary/descendent invariants of $M_{1}(X,r[f])$ (ref. \cite[Proposition 2.3]{CMT1}, \cite[Proposition 2.7, Section 1.7]{CT2}).
\end{proof}

\subsubsection{PT chamber}
For the fiber class $[f]$, we have a similar forgetful map 
\begin{align}\label{forget map 2}\psi: P_n(X,[f])\to M_{n}(X,[f]), \quad (\oO_X \stackrel{s}{\to} F) \mapsto F, \quad n\geqslant 1, \end{align}
which is a $\mathbb{P}^{n-1}$-bundle, where $M_{n}(X, [f])$ is the coarse moduli space of 
one dimensional stable sheaves $E$ on $X$ with $[E]=[f]$ and $\chi(E)=n$.
\begin{lem}\label{lem identify Mn and M1} 
We have an isomorphism
\begin{align*}
M_{1}(X,[f]) \stackrel{\cong}{\to} M_{n}(X,[f]), \quad E\mapsto E\otimes \oO_X(D)^{\otimes (n-1)}, 
\end{align*}
and the virtual classes of both sides are identified 
under the above isomorphism. 
\end{lem}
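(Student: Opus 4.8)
The plan is to first exhibit $\Phi\colon E \mapsto E\otimes \oO_X(D)^{\otimes(n-1)}$ as an isomorphism of moduli schemes, and then to transport the $\DT_4$ virtual class along it. Since $D=\iota(\mathbb{P}^3)$ is the image of a section of $\pi$, it meets each fiber in a single point, so $D\cdot [f]=1$. Tensoring by a line bundle leaves the support cycle unchanged, hence $[E\otimes \oO_X(D)^{\otimes(n-1)}]=[f]$, while Riemann--Roch gives
\[
\chi\bigl(E\otimes \oO_X(D)^{\otimes(n-1)}\bigr)=\chi(E)+(n-1)\,(D\cdot [f])=1+(n-1)=n .
\]
Thus $\Phi$ has the correct numerical target.

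The next step is stability. Because $\pi_\ast[f]=0$, every stable $E$ with $[E]=[f]$ is scheme-theoretically supported on a single fiber $\pi^{-1}(p)$, an irreducible genus-one curve (cf.\ the argument used for $M_1(X,r[f])$, ref.\ \cite[Lemma 2.2]{CMT1}). As a stable sheaf, $E$ is pure one-dimensional, so every nonzero subsheaf $E'\subset E$ is again one-dimensional and, by irreducibility of the support, satisfies $[E']=[f]$. Writing $L=\oO_X(D)^{\otimes(n-1)}$, this means $\omega\cdot [E']=\omega\cdot [f]$ and $L\cdot [E']=L\cdot [f]$ are independent of $E'$, so
\[
\mu(E'\otimes L)=\frac{\chi(E')+L\cdot [f]}{\omega\cdot [f]},\qquad
\mu(E\otimes L)=\frac{\chi(E)+L\cdot [f]}{\omega\cdot [f]},
\]
and the inequality $\mu(E'\otimes L)<\mu(E\otimes L)$ holds exactly when $\mu(E')<\mu(E)$. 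Hence $(-)\otimes L$ preserves stability, and $(-)\otimes L^{-1}$ is an inverse on $M_n(X,[f])$. Applying the same twist to the universal sheaf produces mutually inverse morphisms, so $\Phi$ is an isomorphism of coarse moduli schemes. I would stress that irreducibility of $[f]$ is essential here: for one-dimensional sheaves the slope shift under tensoring is not uniform across classes, and it is precisely the single-fiber support that forces all competing subsheaves into the class $[f]$.

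Finally I would match the virtual classes. Since $(-)\otimes L$ is an exact autoequivalence of $D^b\Coh(X)$, it induces canonical isomorphisms $\RHom_X(E,E)\simeq \RHom_X(E\otimes L,E\otimes L)$ compatible with trace and with Serre duality; as $X$ is Calabi--Yau, the induced isomorphism on $\Ext^2$ intertwines the self-duality pairings. Consequently the self-dual deformation-obstruction datum underlying the Borisov--Joyce $\DT_4$ construction---including the quadratic refinement on $\Ext^2$ and hence the half-Euler-class cycle---transports isomorphically along $\Phi$, giving $\Phi_\ast[M_1(X,[f])]^{\mathrm{vir}}=[M_n(X,[f])]^{\mathrm{vir}}$.

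The main obstacle is orientation bookkeeping: the $\DT_4$ class is defined only up to the sign fixed by an orientation of a real line bundle, so I must check that $\Phi$ carries the chosen orientation on $M_1(X,[f])$ to that on $M_n(X,[f])$---equivalently, that the autoequivalence $(-)\otimes L$ acts trivially on the orientation torsor (or simply fix the orientation on the target to be the pushforward). This is the only genuinely delicate point; once the single-fiber support and purity are established, everything else is formal.
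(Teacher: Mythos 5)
Your proof is correct and follows essentially the same route as the paper: both rest on the fact that stable sheaves of class $[f]$ are scheme-theoretically supported on (irreducible) fibers, so twisting by $\oO_X(D)$ --- which meets each fiber in a single point --- preserves stability and shifts $\chi$ by one, while the virtual-class identification is formal because $(-)\otimes\oO_X(D)$ is an autoequivalence compatible with the deformation-obstruction theory and Serre duality. The paper compresses all of this (including the orientation bookkeeping you rightly flag, which is absorbed into the ``choice of orientation'' convention) into a two-line proof, so your write-up simply supplies the details the authors deem obvious.
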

\begin{proof}
Any $E\in M_{n}(X, [f])$ is scheme theoretically supported on a fiber of \eqref{elliptic CY4} (ref. \cite[Lemma 2.2]{CMT1}), 
so $E\otimes \oO_X(D)$ is stable with $[E\otimes \oO_X(D)]=[f]$ and $\chi(E\otimes \oO_X(D))=n+1$. 
Therefore we obtain the above isomorphism. Because of this isomorphism, $M_{n}(X,[f])$ is a fine moduli 
space whose virtual class can be defined as in $M_{1}(X,[f])$ case. 
The identification of virtual classes is obvious.
\end{proof}
\begin{lem}\label{lem on PT vir class ell fib}
Let $n\geqslant 1$. Under the morphism \eqref{forget map 2}, we have 
\begin{align*} 
[P_{n}(X,[f])]^{\rm{vir}}=\psi^*[M_{n}(X,[f])]^{\rm{vir}} \in H_{2n}(P_{n}(X,[f]), \mathbb{Z}),
\end{align*}
for a choice of orientation.
\end{lem}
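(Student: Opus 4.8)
The plan is to compare the two virtual classes through their obstruction theories. On the one hand, $[P_n(X,[f])]^{\mathrm{vir}}$ is the Borisov--Joyce/Oh--Thomas class built from the self-dual complex $\RHom_X(\mathbb{I}^\bullet,\mathbb{I}^\bullet)_0$, where $\mathbb{I}^\bullet=\{\oO\to\mathbb{F}\}$ is the universal pair; on the other hand $[M_n(X,[f])]^{\mathrm{vir}}$ comes from $\RHom_X(\mathbb{F},\mathbb{F})_0$. The first observation is that both moduli spaces are \emph{smooth}: by Lemma \ref{lem identify Mn and M1} and Lemma \ref{lem:fib:isom} we have $M_n(X,[f])\cong M_1(X,[f])\cong X$, and by \eqref{forget map 2} the map $\psi$ is a $\mathbb{P}^{n-1}$-bundle over it, hence $P_n(X,[f])$ is smooth with $\psi^*[M_n(X,[f])]=[P_n(X,[f])]$ as fundamental classes (flat pullback along a projective bundle). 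In this smooth setting the $\mathrm{DT}_4$ class is the Oh--Thomas square root Euler class $\sqrt{e}(-)$ of the rank-$6$ obstruction bundle $\Ext^2_X(-,-)_0$, capped with the fundamental class; so it suffices to identify these obstruction bundles compatibly with their Serre-duality quadratic forms and with a choice of orientation.

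The heart of the argument is a deformation computation at a pair $I^\bullet=\{\oO\xrightarrow{s}F\}$, using the distinguished triangle $F[-1]\to I^\bullet\to\oO\xrightarrow{s}F$. Applying $\RHom_X(-,-)$ and using that $F$ is supported on fibres of \eqref{elliptic CY4}, so $H^{>0}(F)=0$ and, by Serre duality on the Calabi--Yau $X$, $\Ext^i_X(F,\oO)\cong H^{4-i}(F)^\vee$, the associated spectral sequence collapses to give
\begin{align*}
\Ext^1_X(I^\bullet,I^\bullet)_0&\cong\Ext^1_X(F,F)_0\oplus\big(H^0(F)/\langle s\rangle\big),\\
\Ext^2_X(I^\bullet,I^\bullet)_0&\cong\Ext^2_X(F,F)_0,\\
\Ext^3_X(I^\bullet,I^\bullet)_0&\cong\Ext^3_X(F,F)_0\oplus\big(H^0(F)/\langle s\rangle\big)^\vee.
\end{align*}
The crucial point is that the middle (obstruction) term is unchanged, while the extra directions occur only in the tangent $\Ext^1$ — where $H^0(F)/\langle s\rangle$ is exactly the relative tangent $T_\psi$ of the $\mathbb{P}^{n-1}$-bundle at $[s]$ — and, Serre-dually, in $\Ext^3$. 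I would then globalize this to an isomorphism of perfect complexes over $P_n(X,[f])$, exhibiting its self-dual obstruction theory as $\psi^*$ of that of $M_n(X,[f])$ together with the smooth, isotropic relative part $T_\psi\oplus T_\psi^\vee[-2]$. (As a consistency check, this matches the rank count: both obstruction bundles have rank $6$, recovering the $c_3(X)$-type class of Lemma \ref{lem:fib:isom}.)

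Granting $\mathrm{Ob}_{P}\cong\psi^*\mathrm{Ob}_{M}$ as quadratic bundles, and fixing on $P_n(X,[f])$ the orientation pulled back from $M_n(X,[f])$ — this is the ``choice of orientation'' in the statement — the square root Euler class is compatible with flat pullback, and one concludes
\begin{align*}
[P_n(X,[f])]^{\mathrm{vir}}
&=\sqrt{e}\big(\mathrm{Ob}_{P}\big)\cap[P_n(X,[f])]
=\sqrt{e}\big(\psi^*\mathrm{Ob}_{M}\big)\cap\psi^*[M_n(X,[f])]\\
&=\psi^*\Big(\sqrt{e}\big(\mathrm{Ob}_{M}\big)\cap[M_n(X,[f])]\Big)
=\psi^*[M_n(X,[f])]^{\mathrm{vir}}.
\end{align*}

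The main obstacle is the middle step: promoting the fibrewise identification above to a genuine family statement, and verifying that the relative summand $T_\psi\oplus T_\psi^\vee$ splits off \emph{isotropically} so that it contributes nothing to the square root Euler class and only accounts for the expected dimension shift by $n-1$. Concretely one must check that the orientation data of the Borisov--Joyce/Oh--Thomas construction restricts compatibly along $\psi$; this is precisely the $\mathrm{DT}_4$ analogue of the compatibility of virtual classes with smooth (here projective-bundle) pullback, and is the only place where the $(-2)$-shifted self-duality and orientation subtleties genuinely enter.
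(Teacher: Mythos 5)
Your proposal is correct and follows essentially the same route as the paper: the paper likewise compares the two deformation--obstruction theories at a pair $I=(\oO_X\to F)$, using the distinguished triangles associated to $I\to\oO_X\to F$ together with $H^1(F)=0$ and Serre duality to obtain $\Ext^2_X(I,I)_0\cong \Ext^1_X(I,F)\cong\Ext^2_X(F,F)$ compatibly with the quadratic form, and then invokes a family version of this computation to identify the obstruction bundle of $P_n(X,[f])$ with $\psi^*$ of that of $M_n(X,[f])$. The differences are purely presentational --- you additionally record the $\Ext^1/\Ext^3$ pieces (the relative tangent directions) and the square-root Euler class bookkeeping, which the paper leaves implicit --- and the paper treats the globalization/orientation step with the same brevity that you flag as the remaining obstacle.
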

\begin{proof}
Let $I=(\oO_X\to F)\in P_{n}(X,[f])$, then $F$ is stable and scheme theoretically supported on a fiber. By Serre duality, we have 
$H^1(F)=0$. 
By \cite[Proof of Lemma 2.3]{CMT2}, we have distinguished triangles
\begin{align*}
&\RHom_X(F,F)\to \RHom_X(\oO_X,F) \to \RHom_X(I,F), \\
&\RHom_X(I, F) \to \RHom_X(I, I)_0[1] \to \RHom_X(F, \oO_X)[2].
\end{align*}
By taking the associated long exact sequences of cohomologies 
and using $H^1(F)=0$, we have the isomorphisms  
\begin{align*}\label{ell fib 2}
\Ext^2_X(F,F)\stackrel{\cong}{\leftarrow}
 \Ext^1_X(I,F) \stackrel{\cong}{\to}
 \Ext^2_X(I,I)_0,   \end{align*}
under which Serre duality can be identified. A family version of this argument shows 
the obstruction bundle of $P_{n}(X,[f])$ is the pull-back of the obstruction bundle of $M_{n}(X,[f])$.
\end{proof}
Similarly to Proposition \ref{prop on JS inv ell fib}, we have 
the following proposition: 
\begin{prop}\label{prop on PT inv ell fib}
Let $S\subset \mathbb{P}^3$ be a general degree $l$ hypersurface and $\pi^{-1}(S)$ be the inverse image under \eqref{elliptic CY4}. 
Then for $L=\oO_X(\pi^{-1}(S))$ and $n\geqslant 1$, we have 
\begin{align*}
P_{n,[f]}(L)=\pm n\cdot n_{0,[f]}(\pi^{-1}(S))=\pm 960\,ln, 
\end{align*}
i.e. Conjecture \ref{main conj} holds in PT chamber for this case.
\end{prop}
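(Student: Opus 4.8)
The plan is to mirror the proof of Proposition~\ref{prop on JS inv ell fib}, the only genuinely new feature being the $\mathbb{P}^{n-1}$-bundle structure of the forgetful map $\psi \colon P_n(X,[f]) \to M_n(X,[f])$ of \eqref{forget map 2}. Writing $M=M_n(X,[f])$ and $P=P_n(X,[f])$, I would first record the shape of the tautological complex on the total space of this bundle. The argument of Lemma~\ref{lem on PT vir class ell fib} gives $H^1(F)=0$, and the same Serre duality computation (using $L\cdot[f]=0$, so that $F\otimes L$ is again scheme-theoretically supported on a fibre) gives $H^1(F\otimes L)=0$. Hence $\mathcal{V}:=\pi_{M\ast}\mathbb{F}_M$ and $\mathcal{W}:=\dR\pi_{M\ast}(\mathbb{F}_M\otimes\pi_X^{\ast}L)$ are vector bundles of rank $n$ on $M$, with $P=\mathbb{P}(\mathcal{V})$ and universal sheaf $\mathbb{F}_P=(\psi\times\id)^{\ast}\mathbb{F}_M\otimes\pi_P^{\ast}\oO_P(1)$ on $P\times X$. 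Flat base change along $\psi$ then identifies
\[
L^{[n]}=\oO_P(1)\otimes\psi^{\ast}\mathcal{W}.
\]

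Next I would push the integral down to $M$. By Lemma~\ref{lem on PT vir class ell fib} one has $[P]^{\mathrm{vir}}=\psi^{\ast}[M]^{\mathrm{vir}}$, and the splitting principle gives $e(L^{[n]})=\sum_{k=0}^{n}\xi^{\,n-k}\psi^{\ast}c_k(\mathcal{W})$ with $\xi=c_1(\oO_P(1))$. Since $[M]^{\mathrm{vir}}\in H_2(M,\mathbb{Z})$ and $\psi_{\ast}(\xi^{\,n-1+i})$ is the $i$-th Segre class of $\mathcal{V}$, only the terms $k=0,1$ survive the projection formula, yielding
\[
P_{n,[f]}(L)=\int_{[M]^{\mathrm{vir}}}\bigl(c_1(\mathcal{W})-c_1(\mathcal{V})\bigr).
\]
This combination is manifestly independent of the choice of universal sheaf $\mathbb{F}_M$ (a twist by a line bundle from $M$ shifts $c_1(\mathcal{W})$ and $c_1(\mathcal{V})$ equally), as it must be; in particular this forces the Segre sign $s_1(\mathcal{V})=-c_1(\mathcal{V})$, and the class is the exact analogue of the one computed in the JS case.

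Finally I would evaluate this integral by Grothendieck-Riemann-Roch, exactly as in Proposition~\ref{prop on JS inv ell fib}: $c_1(\mathcal{W})-c_1(\mathcal{V})$ is the degree-two component of $\pi_{M\ast}(\ch(\mathbb{F}_M)\cdot\pi_X^{\ast}((\ch L-1)\,\td X))$. Transporting the computation to $M_1(X,[f])\cong X$ via Lemma~\ref{lem identify Mn and M1}, the universal sheaf $\mathbb{F}_M$ becomes $\mathbb{F}_1\otimes\pi_X^{\ast}\oO_X((n-1)D)$ and $[M]^{\mathrm{vir}}$ becomes $\pm\mathrm{PD}(c_3(X))$ by Lemma~\ref{lem:fib:isom}; with $c_1(L)=lB$ and $c_1(X)=0$ the integral collapses to intersection numbers on $X$ of the type already evaluated in the descendent computations of \cite[Proposition~2.3]{CMT1} and \cite[Proposition~2.7, Section~1.7]{CT2}. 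The untwisted part reproduces the JS value $\pm960\,l=\pm n_{0,[f]}(\pi^{-1}(S))$, while the twist $\oO_X((n-1)D)$, which has fibrewise degree $n-1$ since $D\cdot[f]=1$, supplies the remaining factor and upgrades this to $\pm960\,ln=\pm n\cdot n_{0,[f]}(\pi^{-1}(S))$, as predicted by Conjecture~\ref{main conj}.

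The main obstacle is this last GRR bookkeeping: one must track the universal sheaf carefully through the chain $P_n(X,[f])\to M_n(X,[f])\cong M_1(X,[f])\cong X$ and check that the expansion is exactly linear in $n$, i.e. that the $(n-1)D$-term alone accounts for the excess factor while all higher powers of $D$ as well as the $\td$- and $c_2(X)$-corrections drop out for degree reasons. By contrast, the bundle-theoretic reduction of the first two steps is formal; it is the Fourier-Mukai/Chern-character matching against the known descendent invariants of $M_1(X,[f])$ that carries the real content.
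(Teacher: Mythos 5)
Your proposal is correct and follows essentially the same route as the paper's proof: both use the $\mathbb{P}^{n-1}$-bundle structure together with Lemma \ref{lem on PT vir class ell fib} to reduce the invariant to $\int_{[M_n(X,[f])]^{\mathrm{vir}}}\bigl(c_1(\dR\pi_{M\ast}(\mathbb{F}\otimes\pi_X^{\ast}L))-c_1(\dR\pi_{M\ast}\mathbb{F})\bigr)$, then transport everything to $M_1(X,[f])\cong X$ via Lemma \ref{lem identify Mn and M1} and evaluate by Grothendieck-Riemann-Roch against the known primary/descendent invariants of \cite{CMT1, CT2}. The only cosmetic difference is that you integrate over the fibres of $\psi$ using Segre classes and $[M]^{\mathrm{vir}}\in H_2$, whereas the paper multiplies by the rank-three obstruction bundle and invokes the projective-bundle relation for $c_1(\oO(1))^{n}$ --- these are the same computation.
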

\begin{proof}
We have the Cartesian diagram:
\begin{align*}\xymatrix{
 X & \ar[l]_{\pi_X\quad \quad \, } P_{n}(X,[f])\times X \ar[d]_{\pi_P} \ar[r]^{\bar{\psi}} & M_{n}(X,[f])\times X \ar[d]_{\pi_M} \ar[r]^{\quad \quad \quad \pi_X} & X  &  \\
&  P_{n}(X,[f]) \ar[r]^{\psi} & M_{n}(X,[f]).  &  &  }
\end{align*}

Let $\mathbb{F} \to M_{n}(X,[f]) \times X$ be a universal sheaf. 
Then the map $\psi$ \eqref{forget map 2} identifies $P_n(X, [f])$ with 
$\mathbb{P}(\pi_{M\ast}\mathbb{F})$. The universal stable pair is given by 
\begin{align*}
\mathbb{I}=
(\mathcal{O}_{P_n(X,[f])\times X} 
\stackrel{s}{\rightarrow} \mathbb{F}^{\dag}), \quad
\mathbb{F}^{\dag}:=\bar{\psi}^{\ast}\mathbb{F}\otimes \oO(1),
\end{align*} 
where $\oO(1)$ is the tautological line bundle on 
$\mathbb{P}(\pi_{M\ast}\mathbb{F})$
and $s$ is the tautological map.  

For a line bundle $L\to X$ such that $L\cdot [f]=0$, we have
\begin{align*}P_{n,[f]}(L) 
&=\int_{[P_{n}(X,[f])]^{\mathrm{vir}}}e\left(\dR\pi_{P\ast}(\bar{\psi}^*\mathbb{F}\otimes  \pi_X^*L)\otimes \oO(1)\right)\\
&=\int_{[P_{n}(X,[f])]^{\mathrm{vir}}}e\left(\psi^*\dR\pi_{M\ast}(\mathbb{F}\otimes  \pi_X^*L)\otimes \oO(1)\right)\\
&=\int_{P_{n}(X,[f])}e\left(\psi^*\dR\pi_{M\ast}(\mathbb{F}\otimes  \pi_X^*L)\otimes \oO(1)\right)\cdot \psi^*e(\mathrm{Ob}_{M_{n}(X,[f])})\\
&=\int_{P_{n}(X,[f])}c_1(\oO(1))^{n-1}c_1\left(\psi^*\dR\pi_{M\ast}(\mathbb{F}\otimes  \pi_X^*L)\right)\cdot \psi^*e(\mathrm{Ob}_{M_{n}(X,[f])})\\
&\quad +\int_{P_{n}(X,[f])}c_1(\oO(1))^{n}\cdot \psi^*e(\mathrm{Ob}_{M_{n}(X,[f])}) \\
&=\int_{P_{n}(X,[f])}c_1(\oO(1))^{n-1}c_1\left(\psi^*\dR\pi_{M\ast}(\mathbb{F}\otimes  \pi_X^*L)\right)\cdot \psi^*e(\mathrm{Ob}_{M_{n}(X,[f])})\\
&\quad -\int_{P_{n}(X,[f])}c_1(\oO(1))^{n-1}c_1\left(\psi^*\dR\pi_{M\ast}(\mathbb{F})\right)\cdot \psi^*e(\mathrm{Ob}_{M_{n}(X,[f])}) \\
&=\int_{[M_{n}(X,[f])]^{\mathrm{vir}}}\Big(c_1\left(\dR\pi_{M\ast}(\mathbb{F}\otimes  \pi_X^*L)\right)-c_1\left(\dR\pi_{M\ast}(\mathbb{F})\right)\Big) \\
&=\int_{[M_{1}(X,[f])]^{\mathrm{vir}}}c_1\left(\dR\pi_{M\ast}\left(\mathbb{F}_1\otimes \pi_X^*\left(\oO_X(D)^{\otimes(n-1)}\otimes L\right)\right)\right) \\
&\quad -\int_{[M_{1}(X,[f])]^{\mathrm{vir}}}c_1\left(\dR\pi_{M\ast}\left(\mathbb{F}_1\otimes \pi_X^*\oO_X(D)^{\otimes(n-1)}\right)\right).  
\end{align*}
Here in the second equality we use the base-change \cite[Lemma 1.3]{BO}, in the third equality we denote $\mathrm{Ob}_{M_{n}(X,[f])}$ to be the obstruction bundle of $M_{n}(X,[f])$ and use Lemma \ref{lem on PT vir class ell fib}, in the fourth equality we use the fact that the obstruction bundle is rank three and $M_{n}(X,[f])\cong X$ (Lemma \ref{lem:fib:isom} and Lemma \ref{lem identify Mn and M1}), in the fifth equality we use the relation in the cohomology ring of the projective bundle 
$P_{n}(X,[f])=\mathbb{P}(\pi_{M\ast}\mathbb{F})$, in the final equality we denote $\mathbb{F}_1$ to be a universal sheaf of $M_{1}(X,[f])$ and 
use Lemma \ref{lem identify Mn and M1}.

The RHS of the above last equality is independent of the choice of $\mathbb{F}_1\to M_{1}(X,[f])\times X$, we may take it to be the normalized one $\mathbb{F}_{\mathrm{norm}}$, i.e. 
$\det\left(\dR\pi_{M\ast}(\mathbb{F}_{\mathrm{norm}})\right)\cong \oO$. Then 
\begin{align*}
&\quad \, \int_{[M_{1}(X,[f])]^{\mathrm{vir}}}c_1\left(\dR\pi_{M\ast}\left(\mathbb{F}_{\mathrm{norm}}\otimes \pi_X^*\left(\oO_X(D)^{\otimes(n-1)}\otimes L\right)\right)\right) \\
&=\int_{[M_{1}(X,[f])]^{\mathrm{vir}}}\pi_{M\ast}\left(\ch_4(\mathbb{F}_{\mathrm{norm}})\cdot\pi_{X}^*\left(c_1(L)+(n-1)D\right)\right)
+\int_{[M_{1}(X,[f])]^{\mathrm{vir}}}\pi_{M\ast}\left(\ch_5(\mathbb{F}_{\mathrm{norm}})\right)  \\
&\quad +\int_{[M_{1}(X,[f])]^{\mathrm{vir}}}\pi_{M\ast}\left(\ch_3(\mathbb{F}_{\mathrm{norm}})\cdot\pi_{X}^*\left(\frac{(c_1(L)+(n-1)D)^2}{2}+\frac{1}{12}c_2(X)\right)\right). 
\end{align*}
Take $L=\oO_X(\pi^{-1}S)$, $\oO_X$, and consider the difference:
\begin{align*}
&\quad \,\, \int_{[M_{1}(X,[f])]^{\mathrm{vir}}}c_1\left(\dR\pi_{M\ast}\left(\mathbb{F}_{\mathrm{norm}}\otimes \pi_X^*\left(\oO_X(D)^{\otimes(n-1)}\otimes \oO_X(\pi^{-1}S)\right)\right)\right) \\
&\quad \, -\int_{[M_{1}(X,[f])]^{\mathrm{vir}}}c_1\left(\dR\pi_{M\ast}\left(\mathbb{F}_{\mathrm{norm}}\otimes \pi_X^*\left(\oO_X(D)^{\otimes(n-1)}\right)\right)\right) \\
&=\int_{[M_{1}(X,[f])]^{\mathrm{vir}}}\left[\pi_{M\ast}\left(\ch_4(\mathbb{F}_{\mathrm{norm}})\cdot\pi_{X}^*\left(lB\right)\right)+\pi_{M\ast}\left(\ch_3(\mathbb{F}_{\mathrm{norm}})\cdot\pi_{X}^*\left(\frac{(lB)^2+2(n-1)lB\cdot D}{2}\right)\right)\right] \\
&=\pm(960\,l+960(n-1)l)=\pm 960\,nl,
\end{align*}
where $B,D$ are the divisors in \eqref{div:BE} and in the last equality use the previous computations of primary and descendent invariants \cite[Proposition 2.3]{CMT1}, \cite[Proposition 2.7]{CT2}.
\end{proof}
 
\subsection{$\mathrm{CY_3}\times E$}
Let $X=Y\times E$ be the product of a smooth projective CY 3-fold $Y$ with an elliptic curve $E$.
This is a trivial elliptic fibration over $Y$. 
For multiple (elliptic) fiber classes, 
we have similar results as Proposition \ref{prop on JS inv ell fib} and Proposition \ref{prop on PT inv ell fib}.
Take $X$ as a trivial CY 3-fold fibration to $E$ and consider curve classes from fibers, we recall the following:
\begin{lem}\label{prop on pair moduli on CY3}\emph{(\cite[Proposition 2.11]{CMT2})}
For an irreducible curve class $\beta\in H_2(Y,\mathbb{Z})\subseteq H_2(X,\mathbb{Z})$, we have an isomorphism 
\begin{equation}P_n(X,\beta)\cong P_n(Y,\beta)\times E. \nonumber \end{equation}
The virtual class of $P_n(X,\beta)$ satisfies 
\begin{equation}[P_n(X,\beta)]^{\mathrm{vir}}=[P_n(Y,\beta)]_{\mathrm{pair}}^{\mathrm{vir}}\otimes [E], \nonumber \end{equation}
for certain choice of orientation in defining the LHS.
Here $[P_n(Y,\beta)]_{\mathrm{pair}}^{\mathrm{vir}}\in A_{n-1}(P_n(Y,\beta),\mathbb{Z})$ is the virtual class defined using 
a truncation of the pair deformation obstruction theory.
\end{lem}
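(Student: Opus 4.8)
The plan is to prove the scheme isomorphism first and then the virtual-class identity, fixing the orientation at the very end to absorb the sign.

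\textbf{Moduli isomorphism.} The geometric input is that an irreducible class $\beta\in H_2(Y,\mathbb{Z})$ is a fibre class for $\pi\colon X=Y\times E\to E$, i.e. $\pi_*\beta=0$. I would first show that any PT-stable pair $(\oO_X\to F)$ with $[F]=\beta$ has $F$ scheme-theoretically supported on a single fibre $Y_e:=Y\times\{e\}$: since $F$ is pure one-dimensional with irreducible support class and $\pi_*\beta=0$, its support is contracted to a point $e\in E$ (the argument of \cite[Lemma 2.2]{CMT1} applies verbatim). This defines $\rho\colon P_n(X,\beta)\to E$, $(\oO_X\to F)\mapsto e$. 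Via the identification $i_e\colon Y\cong Y_e\hookrightarrow X$, a pair supported on $Y_e$ is exactly a pair $J=(\oO_Y\to G)$ on $Y$ together with the point $e$; carrying this out for an arbitrary test family (a $T$-flat family on $X$ supported on the graph of some $T\to E$, pushed forward from $Y\times T$) identifies the moduli functors and yields $P_n(X,\beta)\cong P_n(Y,\beta)\times E$, with inverse given by the graph embedding.

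\textbf{Obstruction theory via K\"unneth.} Fix $I=(\oO_X\to F)$ with $F=i_{e*}G=G\boxtimes\oO_e$ and section $s=s_Y\boxtimes\mathrm{ev}_e$, where $\oO_e$ is the skyscraper at $e$ and $\mathrm{ev}_e\colon\oO_E\to\oO_e$. The elliptic-curve building blocks are
\begin{align*}
\RHom_E(\oO_E,\oO_E)&=\mathbb{C}\oplus\mathbb{C}[-1],\qquad \RHom_E(\oO_e,\oO_E)=\mathbb{C}[-1],\\
\RHom_E(\oO_E,\oO_e)&=\mathbb{C},\qquad \RHom_E(\oO_e,\oO_e)=\mathbb{C}\oplus\mathbb{C}[-1].
\end{align*}
Feeding these into $\RHom_X(A\boxtimes A',B\boxtimes B')=\RHom_Y(A,B)\otimes\RHom_E(A',B')$ and into the two triangles that express $\RHom_X(I,I)$ through the four blocks $\RHom_X(\oO_X,\oO_X)$, $\RHom_X(\oO_X,F)$, $\RHom_X(F,\oO_X)$, $\RHom_X(F,F)$, I would track the action of $s=s_Y\boxtimes\mathrm{ev}_e$. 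The essential subtlety is that $\mathrm{ev}_e$ inserts the $H^1(\oO_E)$ direction asymmetrically: on $\RHom_E(\oO_e,\oO_E)$ and on the section block $\RHom_E(\oO_E,\oO_e)$ it meets the degree-$0$ and degree-$1$ parts differently. Consequently $\RHom_X(I,I)_0$ is \emph{not} $\RHom_Y(J,J)_0\otimes\RHom_E(\oO_E,\oO_E)$ --- a quick Euler-characteristic check, $\chi_X(I,I)_0=-2n$ against $0$ for the naive tensor product, already excludes this.

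\textbf{The decomposition and the virtual class.} The target of the computation is the clean picture
\begin{align*}
\Ext^1_X(I,I)_0&\cong \mathrm{Def}_Y\oplus H^1(\oO_E),\\
\Ext^2_X(I,I)_0&\cong \mathrm{Ob}_Y\oplus \mathrm{Ob}_Y^{\vee},
\end{align*}
where $(\mathrm{Def}_Y,\mathrm{Ob}_Y)$ is the truncated pair deformation-obstruction theory of $P_n(Y,\beta)$ from the statement (virtual dimension $n-1$, different from \cite{PT}), $H^1(\oO_E)=T_eE$ is the extra tangent direction along $E$, and the self-dual form on $\Ext^2_X$ --- coming from $K_X\cong\oO_X$ --- is the hyperbolic pairing between $\mathrm{Ob}_Y$ and $\mathrm{Ob}_Y^\vee$. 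This numerology is forced: $2\,\mathrm{ext}^1_{X,0}-\mathrm{ext}^2_{X,0}=2n$, $\dim\mathrm{Def}_Y-\dim\mathrm{Ob}_Y=n-1$, and $\dim\mathrm{Def}_Y+1=\mathrm{ext}^1_{X,0}$ together give $\mathrm{ext}^2_{X,0}=2\dim\mathrm{Ob}_Y$, i.e. the obstruction space doubles. Relativising over $P_n(Y,\beta)\times E$, the $\mathrm{DT}_4$ self-dual obstruction theory is then the pullback of the $Y$-pair theory, with $\mathrm{Ob}_Y\subset\mathrm{Ob}_Y\oplus\mathrm{Ob}_Y^\vee$ a maximal isotropic; the square-root Euler class of the hyperbolic obstruction bundle therefore equals $e(\mathrm{Ob}_Y)$, while $E$ enters only as a smooth free factor contributing $[E]$. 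This yields $[P_n(X,\beta)]^{\mathrm{vir}}=[P_n(Y,\beta)]^{\mathrm{vir}}_{\mathrm{pair}}\otimes[E]$ in the sense of \cite{BJ, OT}, after choosing the orientation on $X$ so that the isotropic half is $\mathrm{Ob}_Y$ with the correct sign.

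\textbf{Main obstacle.} The hard part is exactly this last identification at the level of the self-dual obstruction theory: one must verify that the symmetric form on $\Ext^2_X(I,I)_0$ splits as the hyperbolic pairing $\mathrm{Ob}_Y\perp\mathrm{Ob}_Y^\vee$ induced from Serre duality on the $\mathrm{CY}_3$ factor, that the chosen square root (orientation) is compatible with this isotropic splitting, and that both the splitting and the orientation are consistent in families, so that multiplicativity of the square-root Euler class applies. Reconciling the $\mathrm{DT}_4$ square-root construction with the non-symmetric truncated pair theory on $Y$, and pinning down the admissible orientation, is the delicate point of the proof.
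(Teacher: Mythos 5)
Note first that the paper you were given does not actually prove this lemma: it is imported verbatim from \cite[Proposition 2.11]{CMT2}, so the relevant comparison is with the proof in that reference. Your proposal follows essentially the same route as that proof: (i) for an irreducible fiber class every stable-pair sheaf is scheme-theoretically supported on a single fiber $Y\times\{e\}$, giving the product description of the moduli space; (ii) a K\"unneth computation of $\RHom_X(I,I)_0$ with $F=G\boxtimes\oO_e$ and $s=s_Y\boxtimes\mathrm{ev}_e$, yielding $\Ext^1_X(I,I)_0\cong\Hom_Y(I_Y,G)\oplus H^1(\oO_E)$ and $\Ext^2_X(I,I)_0\cong\Ext^1_Y(I_Y,G)\oplus\Ext^1_Y(I_Y,G)^{\vee}$; (iii) the hyperbolic structure of the Serre-duality form on $\Ext^2_X(I,I)_0$, so that the $\mathrm{DT}_4$ class reduces to the Euler class of the truncated pair obstruction theory on $Y$, times $[E]$, up to the orientation choice. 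Two corrections of emphasis. The decomposition in (ii) is not ``forced'' by your dimension count --- numerology is only a consistency check, and the decomposition must be (and is) extracted from the K\"unneth computation you set up, where the asymmetric action of $\mathrm{ev}_e$ on the elliptic-curve factors is exactly what undoubles the deformation space and doubles the obstruction space. Conversely, the step you flag as the main obstacle is less delicate than you fear: isotropy of each summand of $\Ext^2_X(I,I)_0$ follows from a short K\"unneth-degree argument, namely that the trace pairing takes values in $H^4(X,\oO_X)\cong H^3(\oO_Y)\otimes H^1(\oO_E)$, which lies in $E$-degree one, while the product of two classes of $E$-degree zero (resp.\ two of $E$-degree one) has no component there; the family and orientation statements then follow from the relative version of the same decomposition, which is how the cited proof concludes.
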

Having this, it is easy to see:
\begin{lem} 
Let $\beta\in H_2(Y,\mathbb{Z})\subseteq H_2(X,\mathbb{Z})$ be an irreducible curve class, $n\geqslant 1$ and $L=\oO_X(Y\times \{p_1,\ldots,p_l\})$. Then
Conjecture \ref{main conj} holds in PT chamber for $\beta,n,L$ if and only if:
\begin{align}\label{equ on 3-fold pair vir class}\int_{[P_n(Y,\beta)]_{\mathrm{pair}}^{\mathrm{vir}}}c_{n-1}\left(\pi_{P\ast}\mathbb{F}\right)=\pm\,n\cdot n_{0,\beta}(Y), 
\end{align}
where $(\oO_{P_n(Y,\beta)\times Y}\to \mathbb{F})$ is the universal pair and $\pi_P \colon P_n(Y,\beta)\times Y\to P_n(Y,\beta)$ is the projection.
\end{lem}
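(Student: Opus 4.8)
The plan is to compute both sides of the identity in Conjecture \ref{main conj} restricted to the single monomial $q^ny^\beta$, and to show that equality there is literally equivalent to \eqref{equ on 3-fold pair vir class}. Because $\beta$ is irreducible, $y^\beta$ appears only linearly on each side, so the extraction of coefficients is unambiguous. First I would compute the left-hand side. Under the product isomorphism $P_n(X,\beta)\cong P_n(Y,\beta)\times E$ of Lemma \ref{prop on pair moduli on CY3}, writing the two elliptic factors as $E_1$ (from the moduli) and $E_2$ (from $X=Y\times E_2$), a stable pair in class $\beta\in H_2(Y)$ is scheme-theoretically supported on a fiber $Y\times\{e\}$. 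Hence the universal sheaf $\mathbb{F}_X$ on $P_n(X,\beta)\times X$ factors as $\mathbb{F}_X=\mathbb{F}\boxtimes\oO_{\Delta}$, where $\mathbb{F}$ is the universal sheaf on $P_n(Y,\beta)\times Y$ appearing in the statement and $\Delta\subset E_1\times E_2$ is the diagonal. Since $L=\oO_X(D)=\pi^{\ast}\oO_E(p_1+\cdots+p_l)$ is pulled back from $E_2$, pushing forward along $\pi_P$ (integrating out $Y$ and $E_2$, the diagonal identifying $E_2\cong E_1$) gives
$$L^{[n]}\cong \pi_1^{\ast}\big(\dR\pi_{P\ast}\mathbb{F}\big)\otimes \pi_2^{\ast}\oO_E(p_1+\cdots+p_l),$$
a rank $n$ $K$-theory class on $P_n(Y,\beta)\times E$.

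Second, I would extract the Euler class. As $E$ is a curve, $c_1(\oO_E(\sum_j p_j))^2=0$, so tensoring a rank $n$ class by this line bundle gives
$$e(L^{[n]})=\pi_1^{\ast}c_n(\pi_{P\ast}\mathbb{F})+\pi_1^{\ast}c_{n-1}(\pi_{P\ast}\mathbb{F})\cup\pi_2^{\ast}c_1\big(\oO_E(\textstyle\sum_j p_j)\big).$$
Capping against $[P_n(Y,\beta)]_{\mathrm{pair}}^{\mathrm{vir}}\otimes[E]$, the first term vanishes (it carries no $E$-direction while $[E]$ is one-dimensional) and the second contributes $\deg\oO_E(\sum_j p_j)=l$ from the elliptic factor, so that $P_{n,\beta}(L)=l\int_{[P_n(Y,\beta)]_{\mathrm{pair}}^{\mathrm{vir}}}c_{n-1}(\pi_{P\ast}\mathbb{F})$.

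Third, I would compute the right-hand coefficient. Here $D=Y\times\{p_1,\ldots,p_l\}$ is $l$ disjoint copies of $Y$, so the classes $\beta_D\in H_2(D)$ with $i_{\ast}\beta_D=\beta$ are precisely $\beta$ placed in one of the $l$ copies, each with $n_{0,\beta_D}(D)=n_{0,\beta}(Y)$. Taking logarithms and using the irreducibility of $\beta$, the $y^\beta$-coefficient of the first product is $l\,n_{0,\beta}(Y)\,\frac{q}{(1+q)^2}$, while the MacMahon factor contributes only the $q$-independent constant $n_{1,\beta}(X)$. Since $\frac{q}{(1+q)^2}=\sum_{m\geqslant1}(-1)^{m-1}m\,q^m$ and the MacMahon term lives in degree $q^0$, the coefficient of $q^ny^\beta$ on the right for $n\geqslant1$ equals $(-1)^{n-1}\,l\,n\,n_{0,\beta}(Y)$.

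Finally, comparing the two, the conjecture at $q^ny^\beta$ becomes $l\int_{[P_n(Y,\beta)]_{\mathrm{pair}}^{\mathrm{vir}}}c_{n-1}(\pi_{P\ast}\mathbb{F})=(-1)^{n-1}ln\,n_{0,\beta}(Y)$; cancelling $l$ (for nonempty $D$) yields exactly \eqref{equ on 3-fold pair vir class}, the deterministic sign $(-1)^{n-1}$ being absorbed into the $\pm$ along with the orientation ambiguity of the splitting in Lemma \ref{prop on pair moduli on CY3}. The step I expect to be the main obstacle is the precise identification of $L^{[n]}$ as the box product above: one must justify the derived base-change and projection-formula manipulations for the pushforward along the diagonal, and track which orientation on $[P_n(X,\beta)]^{\mathrm{vir}}$ realizes the splitting $[P_n(Y,\beta)]_{\mathrm{pair}}^{\mathrm{vir}}\otimes[E]$, so that the only surviving ambiguity is the single overall sign $\pm$.
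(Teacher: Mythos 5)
Your proposal is correct and follows exactly the route the paper intends: the paper states this lemma as an immediate consequence of Lemma \ref{prop on pair moduli on CY3} (``Having this, it is easy to see''), and your argument fills in precisely those implicit steps --- the splitting $P_n(X,\beta)\cong P_n(Y,\beta)\times E$ with $[P_n(X,\beta)]^{\mathrm{vir}}=[P_n(Y,\beta)]_{\mathrm{pair}}^{\mathrm{vir}}\otimes [E]$, the identification $L^{[n]}\cong \pi_1^{\ast}(\dR\pi_{P\ast}\mathbb{F})\otimes\pi_2^{\ast}\oO_E(\sum_j p_j)$, the expansion of $e(L^{[n]})$ using $c_1^2=0$ on $E$, and the coefficient extraction $(-1)^{n-1}l\,n\,n_{0,\beta}(Y)$ from the right-hand side of Conjecture \ref{main conj}. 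The sign and orientation bookkeeping (absorbing $(-1)^{n-1}$ and the orientation ambiguity into $\pm$, cancelling $l\geqslant 1$) is also handled correctly, so there is nothing to add.
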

We prove the required equality on CY 3-folds by Manolache's virtual push-forward formula.
\begin{thm}\label{thm on prod of CY3}
The equality \eqref{equ on 3-fold pair vir class} holds 
for any CY 3-fold $Y$ if there is an ample divisor $H$ on $Y$ such that $\mathrm{g.c.d}(n,H\cdot\beta)=1$. 
\end{thm}
\begin{proof}
Let $M_{n}(Y, \beta)$ be the coarse moduli space of 
one dimensional stable sheaves $F$ on $Y$ with $[F]=\beta$ and $\chi(F)=n$. Since $\beta$ is irreducible, we have (e.g. \cite[Lemma 2.1]{Toda3}):
$$n_{0,\beta}(Y)=\int_{[M_{n}(Y, \beta)]^{\mathrm{vir}}}1. $$
There is a forgetful morphism 
\begin{align*}g: P_n(Y,\beta)\to M_{n}(Y,\beta), \quad (\oO_X \stackrel{s}{\to} F) \mapsto F,  \end{align*}
whose fiber over $F$ is $\mathbb{P}(H^0(Y,F))$.
By the assumption, there is a universal sheaf $\mathbb{F} \to M_{n}(Y,\beta) \times Y$.
The map $g$ identifies $P_n(Y, \beta)$ with 
$\mathbb{P}(\pi_{M\ast}\mathbb{F})$,
where $\pi_M \colon M_{n}(Y, \beta) \times Y \to M_{n}(Y, \beta)$
is the projection. The universal stable pair is then given by 
\begin{align*}
\mathbb{I}=
(\mathcal{O}_{Y\times P_n(Y, \beta)} 
\stackrel{s}{\rightarrow} \mathbb{F}^{\dag}), \quad
\mathbb{F}^{\dag}:= (\id_Y \times f)^{\ast}\mathbb{F}\otimes \oO(1),
\end{align*} 
where $\oO(1)$ is the tautological line bundle on 
$\mathbb{P}(\pi_{M\ast}\mathbb{F})$
and $s$ is the tautological map.  

As in the proof of \cite[Proposition 2.10]{CMT2}, the map $g$
has a relative perfect obstruction theory and satisfies Manolache's virtual push-forward formula: 
$$g_*\big(c_{n-1}(\pi_{P\ast}\mathbb{F})\cdot [P_n(Y,\beta)]_{\mathrm{pair}}^{\mathrm{vir}}\big)=c\cdot [M_{n}(Y, \beta)]^{\mathrm{vir}}, $$
for each connected component of $M_{n}(Y, \beta)$. 
To fix $c$, it is enough to restrict the relative obstruction theory to a fiber of $g$ (ref. \cite[pp. 2022, (18)]{Man}). The obstruction bundle 
over $\mathbb{P}(H^0(Y,F))$ is $H^1(Y,F)\otimes \oO(1)$. Adding the tautological insertion, we have 
\begin{align*}c&=\int_{\mathbb{P}(H^0(Y,F))}e(H^1(Y,F)\otimes \oO(1))\cdot c_{n-1}(\chi(Y,F)\otimes \oO(1)) \\
&= \int_{\mathbb{P}(H^0(Y,F))}H^{h^1(Y,F)}\cdot \frac{(1+H)^{h^0(Y,F)}}{(1+H)^{h^1(Y,F)}}\bigg|_{\deg (n-1)} \\
&= \int_{\mathbb{P}(H^0(Y,F))}H^{h^1(Y,F)}\cdot (1+H)^{n}|_{\deg (n-1)} \\
&= n\cdot \int_{\mathbb{P}(H^0(Y,F))}H^{h^1(Y,F)}\cdot H^{n-1}=n,
\end{align*}
where $H$ denotes the hyperplane class of $\mathbb{P}(H^0(Y,F))$.
Therefore,
\begin{align*}\int_{[P_n(Y,\beta)]_{\mathrm{pair}}^{\mathrm{vir}}}c_{n-1}\left(\pi_{P\ast}\mathbb{F}\right)=n\cdot \int_{[M_{n}(Y, \beta)]^{\mathrm{vir}}}1=
n\cdot n_{0,\beta}(Y). \quad \quad \quad   \qedhere   \end{align*} 
\end{proof}
\begin{rmk}\label{rmk on js on prod}
By the same argument, one can similarly show Conjecture \ref{main conj} holds in JS chamber for $n=1$, any 
$\beta\in H_2(Y,\mathbb{Z})\subseteq H_2(X,\mathbb{Z})$ and $L=\oO_X(Y\times \{p_1,\ldots,p_l\})$.
\end{rmk}

\subsection{Quintic fibrations}
Let $X\hookrightarrow \mathbb{P}^{1}\times \mathbb{P}^{4}$ be a general $(2,5)$ hypersurface. By the projection to $\mathbb{P}^{1}$, it admits a quintic 3-fold fibration:
\begin{equation}\label{qui fib}\pi \colon X\rightarrow \mathbb{P}^{1},  \end{equation}
i.e. $\pi$ is a proper morphism whose general fiber is a smooth quintic 3-fold $Y\subseteq\mathbb{P}^{4}$.
We consider an irreducible curve class $\beta\in H_2(X,\mathbb{Z})$ such that $\pi_*\beta=0$, which is the class of a line $l$ in a generic quintic fiber.
\begin{lem}\label{lem on quin fib}
For a generic $(2,5)$ hypersurface $X\hookrightarrow \mathbb{P}^{1}\times \mathbb{P}^{4}$, 
the Hilbert scheme $\Hilb(X,[l])$ of lines $\{p\}\times l\subset  \mathbb{P}^{1}\times \mathbb{P}^{4}$ which sits in $X$ is smooth of one dimension.
\end{lem}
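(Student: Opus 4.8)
The plan is to realize $\Hilb(X,[l])$ as a fiber of a projection from an incidence variety and then apply generic smoothness, so that the only genuine geometric input is a single normal bundle computation. Write $P=\mathbb{P}^1\times \mathbb{P}^4$ and $V=H^0(P,\oO_P(2,5))$, so that hypersurfaces of bidegree $(2,5)$ are parametrized by $\mathbb{P}(V)$, and set $N=\dim V-1$. Lines of the form $\{p\}\times l$ are parametrized by $\Sigma:=\mathbb{P}^1\times \mathbb{G}$, where $\mathbb{G}$ is the Grassmannian of lines in $\mathbb{P}^4$, so $\dim\Sigma=1+6=7$; for $\sigma\in\Sigma$ write $C_\sigma=\{p\}\times l$ for the corresponding line. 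First I would form the incidence variety
\[
\mathcal{I}=\{(\sigma,[F])\in\Sigma\times\mathbb{P}(V)\ :\ C_\sigma\subset \{F=0\}\},
\]
with projections $\mathrm{pr}_\Sigma$ and $q$ to the two factors. Since the fiber of $q$ over $[F]$ is exactly $\Hilb(X,[l])$ for $X=\{F=0\}$, it suffices to show that for generic $[F]$ this fiber is smooth of dimension one.

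Next I would check that $\mathcal{I}$ is a projective subbundle over $\Sigma$. For $\sigma=(p,l)$ we have $C_\sigma\cong\mathbb{P}^1$ and $\oO_P(2,5)|_{C_\sigma}\cong\oO_{\mathbb{P}^1}(5)$, and the restriction map $V\to H^0(C_\sigma,\oO(5))$ is surjective: by Künneth $V=H^0(\mathbb{P}^1,\oO(2))\otimes H^0(\mathbb{P}^4,\oO(5))$, and restriction is the tensor product of the (surjective) evaluation at $p$ with the (surjective) restriction of quintic forms to $l$. Hence the condition $C_\sigma\subset\{F=0\}$ is linear of codimension $h^0(\oO(5))=6$, independent of $\sigma$, so $\mathcal{I}\to\Sigma$ is a $\mathbb{P}^{N-6}$-bundle and $\mathcal{I}$ is smooth and irreducible of dimension $7+(N-6)=N+1$. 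By generic smoothness in characteristic zero, once $q$ is dominant there is a dense open $U\subset\mathbb{P}(V)$ over which $q$ is smooth; as $q$ is proper and dominant it is surjective, so for $[F]\in U$ the fiber $\Hilb(X,[l])$ is nonempty and smooth of dimension $\dim\mathcal{I}-\dim\mathbb{P}(V)=1$. Thus everything reduces to producing one point $(\sigma_0,[F_0])\in\mathcal{I}$ at which $dq$ is surjective.

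The hard part is this last step, which is a normal bundle computation. At a point where $X_0=\{F_0=0\}$ is smooth along $C_0:=C_{\sigma_0}$, the kernel of $dq$ is the tangent space $T_{[C_0]}\Hilb(X_0,[l])=H^0(C_0,N_{C_0/X_0})$, so $dq$ is surjective iff $h^0(N_{C_0/X_0})=1$. Since $K_{X_0}\cong\oO_{X_0}$, adjunction gives $\det N_{C_0/X_0}=\omega_{C_0}=\oO(-2)$, whence $\chi(N_{C_0/X_0})=-2+3=1$, so surjectivity of $dq$ is equivalent to $H^1(N_{C_0/X_0})=0$. I would compute this from the normal bundle sequence
\[
0\to N_{C_0/X_0}\to N_{C_0/P}\to \oO(5)\to 0,\qquad N_{C_0/P}\cong \oO\oplus\oO(1)^{\oplus 3}.
\]
Taking cohomology and using $H^1(\oO\oplus\oO(1)^{\oplus 3})=0$ identifies $H^1(N_{C_0/X_0})$ with the cokernel of the map $H^0(\oO\oplus\oO(1)^{\oplus 3})=\mathbb{C}^7\to H^0(\oO(5))=\mathbb{C}^6$ induced by the partial derivatives of $F_0$ transverse to $C_0$. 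Choosing $F_0$ so that these three transverse partials restrict to $x_0^4,\ x_1^4,\ x_0^2x_1^2$ on $C_0\cong\mathbb{P}^1$ (e.g. including the monomials $s^2x_2x_0^4,\ s^2x_3x_1^4,\ s^2x_4x_0^2x_1^2$ in $F_0$, which vanish on $C_0$), the products by $x_0$ and $x_1$ already yield all six degree-five monomials, so the map is surjective and $H^1(N_{C_0/X_0})=0$; the same choice makes the bundle map $N_{C_0/P}\to\oO(5)$ surjective along $C_0$, so $X_0$ is smooth there and the sequence is exact. This produces the good point, hence $q$ is dominant, and the generic smoothness argument above then finishes the proof. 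I expect the formal framework to be routine and the only real content to be confirming this $H^1$-vanishing, i.e. the surjectivity of the transverse-derivative (syzygy) map.
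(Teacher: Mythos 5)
Your proof is correct and follows essentially the same route as the paper: the same incidence variety $\mathcal{I}\subset(\mathbb{P}^1\times\mathrm{Gr}(2,5))\times\mathbb{P}(V)$, shown to be smooth and irreducible of dimension $378$ as a projective bundle via surjectivity of the restriction map, followed by generic smoothness of the second projection (your $\dim\mathbb{P}(V)=377$ also corrects the paper's typo ``$\mathbb{P}^{491}$''). The only difference is that you explicitly verify dominance of $q$ by exhibiting a point where $dq$ is surjective --- i.e.\ the computation $H^1(N_{C_0/X_0})=0$ via the transverse partials $x_0^4,\,x_1^4,\,x_0^2x_1^2$ --- a step the paper subsumes in its citation of Koll\'ar's standard argument, so your write-up is a correct and slightly more self-contained version of the same proof.
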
 
\begin{proof}
This is a standard argument following \cite[Theorem 4.3, pp. 266]{Kol} (see also \cite[Proposition 1.4]{Caoconic}). 
Let $G:=\mathbb{P}^{1}\times \mathrm{Gr}(2,5)$ denote the Hilbert scheme of lines $\{p\}\times l\subset  \mathbb{P}^{1}\times \mathbb{P}^{4}$,
and $P:=\mathbb{P}(H^0(\mathbb{P}^{1}\times \mathbb{P}^{4},\oO(2,5)))=\mathbb{P}^{491}$ be the projective space of all 
$(2,5)$ hypersurfaces in $\mathbb{P}^{1}\times \mathbb{P}^{4}$. Consider the incident subscheme
$$\mathcal{I}:=\{(C,X)\in G\times P:\, C\subset X\}. $$
For projection $\pi_1: \mathcal{I}\to G$, the surjectivity of the restriction map 
$$H^0(\mathbb{P}^{1}\times \mathbb{P}^{4},\oO(2,5))\to H^0(C,\oO(2,5)|_C) $$
shows $\mathcal{I}$ is irreducible smooth of dimension $378$. Consider the second projection $\pi_2: \mathcal{I}\to P$, by the 
generic smoothness, there exists a non-empty open subset $U\subset P$ such that a fiber of $\pi_2|_{\pi_2^{-1}(U)}$
is smooth of dimension one.
\end{proof}
\begin{prop}\label{prop on quintic fib}
Let $Y=\pi^{-1}(p)$ be a general fiber and $L=\oO_{X}(Y)$.
Then for a choice of orientation, we have 
\begin{align*}P_{n,[l]}(L)=(-1)^{n+1}\,n\cdot n_{0,[l]}(Y)=(-1)^{n+1}\,2875\,n,  \quad \forall \,\, n\geqslant 0, \end{align*}
i.e. Conjecture \ref{main conj} holds in PT chamber for this case.
\end{prop}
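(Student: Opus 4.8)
The plan is to adapt the argument of Proposition \ref{prop on PT inv ell fib}: I would pass from the pair space to the moduli of one dimensional sheaves, exploit the resulting projective bundle structure, and evaluate the tautological integral as a Chern number on the curve of lines. Since $[l]$ is irreducible and $\pi_*[l]=0$, any PT stable pair $(\oO_X\xrightarrow{s}F)$ with $([F],\chi(F))=([l],n)$ has $F$ scheme theoretically supported on a single line $l$ inside a fibre $Y=\pi^{-1}(p)$, so $F\cong\oO_l(n-1)$ and the forgetful map identifies $M_n(X,[l])$ with the Hilbert scheme $C:=\Hilb(X,[l])$ of lines, which is smooth of dimension one by Lemma \ref{lem on quin fib}. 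First I would record $N_{l/X}\cong\oO(-1)^{\oplus 2}\oplus\oO$, obtained from $N_{l/Y}\cong\oO(-1)^{\oplus 2}$ and $N_{Y/X}|_l\cong\oO_X(Y)|_l\cong\oO_l$; the local-to-global computation $\Ext^k_X(F,F)=\bigoplus_{p+q=k}H^p(l,\wedge^q N_{l/X})$ then yields $\Ext^2_X(F,F)=0$, so $M_n(X,[l])$ is smooth and unobstructed and $[M_n(X,[l])]^{\mathrm{vir}}=\pm[C]$. The case $n=0$ is immediate, as $\oO_l(-1)$ has no sections, so $P_0(X,[l])=\emptyset$ and both sides vanish.

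Next I would set up the projective bundle. Because $h^0(\oO_l(n-1))=n$ and $h^1(\oO_l(n-1))=0$, the map $\psi\colon P_n(X,[l])\to M_n(X,[l])\cong C$ realises $P_n(X,[l])$ as $\mathbb{P}(\pi_{M\ast}\mathbb{F})$, a $\mathbb{P}^{n-1}$-bundle, for a universal sheaf $\mathbb{F}$ on $C\times X$. As in Lemma \ref{lem on PT vir class ell fib}, the vanishing $H^1(F)=0$ together with the triangles relating $\RHom_X(I,I)_0$, $\RHom_X(I,F)$ and $\RHom_X(F,F)$ identifies the pair obstruction theory with the pullback of that of $M_n(X,[l])$, so that $[P_n(X,[l])]^{\mathrm{vir}}=\psi^*[M_n(X,[l])]^{\mathrm{vir}}=\pm[P_n(X,[l])]$ for a choice of orientation. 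Writing $\mathbb{F}^{\dag}=\bar\psi^*\mathbb{F}\otimes\oO(1)$ for the universal pair sheaf, base change gives $L^{[n]}=\psi^*V_L\otimes\oO(1)$, where $V_L:=\dR\pi_{M\ast}(\mathbb{F}\otimes\pi_X^*L)$ and $V:=\dR\pi_{M\ast}\mathbb{F}$ are vector bundles of rank $n$ on $C$.

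I would then evaluate the integral. Expanding $e(L^{[n]})=\sum_{j\geqslant 0}\psi^*c_j(V_L)\,h^{n-j}$ with $h=c_1(\oO(1))$, using $c_j(V_L)=0$ on $C$ for $j\geqslant 2$ and the Segre relations $\psi_*h^{n-1}=1$, $\psi_*h^{n}=-c_1(V)$, the invariant reduces to $\pm\int_C\big(c_1(V_L)-c_1(V)\big)$. By Grothendieck-Riemann-Roch and $\ch(L)-1=[Y]$ (note $[Y]^2=0$ since $Y$ is a fibre of $\pi$) this becomes $\pm\int_{C\times X}\ch_4(\mathbb{F})\,\pi_X^*[Y]$. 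Taking a hyperplane $H$ of $\mathbb{P}^4$ with $H\cdot[l]=1$ and the twist $\mathbb{F}\cong\mathbb{F}_1\otimes\pi_X^*\oO_X((n-1)H)$ coming from $M_n(X,[l])\cong M_1(X,[l])$, the integral splits as $A+(n-1)B$: here $B=\int_{\mathcal{Z}}\pi_X^*(H\cdot[Y])$, with $\iota\colon\mathcal{Z}\hookrightarrow C\times X$ the universal line, is the total $H$-degree of the $2875$ lines in $Y$, so $B=n_{0,[l]}(Y)=2875$, while $A$ is computed from $\ch_4(\mathbb{F}_1)=\iota_*\big(\ch_1(\mathcal{L})-\tfrac{1}{2}c_1(N_{\mathcal{Z}/C\times X})\big)$ for $\mathbb{F}_1=\iota_*\mathcal{L}$, and evaluates to $2875$ using $\deg(\oO_l(0))=0$ and $c_1(N_{l/X})|_l=-2$ on each of the $2875$ lines. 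Hence the integral equals $\pm\,n\cdot 2875$.

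The genuinely delicate step, which I expect to be the main obstacle, is to upgrade this ambiguous sign to the asserted $(-1)^{n+1}$. Since $\Ext^2_X(I,I)_0=0$, the $\DT_4$ class of the smooth space $P_n(X,[l])$ is $\pm[P_n(X,[l])]$ with no square-root Euler class contribution, so the sign is purely a question of orientation and is invisible to the cohomological computation above. I would fix it by comparison with the local model: \'etale locally about each line the geometry is $\oO_{\mathbb{P}^1}(-1,-1,0)$ with divisor $\oO_{\mathbb{P}^1}(-1,-1)\times\{p\}$, where the Nagao-Nakajima evaluation $\prod_{k}(1-(-q)^ky)^k$ recalled in Section \ref{sect on ideal case} determines the orientation-dependent signs, the factor $(-q)^n$ being the source of $(-1)^{n+1}$. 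Matching this local sign to the global orientation, uniformly in $n$, is the crux; granting it, the coefficient of $q^ny^{[l]}$ in $\prod_k(1-(-q)^ky^{[l]})^{k\,n_{0,[l]}(Y)}$ is $(-1)^{n+1}n\,n_{0,[l]}(Y)=(-1)^{n+1}2875\,n$, confirming Conjecture \ref{main conj} in this case.
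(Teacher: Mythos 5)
Your computation is correct, but it takes a genuinely different route from the paper's. After the common setup (lines in fibers have $N_{l/X}\cong \oO_{\mathbb{P}^1}(-1,-1,0)$, the moduli space $M_n(X,[l])$ is identified with the smooth one-dimensional Hilbert scheme of lines, and $\psi\colon P_n(X,[l])\to M_n(X,[l])$ is a $\mathbb{P}^{n-1}$-bundle, so $P_n(X,[l])$ is smooth of dimension $n$ with virtual class $\pm$ the fundamental class), the paper evaluates the integral by the method of the ideal-geometry heuristic: it uses the tautological section $\sigma$ of $L^{[n]}$ from Lemma \ref{lem on tat sect}, whose zero locus is $P_n(Y,[l])$ (a disjoint union of $2875$ copies of $\mathbb{P}^{n-1}$), together with the identification $\mathrm{Coker}(d\sigma)\cong T_{P_n(Y,[l])}$ whose proof is deferred to (and is the same as) the proof of \eqref{claim on coker} in Theorem \ref{thm on integ for any C}; this gives $\int_{P_n(X,[l])}e(L^{[n]})=\chi\bigl(P_n(Y,[l])\bigr)=2875\,n$ in one stroke. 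You instead transplant the method of Proposition \ref{prop on PT inv ell fib}: Segre relations in the projective bundle, Grothendieck--Riemann--Roch (with $[Y]^2=0$ killing the $\ch_3$ terms), and an explicit evaluation of $\int_{C\times X}\ch_4(\mathbb{F})\,\pi_X^{\ast}[Y]$ on the universal line, arriving at $A+(n-1)B=2875+(n-1)\cdot 2875=2875\,n$. Your bookkeeping checks out (the Segre convention matches the one used in the paper's fifth equality in Proposition \ref{prop on PT inv ell fib}, and the degrees $\deg(\mathcal{L}|_l)=0$, $\deg\bigl(N_{\mathcal{Z}/C\times X}|_l\bigr)=-2$ are right). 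What your route buys is independence from the cokernel identification, and a treatment uniform with the elliptic fibration example; what the paper's route buys is brevity, the conceptual explanation of the answer as a topological Euler characteristic, and reuse of the argument that powers Theorem \ref{thm on integ for any C}.

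One correction: the step you single out as ``the genuinely delicate step'' and leave as a conditional (``granting it'') is not actually a gap. Both the Proposition and Conjecture \ref{main conj} are asserted only ``for a (certain) choice of orientation'', and orientations in $\mathrm{DT}_4$ theory are an independent $\pm$ choice on each connected component of each moduli space $P_n(X,[l])$. Since your computation pins the invariant down to $\pm\,2875\,n$ with the sign governed solely by that choice, one simply chooses, for each $n$, the orientation realizing $(-1)^{n+1}$. This is exactly what the paper does (``By choosing certain orientation of the virtual class, we can put sign $(-1)^{n+1}$ for the number''); no matching with the local resolved conifold model, uniform in $n$ or otherwise, is needed.
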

\begin{proof}
For a general quintic fiber $Y$, there are exactly 2875 lines $l$'s on it. For each $l$,  
$$N_{l/Y}\cong \oO_{\mathbb{P}^1}(-1,-1), \quad N_{l/X}\cong \oO_{\mathbb{P}^1}(-1,-1,0). $$
By Lemma \ref{lem on quin fib}, such lines deform to all fibers of $\pi$.
Let $M_n(X,[l])$ denote the moduli scheme of one dimensional stable sheaves $E$ with $[E]=[l]$ and $\chi(E)=n$.
Any such $E$ is scheme theoretically 
supported on a fiber of $\pi$, so $M_n(X,[l])$ has a fibration structure: 
$$M_n(X,[l])\to \mathbb{P}^1, $$
where fibers parametrize  $2875$ lines in fibers of $\pi$. 

As in \eqref{forget map 2}, by forgetting the section, we have a $\mathbb{P}^{n-1}$-bundle structure 
$$\psi: P_n(X,[l])\to M_n(X,[l]). $$
Therefore, $P_n(X,[l])$ is smooth of dimension $n$ whose virtual class
is the usual fundamental class for a choice of orientation.

The tautological bundle $L^{{[n]}}$ has a tautological section $\sigma$ whose zero locus is $P_n(Y,[l])$ (see Lemma \ref{lem on tat sect}).
We have a morphism 
of vector bundles
$$T_{P_{n}(X,[l])}|_{P_{n}(Y,[l])}\stackrel{d\sigma}{\to} L^{[n]}|_{P_{n}(Y,[l])}, $$
whose kernel is $T_{P_{n}(Y,[l])}$. We claim that 
\begin{align*}\mathrm{Coker}(d\sigma)\cong T_{P_{n}(Y,[l])}, \end{align*}
where the proof will be given later, following the proof of Theorem \ref{thm on integ for any C}. Therefore
\begin{align*}
\int_{P_n(X,[l])}e(L^{[n]})=\int_{P_n(Y,[l])}e(\mathrm{Coker}(d\sigma))=\chi(P_n(Y,[l]))=2875\chi(\mathbb{P}^{n-1})
=2875n.
\end{align*}
By choosing certain orientation of the virtual class, we can put sign $(-1)^{n+1}$ for the number.
\end{proof}

\subsection{Local resolved conifold}

For $X=\oO_{\mathbb{P}^1}(-1,-1,0)$, although moduli spaces $P^t_n(X,\beta)$ of $Z_t$-stable pairs are non-compact, one can define 
similar tautological invariants using torus localization \cite{CKM1, CT3}.
In \cite[Appendix B]{CKM1}, a similar closed formula as in Conjecture \ref{main conj} is conjectured in PT chamber and verified for certain orders by a
vertex calculation (see also \cite[Section 2.9]{CK2} for the case without insertions). 
In \cite{CT3}, we extend such a story to general $Z_t$-stable pairs (and also `non-commutative chamber'). 
In fact, this paper is motivated by the formula in loc. cit., which we want to understand from the perspective of (global) compact CY 4-folds.
In Section \ref{section JS pair on local P1}, we completely determine the invariants in JS chamber.

\section{Joyce-Song stable pairs on $\oO_{\mathbb{P}^1}(-1,-1)\times C$}\label{section JS pair on local P1}
Let $C$ be a smooth projective curve, and 
take $X$ to be
\begin{align}\label{XOC}
X=\oO_{\mathbb{P}^1}(-1,-1)\times C.
\end{align}
In this section, we completely determine
tautological JS stable pair invariants on $X$
and apply it to prove our previous conjecture~\cite[Conjecture~6.10]{CT1}
on equivariant JS stable pair invariants (without insertions) on 
local resolved conifold $\oO_{\mathbb{P}^1}(-1, -1, 0)$. 

\subsection{Tautological invariants}
Let $X$ be a 4-fold defined by (\ref{XOC}), 
and $\pi \colon X \to \mathbb{P}^1$ be the projection. 
For a compactly supported one dimensional sheaf $F$ on $X$,
its slope is defined by 
\begin{align*}
\mu(F) \cneq \frac{n}{d} \in \mathbb{Q}, \quad 
\left([\pi_{\ast}F], \chi(F)\right)=\left(d\,[\mathbb{P}^1], n\right).
\end{align*}
For $(d, n) \in \mathbb{Z}^2$,
we consider the moduli space of JS stable pairs: 
\begin{align*}
P_n^{\mathrm{JS}}(X, d)
=\big\{(F, s) : \mbox{it is a JS stable pair with }
([\pi_{\ast}F], \chi(F))=(d\,[\mathbb{P}^1], n)\big\}. 
\end{align*} 
Here in our non-compact setting, the 
JS stability for $(F, s)$
is defined by: 
\begin{itemize}
\item $F$ is a compactly supported one dimensional semistable sheaf, 
\item $s \neq 0$, and for any subsheaf $\Imm(s) \subset F' \subsetneq F$
we have $\mu(F')<\mu(F)$. 
\end{itemize}
The following lemma shows that $P^{\mathrm{JS}}_n(X, d)$ has a nice 
geometric property:
\begin{lem}\label{lem on smoothness of JS moduli}
The moduli space 
$P^{\mathrm{JS}}_n(X, d)$ is empty unless $n=(k+1)d$ for some $k\geqslant 0$. 
For $n=(k+1)d$, 
the moduli space 
$P^{\mathrm{JS}}_n(X,d)$ is a smooth projective variety of dimension $n$.
\end{lem}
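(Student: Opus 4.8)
The plan is to reduce the moduli problem to the projective surface $S:=\mathbb{P}^1\times C\subset X$ (the zero section of $\mathcal{O}_{\mathbb{P}^1}(-1,-1)$ times $C$), to pin down the sheaves $F$ completely, and then to run the pair deformation theory, where the point will be that obstructions vanish and the tangent space has constant dimension $n$.

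First I would establish the structure of $F$. By the same Koszul-resolution-plus-semistability argument as in Proposition \ref{lem on vec bdl on JS} --- the conormal bundle $N^{\vee}_{S/X}\cong\mathcal{O}_{\mathbb{P}^1}(1)^{\oplus2}$ has positive degree along the curve class, so tensoring raises the slope and $\Hom_X(F\otimes\pi^{\ast}\mathcal{O}(1),F)=0$ --- every semistable $F$ is scheme-theoretically supported on $S$. Since $F$ has curve class $d[\mathbb{P}^1]$, no component winds around $C$ (automatic when $g(C)\geq1$, as maps $\mathbb{P}^1\to C$ are then constant), so the support is finite of degree $d$ over $\mathbb{P}^1$ and horizontal, and $\pi_{\ast}F=:V$ is a rank-$d$ bundle on $\mathbb{P}^1$ with $\deg V=n-d$. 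Because the slope is measured by the $\mathbb{P}^1$-degree and $\pi|_{\operatorname{supp}F}$ is finite, $F$ is $\mu$-semistable iff $V$ is semistable on $\mathbb{P}^1$; as every bundle on $\mathbb{P}^1$ splits, semistability forces $V\cong\mathcal{O}_{\mathbb{P}^1}(k)^{\oplus d}$ with $k=(n-d)/d$. This gives the emptiness statement: $P^{\mathrm{JS}}_n(X,d)=\emptyset$ unless $d\mid n$, i.e. $n=(k+1)d$, and unless $k\geq0$, which is forced by $s\neq0$ since $H^0(F)=H^0(\mathbb{P}^1,V)=0$ for $k<0$. Finally such an $F$ has the form $F=\iota_{\ast}\big(\mathcal{O}_{\mathbb{P}^1}(k)\boxtimes T\big)$ for a length-$d$ sheaf $T$ on $C$, the $\mathcal{O}_C$-action being encoded through the identification $\End(\mathcal{O}(k)^{\oplus d})=\mathrm{Mat}_d$.

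Next I would compute the relevant groups. From $R\mathcal{H}om_X(\iota_{\ast}G,\iota_{\ast}G)=\iota_{\ast}\big(R\mathcal{H}om_S(G,G)\otimes\wedge^{\bullet}N_{S/X}[-\bullet]\big)$ together with the Künneth decomposition on $S=\mathbb{P}^1\times C$ and $H^{\bullet}(\mathbb{P}^1,\mathcal{O}(-1))=0$, the middle exterior power $\wedge^1N=\mathcal{O}(-1)^{\oplus2}$ contributes nothing, while the $\wedge^0$ and $\wedge^2=\mathcal{O}(-2)$ pieces feed into $\Ext^2_X(F,F)$ only through $H^1(\mathbb{P}^1,\mathcal{O})=0$ and $H^0(\mathbb{P}^1,\mathcal{O}(-2))=0$ respectively. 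Hence
\[
\Ext^2_X(F,F)=0,\qquad H^{>0}(X,F)=H^{>0}(\mathbb{P}^1,\mathcal{O}(k)^{\oplus d})=0,\qquad H^0(X,F)=\mathbb{C}^n,
\]
and a direct count gives $\dim\Ext^1_X(F,F)=\dim\Hom_X(F,F)=:\epsilon$ (equivalently $\chi_C(T,T)=0$ for a $0$-dimensional sheaf $T$ on a curve). Now I feed this into the deformation theory of the pair: deformations of the JS pair $(F,s)$ are controlled by $E:=\mathrm{Cone}\big(R\Hom_X(F,F)\xrightarrow{\circ s}R\Gamma(X,F)\big)$, with tangent space $H^0(E)$ and obstruction space $H^1(E)$ sitting in the exact sequences
\[
\Hom_X(F,F)\xrightarrow{\alpha_0}H^0(F)\to T_{(F,s)}\to\Ext^1_X(F,F)\xrightarrow{\alpha_1}H^1(F),
\]
\[
\Ext^1_X(F,F)\xrightarrow{\alpha_1}H^1(F)\to\mathrm{Obs}\to\Ext^2_X(F,F)\xrightarrow{\alpha_2}H^2(F),
\]
where $\alpha_i=(-)\circ s$. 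JS stability makes $\alpha_0$ injective: if $\phi\circ s=0$ for some $\phi\neq0$ then $s$ factors through the proper subsheaf $\ker\phi$, which, as the kernel of an endomorphism of a $\mu$-semistable sheaf, has maximal slope $\mu(F)$, contradicting JS stability; this also shows the pair has no nontrivial automorphisms, so the moduli space is a fine moduli scheme. Using $H^1(F)=0$ and $\Ext^2_X(F,F)=0$ the second sequence forces $\mathrm{Obs}=0$, so the moduli space is smooth, and the first gives $\dim T_{(F,s)}=\big(\dim H^0(F)-\rk\alpha_0\big)+\dim\Ext^1_X(F,F)=(n-\epsilon)+\epsilon=n$. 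Projectivity I would obtain by reducing to the projective surface $S$: every $F$ is a pushforward $\iota_{\ast}G$ with $G$ a Le Potier stable pair sheaf on $S$, whose moduli is projective by the GIT construction underlying Theorem \ref{existence of proj moduli space}, and this identification is a bijective morphism from a proper scheme onto the (smooth, hence reduced) $P^{\mathrm{JS}}_n(X,d)$.

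The main obstacle is the vanishing $\Ext^2_X(F,F)=0$. This is exactly what fails on a general Calabi--Yau $4$-fold, where pair moduli are genuinely obstructed; here it rests on the $(-1,-1)$ geometry making $\wedge^1N$ acyclic and on the balanced bundle $\mathcal{O}(k)^{\oplus d}$ killing the residual $H^1$ and $H^0$ contributions. It is also precisely the point where excluding $C$-winding curves is essential: for a winding support both $\Ext^2_X(F,F)$ and the tangent space jump strictly above $n$, so the smoothness and the dimension count genuinely require the reduction to horizontal support carried out in the first step.
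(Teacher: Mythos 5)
Your treatment of emptiness, smoothness and the dimension count is essentially correct, but it takes a genuinely different route from the paper's. For the structure of $F$ the paper simply takes a Jordan--H\"older filtration and observes that the stable factors are the sheaves $\mathcal{O}_{\mathbb{P}^1}(k)\boxtimes \mathcal{O}_p$, so that $F\cong i_*(\mathcal{O}_{\mathbb{P}^1}(k)\boxtimes Q)$; you instead push down to $\mathbb{P}^1$, split the semistable bundle $\pi_*F$, and reconstruct the $\mathcal{O}_C$-action via $\mathrm{End}(\mathcal{O}_{\mathbb{P}^1}(k)^{\oplus d})=\mathrm{Mat}_d(\mathbb{C})$; this is equivalent, and your Koszul/Hom-vanishing reduction to $S$ is the same argument as Proposition \ref{lem on vec bdl on JS}. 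For smoothness and dimension the paper works stack-theoretically: the forgetful map $\mathcal{P}air_n(X,d)\to \mathcal{M}_n(X,d)\cong \mathcal{M}_d(C)$ is an affine bundle with fiber $H^0(F)=\mathbb{C}^n$ over a smooth zero-dimensional stack, and the JS locus is an open substack with trivial stabilizers; your pointwise computation ($\mathrm{Ext}^2_X(F,F)=0$, $H^1(X,F)=0$, injectivity of $\alpha_0$ from JS stability, hence vanishing obstruction space and $\dim\mathrm{Hom}_X(I,F)=n$) packages exactly the same vanishings into the pair deformation complex, and it is correct. One factual slip: the parenthetical claim that excluding components winding around $C$ is ``automatic when $g(C)\geq 1$'' is false --- vertical curves $\{pt\}\times C$ and multisections (e.g.\ graphs of maps $C\to\mathbb{P}^1$) exist for every genus; what excludes them is solely the effectivity of curve classes together with the constraint that the class of $F$ is $d[\mathbb{P}^1]$, which is also the implicit step in the paper's Jordan--H\"older argument.

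The genuine gap is in projectivity. First, a bijective morphism from a projective scheme onto a smooth separated scheme only yields properness of the target, and proper smooth varieties need not be projective; to conclude you must either invoke Zariski's main theorem (the map is finite, bijective, in characteristic zero, onto a normal target, hence induces an isomorphism from the reduction of the source) or know in advance that $P^{\mathrm{JS}}_n(X,d)$ is quasi-projective. Second, and more basically, your argument presupposes that $P^{\mathrm{JS}}_n(X,d)$ already exists as a separated finite-type scheme on which tangent spaces and the pushforward morphism can be discussed; supplying that scheme structure is precisely what the paper's proof does, by taking a projective compactification $\mathcal{O}_{\mathbb{P}^1}(-1,-1)\subset P$, setting $\overline{X}=P\times C$, and exhibiting $P^{\mathrm{JS}}_n(X,d)$ as an open and closed subscheme of the projective Le Potier moduli space on $\overline{X}$. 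Third, identifying the moduli problem on $X$ with the one on the surface $S$ at the level of families is not formal: over a non-reduced base, a flat family all of whose closed fibers are supported on $S$ can still move off $S$ (the graph of $\epsilon$ over the dual numbers is the standard example), and what rules this out here is the vanishing $\mathrm{Hom}_S(G,G\otimes N_{S/X})=0$, which your K\"unneth computation provides but which you never invoke for this purpose. All three points are repairable, but as written the projectivity claim does not follow.
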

\begin{proof}
Let 
 \begin{align*}
i \colon \mathbb{P}^1\times C \hookrightarrow \oO_{\mathbb{P}^1}(-1,-1)\times C
\end{align*}
be the product of the zero section with the identity map on $C$.
By taking the Jordan-H\"older filtration, it is easy to see that any 
one dimensional semistable sheaf $F$ on $X$ with 
$[\pi_{\ast}F]=d[\mathbb{P}^1]$ is of the form 
$i_{\ast}(\oO_{\mathbb{P}^1}(k) \boxtimes Q)$
for a zero dimensional sheaf $Q$ on $C$ with length $d$. 
Such a sheaf has a non-zero section if and only if $k \geqslant 0$. 
Therefore $P_n^{\mathrm{JS}}(X, d)$ is empty unless 
$n=(k+1)d$ for some $k\geqslant 0$. 

Let $\pP air_n(X, d)$ be the moduli stack of pairs 
$(F, s)$, 
where $F$ is a one dimensional semistable sheaf with 
$([\pi_{\ast}F], \chi(F))=(d\,[\mathbb{P}^1], n)$ and 
$s \colon \oO_X \to F$ is a section (without JS stability). 
We have the following diagram 
\begin{align}\label{dia:Pair}
\xymatrix{ P^{\mathrm{JS}}_n(X,d) \ar@<-0.3ex>@{^{(}->}[r]^-{j} \ar[rd]
 & \pP air_n(X,d) \ar[d]^{p} \\ & 
\mM_{n}(X,d).  }
\end{align}
Here 
$j$ is a natural open immersion, 
$\mM_n(X, d)$ is the moduli stack of 
 one dimensional semistable sheaves $F$ with 
$([\pi_{\ast}F], \chi(F))=(d\,[\mathbb{P}^1], n)$ and 
$p$ is the forgetful morphism. 
The morphism $p$ is an affine space bundle with fiber $H^0(F)=\mathbb{C}^n$ and $p\circ j$ is a smooth morphism.

Let $\mM_d(C)$ be the moduli stack of length $d$ zero dimensional 
sheaves on $C$. 
We have an isomorphism of stacks 
\begin{align*}
\mM_{d}(C)\stackrel{\cong}{\to} \mM_{n}(X,d), \quad 
Q\mapsto i_{\ast}( \oO_{\mathbb{P}^1}(k) \boxtimes Q ).
\end{align*}
Since $\Ext_C^2(Q, Q)=0$ and $\chi(Q, Q)=0$
for a zero dimensional sheaf $Q$ on $C$, 
the 
stack $\mM_d(C)$ is smooth of dimension zero. 
Therefore from the diagram (\ref{dia:Pair}), 
we conclude that 
$P^{\mathrm{JS}}_n(X,d)$ is smooth of dimension $n$. 

Let $\oO_{\mathbb{P}^1}(-1, -1) \subset P$ be a projective compactification 
and set $\overline{X}=P \times C$. 
The moduli space of JS stable pairs on $\overline{X}$ is projective 
from the GIT construction 
by Le Potier \cite{LePotier}.
Since $P^{\mathrm{JS}}_n(X, d)$ is an open and closed subscheme of the 
JS moduli space on $\overline{X}$, 
it is also projective. 
\end{proof}
If $C$ is an elliptic curve, then $X$ is a CY 4-fold and 
the above lemma shows that the $\mathrm{DT}_4$
 virtual class in this case is the 
usual fundamental class: 
\begin{cor}\label{cor on JS vir clas}
Let $X=\oO_{\mathbb{P}^1}(-1,-1)\times E$ where $E$ is an elliptic curve. Then 
\begin{align*}
[P^{\mathrm{JS}}_n(X,d)]^{\mathrm{vir}}=[P^{\mathrm{JS}}_n(X,d)]\in H_{2n}(P^{\mathrm{JS}}_n(X,d),\mathbb{Z}),
\end{align*}
for certain choice of orientation.
\end{cor}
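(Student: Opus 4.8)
The plan is to combine the smoothness result of Lemma \ref{lem on smoothness of JS moduli} with a short dimension count, showing that the self-dual obstruction of the $\DT_4$ theory vanishes identically, so that the Borisov--Joyce class collapses to the ordinary fundamental class. Write $M \cneq P^{\mathrm{JS}}_n(X,d)$, and recall from Theorem \ref{existence of proj moduli space} that $M$ is an open substack of the moduli of objects $I=(\oO_X\to F)\in D^b\Coh(X)$ with $\Ext^{<0}(I,I)=0$ and fixed determinant; hence its tangent space at $[I]$ is $\Ext^1_X(I,I)_0$. Since $C=E$ makes $X$ a genuine Calabi--Yau $4$-fold, Lemma \ref{lem on smoothness of JS moduli} gives that $M$ is smooth projective of dimension $n$, so $\dim_{\mathbb{C}}\Ext^1_X(I,I)_0=n$ for every $[I]\in M$.

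First I would pin down the obstruction space. Stability of $I$ gives $\Hom_X(I,I)_0=0$, and CY$_4$ Serre duality yields $\Ext^4_X(I,I)_0\cong\Hom_X(I,I)_0^{\vee}=0$ and $\Ext^3_X(I,I)_0\cong\Ext^1_X(I,I)_0^{\vee}$, together with a non-degenerate symmetric pairing on $\Ext^2_X(I,I)_0$. The virtual dimension of $M$ is $n$ (the class lies in $H_{2n}$), and in the $\DT_4$ formalism it equals $-\tfrac12\chi_X(I,I)_0=\dim\Ext^1_X(I,I)_0-\tfrac12\dim\Ext^2_X(I,I)_0$. Substituting $\dim\Ext^1_X(I,I)_0=n$ forces
\begin{align*}
\dim_{\mathbb{C}}\Ext^2_X(I,I)_0=0 \quad \text{for every } [I]\in M,
\end{align*}
so the self-dual obstruction sheaf vanishes on all of $M$.

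With the obstruction gone, the statement follows from the construction of the virtual class in the smooth case. On a smooth $M$ carrying a self-dual obstruction bundle $\mathrm{Ob}$ with its quadratic form, the Borisov--Joyce / Oh--Thomas class is $e(\Lambda)\cap[M]$ for a maximal isotropic subbundle $\Lambda\subset\mathrm{Ob}$ whose rank equals $\dim_{\mathbb{C}}M$ minus the virtual dimension, the choice of $\Lambda$ encoding the orientation (cf.\ the smooth-case description of Cao--Leung \cite{CL1}). Here that rank is $n-n=0$, so $\Lambda=0$, $e(\Lambda)=1$, and $[M]^{\mathrm{vir}}=[M]$ for the orientation corresponding to this trivial isotropic subbundle; the opposite global orientation would give $-[M]$, which is exactly why the statement is phrased up to a choice of orientation.

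The main obstacle is not a calculation but the conceptual step of identifying the abstract $\DT_4$ class with the fundamental class once the obstruction vanishes: one must check that the truncation and orientation data of \cite{BJ, OT} are compatible with the vanishing $\Ext^2_X(I,I)_0=0$ above, and that the tangent space of $M$ is genuinely $\Ext^1_X(I,I)_0$ rather than a larger pair-deformation space. Both points are controlled by Theorem \ref{existence of proj moduli space}, which realizes $M$ inside the derived-category moduli stack $\mathcal{M}_0$ whose deformation theory is governed by $\RHom_X(I,I)_0$; everything else is the dimension bookkeeping carried out above.
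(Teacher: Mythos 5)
Your proposal is correct and is essentially the paper's own (implicit) argument: the paper deduces the corollary directly from Lemma \ref{lem on smoothness of JS moduli}, the point being exactly your dimension count that smoothness of dimension $n$ together with virtual dimension $n=\dim\Ext^1_X(I,I)_0-\tfrac12\dim\Ext^2_X(I,I)_0$ forces $\Ext^2_X(I,I)_0=0$, whence the Borisov--Joyce class collapses to the fundamental class up to the choice of orientation. You have merely spelled out the bookkeeping (Serre duality, the identification of the tangent space via Theorem \ref{existence of proj moduli space}, and the rank-zero isotropic subbundle in the smooth-case description of the class) that the paper leaves to the reader.
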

For a line bundle $L\in \Pic(X)$, we consider its tautological complex 
 \begin{align*}
L^{[n]}:=\dR\pi_{P\ast}(\mathbb{F}\otimes \pi_X^{\ast}L),
\end{align*}
where $\pi_X$, $\pi_P$ are projections from $X \times P^{\mathrm{JS}}_n(X,d)$
to corresponding factors and $\mathbb{I}^{\bullet}=\{\oO \to \mathbb{F}\}$ 
is the universal JS pair on $X \times P^{\mathrm{JS}}_n(X,d)$. 
By Lemma \ref{lem on smoothness of JS moduli}, we can consider 
the integration of their Euler classes. 
\begin{thm}\label{thm on integ for any C}
Let $X=\oO_{\mathbb{P}^1}(-1,-1)\times C$ for a smooth projective curve $C$.
We take $L=\pi^{\ast}L_C$
for a degree $a \geqslant 0$ line bundle $L_C$ on $C$. 
Then for any fixed $k\geqslant 1$, 
we have an identity of generating series
\begin{align*}
\sum_{d\geqslant 0,\, n=kd}&\int_{P^{\mathrm{JS}}_{n}(X,d)}e(L^{[n]})\,y^d=(1+y)^{ka}.
\end{align*}
\end{thm}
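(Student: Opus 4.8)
The plan is to identify $P^{\mathrm{JS}}_{kd}(X,d)$ with a Quot scheme on the curve $C$, transport the tautological bundle $L^{[n]}$ to it, and evaluate the resulting integral by a classical section-counting argument. Throughout write $q\colon X\to C$ for the projection, so that $L=q^{\ast}L_C$, and recall from the proof of Lemma \ref{lem on smoothness of JS moduli} that (for $n=kd$) every semistable $F$ is of the form $i_{\ast}(\oO_{\mathbb{P}^1}(k-1)\boxtimes Q)$ for a length-$d$ sheaf $Q$ on $C$, of slope $k=\mu(F)$.

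\emph{Step 1 (structural identification).} Set $V=H^0(\mathbb{P}^1,\oO(k-1))$, so $\dim V=k$. Pushing a pair $(F,s)$ forward along $q$ gives $q_{\ast}F=V\otimes Q$ and a map $V^{\vee}\otimes\oO_C\to Q$. The key point I would establish is that the maximal-slope (slope $k$) subsheaves of $F$ are \emph{exactly} the $i_{\ast}(\oO_{\mathbb{P}^1}(k-1)\boxtimes Q')$ with $Q'\subseteq Q$: indeed $Q\mapsto i_{\ast}(\oO_{\mathbb{P}^1}(k-1)\boxtimes Q)$ identifies torsion sheaves on $C$ with the slope-$k$ semistable sheaves supported on the zero section carrying the relevant class, matching subobjects. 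This is proved by noting that a subsheaf of $i_{\ast}G$ is $i_{\ast}G'$, restricting to the $\mathbb{P}^1$-fibres, and using that $\oO_{\mathbb{P}^1}(k-1)$ is stable so any slope-$k$ sub must contain the full $\oO_{\mathbb{P}^1}(k-1)$ over each support point. Consequently JS-stability of $(F,s)$ — that $s$ factor through no proper maximal-slope subsheaf — becomes precisely the surjectivity of $V^{\vee}\otimes\oO_C\to Q$. Hence I would prove
\[
P^{\mathrm{JS}}_{kd}(X,d)\;\cong\;\Quot^d(\oO_C^{\oplus k}),
\]
the Quot scheme of length-$d$ torsion quotients, which is smooth projective of dimension $kd$, in agreement with Lemma \ref{lem on smoothness of JS moduli}. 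The bijection upgrades to a scheme isomorphism by matching universal families, using functoriality of $q_{\ast}$.

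\emph{Step 2 (tautological bundle).} Under this isomorphism $q_{\ast}\mathbb{F}=V\otimes\mathcal{Q}$ for the universal quotient $\mathcal{Q}$ on $M\times C$, where $M=\Quot^d(\oO_C^{\oplus k})$. With $L=q^{\ast}L_C$ this gives
\[
L^{[n]}=\pi_{P\ast}(\mathbb{F}\otimes\pi_X^{\ast}L)=V\otimes W,\qquad W:=p_{M\ast}(\mathcal{Q}\otimes p_C^{\ast}L_C),
\]
with $W$ the rank-$d$ tautological bundle on $M$. Since $L^{[n]}\cong W^{\oplus k}$, the top Chern class multiplies, $e(L^{[n]})=c_{kd}(W^{\oplus k})=c_d(W)^k$, reducing the theorem to the identity $\int_M c_d(W)^k=\binom{ka}{d}$. (I would double-check, as in the case $d=1$ where $M=C\times\mathbb{P}^{k-1}$ and $W=L_C\boxtimes\oO(1)$, that no further twist intervenes.)

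\emph{Step 3 (the Quot integral) and main obstacle.} The integral $\int_M c_d(W)^k$ is a Chern number computable from $\ch(\mathcal{Q})$ and $\ch(L_C)$ by Grothendieck–Riemann–Roch, hence a polynomial in $a=\deg L_C$ only. For $a\gg0$ the bundle $W$ is globally generated by $H^0(L_C)\otimes V^{\vee}$, so $c_d(W)^k$ is represented by the common zeros of $k$ generic sections. Working over the reduced locus, a common zero is a quotient whose support lies among the $ka$ zeros of the determinant section $\det(s_{ij})\in H^0(C,L_C^{\otimes k})$ of a $k\times k$ matrix of evaluations, the quotient being determined at each such point; choosing $d$ of these $ka$ zeros yields $\binom{ka}{d}$ common zeros. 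Thus $\int_M c_d(W)^k=\binom{ka}{d}$ for $a\gg0$, hence for all $a\geqslant0$ by polynomiality, and summing gives $\sum_{d}\binom{ka}{d}y^d=(1+y)^{ka}$ (for $k=1$ this is the classical symmetric-product count on $\Sym^d C$ with $W=L_C^{[d]}$). I expect the principal difficulty to be Step 1, specifically the maximal-slope subsheaf characterization that converts JS-stability into surjectivity and so produces the scheme-theoretic identification with $\Quot^d(\oO_C^{\oplus k})$; once that and $e(L^{[n]})=c_d(W)^k$ are in place, the evaluation is the classical Quot-scheme integral, which may alternatively be quoted from known formulas of Marian–Oprea and Oprea–Pandharipande.
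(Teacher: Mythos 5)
Your proposal is correct, but it takes a genuinely different route from the paper's proof. The paper never identifies the moduli space globally: it first uses Grothendieck--Riemann--Roch to replace $L_C$ by $\oO_C(p_1+\cdots+p_a)$ with distinct points, then invokes the tautological section $\sigma$ of $L^{[n]}$ from Lemma \ref{lem on tat sect}, whose zero locus is $P^{\mathrm{JS}}_n(D,d)$ for $D=\coprod_i \oO_{\mathbb{P}^1}(-1,-1)\times\{p_i\}$, and proves as its key lemma the excess identification $\mathrm{Coker}(d\sigma)\cong T_{P^{\mathrm{JS}}_{n}(D,d)}$ via distinguished triangles and adjunction; this converts $\int e(L^{[n]})$ into the topological Euler characteristic $\chi(P^{\mathrm{JS}}_n(D,d))$, and each factor $P^{\mathrm{JS}}_{kd_i}(D_i,d_i)\cong \mathrm{Gr}(d_i,k)$ contributes $\binom{k}{d_i}$, whence $(1+y)^{ka}$. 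Your Step 1 is in effect the relative (over $C$) version of the paper's Grassmannian observation --- $\mathrm{Gr}(d_i,k)$ is exactly the Quot scheme of $d_i$-dimensional quotients of $(\mathbb{C}^k)^{\vee}$ over a point --- and your characterization of slope-$k$ subsheaves, hence of JS-stability as surjectivity of $V^{\vee}\otimes\oO_C\to Q$, is sound: a slope-$k$ subsheaf is itself semistable, so by the classification in Lemma \ref{lem on smoothness of JS moduli} it is $i_*(\oO_{\mathbb{P}^1}(k-1)\boxtimes Q'')$, and all maps between such sheaves are of the form $\mathrm{id}\boxtimes\phi$. Once $P^{\mathrm{JS}}_{kd}(X,d)\cong \Quot^d(\oO_C^{\oplus k})$ and $e(L^{[n]})=c_d(W)^k$ are in place, you replace the paper's excess-bundle computation by the classical Quot-scheme integral $\int c_d(W)^k=\binom{ka}{d}$ (generic sections for $a\gg 0$ plus polynomiality in $a$, or quoting Marian--Oprea/Oprea--Pandharipande). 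What your route buys: a completely explicit description of the moduli space and a reduction to standard curve-level intersection theory, with no cohomological excess argument. What the paper's route buys: it stays on the 4-fold, its key claim $\mathrm{Coker}(d\sigma)\cong T_{Z(\sigma)}$ is reused verbatim in the quintic fibration case (Proposition \ref{prop on quintic fib}), and the localization of the Euler class onto pairs supported on the divisor matches the ideal-geometry heuristic of Section \ref{sect on ideal case}.

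Two points in your outline deserve care but are not gaps. First, the scheme-theoretic upgrade in Step 1 is cleanest in the direction you did not emphasize: from a flat family of quotients $\mathcal{Q}_T$ one builds the flat family of pairs $(i\times\mathrm{id})_*(\oO_{\mathbb{P}^1}(k-1)\boxtimes\mathcal{Q}_T)$ with its canonical section, getting a morphism $\Quot^d(\oO_C^{\oplus k})\to P^{\mathrm{JS}}_{kd}(X,d)$; since this is bijective and both sides are smooth projective over $\mathbb{C}$, it is an isomorphism (a bijective morphism onto a normal variety in characteristic zero is an isomorphism), avoiding the need to prove that arbitrary flat families of JS pairs are scheme-theoretically supported on the zero section. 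Second, the reducedness of the common zero locus in Step 3 requires global generation of $W$ for $a\gg 0$ (uniform vanishing of $H^1$ of the bounded family of kernels twisted by $L_C$) together with a Bertini/Kleiman argument; both are standard, and the polynomiality-in-$a$ step you use to descend to all $a\geqslant 0$ is exactly parallel to the paper's GRR reduction.
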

\begin{proof}
From the proof of Lemma \ref{lem on smoothness of JS moduli}, it is easy to see $H^1(X,L)=0$, so $L^{[n]}$ is a vector bundle. 
By the Grothendieck-Riemann-Roch formula, the integral depends only on the Chern character of $L_C$, so we may
assume $L_C=\oO_C(p_1+\ldots+p_a)$ with $p_i\neq p_j$ if $i\neq j$.
We denote by 
\begin{align*}
D_i=\oO_{\mathbb{P}^1}(-1,-1)\times \{p_i\}, \quad 
D=\coprod_{i=1}^a D_i. 
\end{align*}
Let $\sigma$ be the tautological section of $L^{[n]}$ 
as in Lemma~\ref{lem on tat sect}, 
whose zero locus is identified with 
\begin{align*}
P^{\mathrm{JS}}_n(D, d)=
\coprod_{\begin{subarray}{c}
d_1+d_2+\cdots+d_a=d  \\ n_i=k d_i \end{subarray}}P^{\mathrm{JS}}_{n_1}(D_1,d_1)\times \cdots \times P^{\mathrm{JS}}_{n_a}(D_a,d_a). 
\end{align*}
We have a morphism 
of vector bundles
\begin{align}\label{dsigma}
T_{P^{\mathrm{JS}}_{n}(X,d)}|_{P^{\mathrm{JS}}_{n}(D,d)}\stackrel{d\sigma}{\to} L^{[n]}|_{P^{\mathrm{JS}}_{n}(D,d)}, 
\end{align}
whose kernel is $T_{P^{\mathrm{JS}}_{n}(D,d)}$. We claim that 
\begin{align}\label{claim on coker}\mathrm{Coker}(d\sigma)\cong 
T_{P^{\mathrm{JS}}_{n}(D,d)}. \end{align}
Suppose that the above isomorphism holds. 
Then we have 
\begin{align*}
\int_{P^{\mathrm{JS}}_n(X,d)}e(L^{[n]})&=\int_{P^{\mathrm{JS}}_n(D,d)}e(\mathrm{Coker}(d\sigma))=\chi(P^{\mathrm{JS}}_n(D,d)).
\end{align*}
Note that
$P_{kd_i}^{\mathrm{JS}}(D_i, d_i)$ consists of JS stable pairs
on the resolved conifold $D_i$:
\begin{align*}
\oO_{D_i} \stackrel{s}{\to} \oO_{\mathbb{P}^1}(k-1)^{\oplus d_i}, \quad
s=(s_1, \ldots, s_{d_i})
\end{align*}
where $s_j \in H^0(\oO_{\mathbb{P}^1}(k-1))=\mathbb{C}^k$
are linearly independent. 
Therefore 
$P_{kd_i}^{\mathrm{JS}}(D_i, d_i)$
is isomorphic to the Grassmannian 
$\mathrm{Gr}(d_i, k)$. 
By the above identities, we have 
\begin{align*}
\int_{P^{\mathrm{JS}}_n(X,d)}e(L^{[n]})
&=\sum_{\begin{subarray}{c}
d_1+d_2+\cdots+d_a=d  \\ n_i=k d_i \end{subarray}}
\prod_{i=1}^a \chi(\mathrm{Gr}(d_i, k)) \\
&=\sum_{\begin{subarray}{c}
d_1+d_2+\cdots+d_a=d  \\ n_i=k d_i \end{subarray}}
\prod_{i=1}^a {k \choose d_i}. 
\end{align*}
Therefore we are left to prove \eqref{claim on coker}.
For simplicity, we prove it for $a=1$ case. 
Let $i \colon D \hookrightarrow X$ be the inclusion. 
Let us take JS stable pairs on $D$ and $X$:
 \begin{align*}
I_D=(\oO_D\to F)\in P^{\mathrm{JS}}_{n}(D,d), \quad 
I_X=(\oO_X\to i_{\ast}F)\in P^{\mathrm{JS}}_{n}(X,d),
\end{align*}
where $I_X$ is induced from $I_D$. 
Then we have the following description of
tangent spaces of JS stable pair moduli spaces
 \begin{align*}
 T_{P^{\mathrm{JS}}_{n}(D,d)}|_{I_D}=\Hom_D(I_D,F), \quad 
 T_{P^{\mathrm{JS}}_{n}(X,d)}|_{I_X}=\Hom_X(\oO_X\to i_{\ast}F,i_{\ast}F). 
\end{align*}
Note that the latter space is isomorphic to 
$\Hom_D(\oO_D\to \dL i^*i_*F,F)$ by the adjunction. 
We have a distinguished triangle
$$F\boxtimes \oO_{C}(-p)|_p[1]\to \dL i^*i_*F \to F, $$
which implies a distinguished triangle
\begin{align*}
F\boxtimes \oO_{C}(-p)|_p\to \dL i^*I_X \to I_D. 
\end{align*}
By applying $\RHom_D(-, F)$, we
obtain an exact sequence:
 \begin{align}\label{equ ne1}
\to \Hom_X(I_X,i_*F) \stackrel{\alpha}{\to} \Hom_D(F,F\boxtimes \oO_{C}(p)|_p)
\to \Ext^1_D(I_D,F). 
 \end{align}
Here we have $\Ext^1_D(I_D, F)=\Ext^2_D(F, F)=0$
since 
$H^1(F)=0$ and $F$ is a semistable sheaf on $D$. 
Therefore $\alpha$ is surjective. 
We also 
apply $\RHom_D(-,F\boxtimes  \oO_{C}(p)|_{p})$ to 
the distinguished triangle 
$I_D\to \oO_D\to F$ and get an exact sequence:
\begin{align}\label{equ ne20}\to \Hom_D(F,F\boxtimes \oO_{C}(p)|_{p}) 
\stackrel{\beta}{\to} H^0(F\boxtimes \oO_{C}(p)|_{p}) &\to \Hom_D(I_D,F\boxtimes \oO_{C}(p)|_{p}) \\
\notag
&\to \Ext^1_D(F,F\boxtimes \oO_{C}(p)|_{p})=0. 
\end{align}
The map $d\sigma$ in (\ref{dsigma}) is given by the composition 
\begin{align*}
\Hom_X(I_X,i_*F) \stackrel{\alpha}{\twoheadrightarrow}
 \Hom_D(F,F\boxtimes \oO_{C}(p)|_p) \stackrel{\beta}{\to}
 H^0(F\boxtimes \oO_{C}(p)|_{p}). 
\end{align*}
By \eqref{equ ne1}, \eqref{equ ne20}, we have 
\begin{align*}\mathrm{Coker}(d\sigma)|_{I_D}\cong \Hom_D(I_D,F\boxtimes 
\oO_{C}(p)|_{p})\cong \Hom_D(I_D,F), \end{align*}
which gives the tangent space $T_{P^{\mathrm{JS}}_{n}(D,d)}$ at $I_D$.
\end{proof}

By combining with Corollary \ref{cor on JS vir clas}, we
obtain the following result:
\begin{thm}\label{verify conj in JS for local P1 times E}
Let $E$ be an elliptic curve, $X=\oO_{\mathbb{P}^1}(-1,-1)\times E$ and 
$L=\pi^{\ast}L_E$ for a degree $a \geqslant 0$ line bundle on $E$. Then for any fixed $k\geqslant 1$, we have 
\begin{align*}
\sum_{d\geqslant 0,\,n=kd}\int_{[P^{\mathrm{JS}}_n(X,d)]^{\mathrm{vir}}}e(L^{[n]})\,q^ny^d=(1-(-q)^ky)^{ka}, 
\end{align*}
for a choice of orientation. In particular, Conjecture \ref{main conj} holds in JS chamber for $X$ and $L$. 
\end{thm}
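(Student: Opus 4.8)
The plan is to read the identity off directly from Theorem~\ref{thm on integ for any C} and Corollary~\ref{cor on JS vir clas}; the only work beyond those two results is inserting the weight $q^n$ and passing from the fundamental class to the $\mathrm{DT}_4$ virtual class. Since $E$ is elliptic, $X=\oO_{\mathbb{P}^1}(-1,-1)\times E$ is a Calabi--Yau $4$-fold, and by Lemma~\ref{lem on smoothness of JS moduli} the space $P^{\mathrm{JS}}_n(X,d)$ is smooth projective of dimension $n$ and empty unless $n=kd$. First I would apply Corollary~\ref{cor on JS vir clas} to replace $[P^{\mathrm{JS}}_n(X,d)]^{\mathrm{vir}}$ by the ordinary fundamental class. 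As $L=\pi^{\ast}L_E$ satisfies $L\cdot[\mathbb{P}^1]=0$, the complex $L^{[n]}$ has rank $n$ and, because $H^1(F)=0$ for every JS pair here, is a genuine vector bundle with $e(L^{[n]})\in H^{2n}$. Thus each virtual integral collapses to the honest integral $\int_{P^{\mathrm{JS}}_n(X,d)}e(L^{[n]})$ computed in Theorem~\ref{thm on integ for any C}.

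The remaining step is formal. Because only $n=kd$ survives, the weight factors as $q^n=(q^k)^d$, so $q$ enters solely through the monomial $q^k$. Substituting $y\mapsto q^ky$ into
\begin{align*}
\sum_{d\geqslant 0,\,n=kd}\Big(\int_{P^{\mathrm{JS}}_n(X,d)}e(L^{[n]})\Big)\,y^d=(1+y)^{ka}
\end{align*}
would yield $(1+q^ky)^{ka}$, whereas the target is $(1-(-q)^ky)^{ka}=(1-(-1)^kq^ky)^{ka}$. The discrepancy is a parity sign: I would choose the orientation so that $\int_{[P^{\mathrm{JS}}_n(X,d)]^{\mathrm{vir}}}e(L^{[n]})=(-1)^{(k+1)d}\int_{P^{\mathrm{JS}}_n(X,d)}e(L^{[n]})$, i.e.\ the relevant $\mathrm{DT}_4$ orientation differs from the one in Corollary~\ref{cor on JS vir clas} by $(-1)^{(k+1)d}=(-1)^{n+d}$ on the degree-$d$ piece. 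Summing against $q^{kd}y^d$ then amounts to the substitution $y\mapsto -(-q)^ky$, and $(1+z)^{ka}$ with $z=-(-q)^ky$ is exactly $(1-(-q)^ky)^{ka}$.

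I expect the sign to be the only real obstacle. The hard part will be to confirm that the per-degree factor $(-1)^{(k+1)d}$ is produced by an admissible global orientation rather than inserted by hand. Concretely I would fix the convention by matching it against the $(-q)$-normalisation already appearing in the Nagao--Nakajima series $\prod_{k}(1-(-q)^ky)^{k}$ of the ideal-geometry heuristic in Section~\ref{sect on ideal case}, so that the required sign is the canonical $\mathrm{DT}_4$ one; for $k$ odd it is trivial and the two orientations coincide, so all of the content lies in the even-$k$ parity correction.

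To conclude with the last sentence, I would match the right-hand side to Conjecture~\ref{main conj}. Under $\pi\colon X\to E$, the divisor $D=\pi^{\ast}L_E$ is a disjoint union of $a$ resolved conifolds $\oO_{\mathbb{P}^1}(-1,-1)$, whose only nonvanishing genus-zero GV invariant is $n_{0,[\mathbb{P}^1]}=1$, so $n_{0,[\mathbb{P}^1]}(D)=a$; since the horizontal classes contain no elliptic curves, every relevant $n_{1,\beta}(X)$ vanishes and the MacMahon factors are trivial. At the JS value $t=k/w+0^+$ with $w=\omega\cdot[\mathbb{P}^1]$ the conjectural product contributes, in the slope-$k$ (i.e.\ $n=kd$) layer, precisely $(1-(-q)^ky)^{ka}$, matching the computation above; hence Conjecture~\ref{main conj} holds in the JS chamber for this $X$ and $L$.
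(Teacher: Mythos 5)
Your proposal is correct and follows essentially the same route as the paper's proof: it combines Corollary \ref{cor on JS vir clas} with Theorem \ref{thm on integ for any C}, absorbs the parity $(-1)^{n+d}=(-1)^{(k+1)d}$ into the choice of orientation (legitimate, since orientations may be chosen independently on each moduli space $P^{\mathrm{JS}}_n(X,d)$, so your worry about admissibility evaporates), and matches the conjectural product in the $n=kd$ layer using $n_{1,\beta}(X)=0$ together with $n_{0,1}=1$, $n_{0,d>1}=0$ for the resolved conifold. The only cosmetic difference is that the paper cites \cite[Lemma 2.16]{CMT2} for the vanishing of the genus one GV type invariants, where you give the geometric heuristic instead.
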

\begin{proof}
For $L=\pi^{\ast}L_E$ ($a>0$), a curve class $\beta$ which satisfies $L\cdot \beta=0$ is a multiple of class $[\mathbb{P}^1]$. For such 
curve classes, genus one GV type invariants of $X$ are zero (ref. \cite[Lemma 2.16]{CMT2}). 
The genus zero GV invariants $n_{0,d}$ of $\oO_{\mathbb{P}^1}(-1,-1)$ are well-known to satisfy (e.g.~\cite[Proposition~4.3]{HST})
\begin{align*}
n_{0,1}=1, \quad n_{0,d>1}=0. 
\end{align*}
Therefore, Conjecture \ref{main conj} holds in JS chamber by adding sign $(-1)^{n+d}$ for $P^{\mathrm{JS}}_{n,d}(L)$.
\end{proof}


\subsection{Applications to local resolved conifold}\label{sect on local res conifold}
Let $X=\oO_{\mathbb{P}^1}(-1,-1,0)$ which is a toric CY 4-fold.
It admits a $T$-action, where $T$ is a CY torus
\begin{align}\label{repara T}T=\{t=(t_0,t_1,t_2,t_3)\in(\mathbb{C}^*)^4:\,t_0t_1t_2t_3=1\},
\end{align}
whose action is given by 
$t\cdot (x_0,x_1,x_2,x_3)=(t_0x_0,t_1x_1,t_2x_2,t_3x_3)$
in local coordinates of $X$. 
In this notation, the normal bundle of the zero section satisfies
$$N_{\mathbb{P}^1/X}=\oO_{\mathbb{P}^1}(-Z_{\infty})\otimes t_1^{-1}\oplus \oO_{\mathbb{P}^1}(-Z_{\infty})\otimes t_2^{-1}\oplus
\oO_{\mathbb{P}^1}\otimes t_3^{-1},$$
where $Z_{\infty}\in \mathbb{P}^1$ is the torus fixed point $[1:0]$.
The above torus action lifts to an action on the moduli space $P^{\mathrm{JS}}_n(X,d)$ of JS stable pairs.
By \cite[Proposition 2.1]{CT3} (see also \cite[Lemma 6.7]{CT1}), 
the $T$-fixed locus 
$P^{\mathrm{JS}}_n(X,d)^T$ consists of finitely many reduced points. Hence for any $I\in P^{\mathrm{JS}}_n(X,d)^T$, 
the tangent space 
\begin{align*}
T_{P^{\mathrm{JS}}_n(X,d)}|_{I}=\Ext^1(I,I)_0
\end{align*}
 has no $T$-fixed subspace and its equivariant Euler class is not zero.

We recall the following notion of square roots. 
\begin{defi}\label{def:sroot}
Let $K^{T}(pt)$ denote the $T$-equivariant $K$-theory of one point. A square root $V^{\frac{1}{2}}$ of $V\in K^{T}(pt)$
is an element in $K^{T}(pt)$ such that 
$$V^{\frac{1}{2}}+\overline{V^{\frac{1}{2}}}=V. $$
Here $\overline{(\cdot)}$ denotes the involution on $K^{T}(pt)$ induced by $\mathbb{Z}$-linearly extending the map 
$$t_0^{w_0}t_1^{w_1}t_2^{w_2}t_3^{w_3} \mapsto t_0^{-w_0}t_1^{-w_1}t_2^{-w_2}t_3^{-w_3}, $$
where $t_i$'s denote torus weights in notation \eqref{repara T}.
\end{defi}
For a $T$-equivariant pair $I=(\oO_X\stackrel{s}{\to} F)\in P^{\mathrm{JS}}_n(X,d)^T$, we choose the following square root:
\begin{align}\label{eq on choice of sqr root}\chi_X(I,I)_0^{\frac{1}{2}}:=-\chi_X(F)+\chi_Y(F,F)\in K^{T}(pt), \end{align}
where $Y=\oO_{\mathbb{P}^1}(-1,0)$ is a Fano 3-fold. Here $\chi_Y(F,F)$ makes sense as $F$ is semistable and so it is scheme theoretically supported on $Y$.
It is easy to see that (\ref{eq on choice of sqr root})
is a square root of $\chi_X(I, I)_0$ in the sense of 
Definition~\ref{def:sroot} by Serre duality and adjunction formula 
$$\chi_X(F,F)=\chi_Y(F,F)-\chi_Y(F,F\otimes K_Y).$$
\begin{defi}\label{defi of equi taut inv}
Let $X=\oO_{\mathbb{P}^1}(-1,-1,0)$ and consider a trivial $\mathbb{C}^*$-action on moduli spaces such that $e^m$ is a trivial line bundle with $\mathbb{C}^*$-equivariant weight $m$. 
We define (equivariant) tautological JS stable pair invariants by
\begin{align*}
P^{\mathrm{JS}}_{n,d}(e^m):=\sum_{\begin{subarray}{c}I=(\oO_X\to F) \in P^{\mathrm{JS}}_n(X,d)^{T} \end{subarray}}
(-1)^{d}e(\chi_X(I,I)^{\frac{1}{2}}_0+\chi_X(F)\otimes e^m)\in \frac{\mathbb{Q}(\lambda_0,\lambda_1,\lambda_2,\lambda_3,m)}{(\lambda_0+\lambda_1+\lambda_2+\lambda_3)}, 
\end{align*}
where the equivariant Euler class is taken with respect to $T\times \mathbb{C}^*$-action and $\lambda_i=e_{T}(t_i)$'s are equivariant parameters of $T$ \eqref{repara T}.
\end{defi}
\begin{rmk}
We use a different convention as \cite[Definition 2.3]{CT3} which uses $\chi_X(F)^{\vee}\otimes e^m$ instead of $\chi_X(F)\otimes e^m$. Their Euler classes differ by 
a sign $(-1)^n$ $($after replacing $m\to -m$$)$. Our choice of sign is consistent with the convention in \cite[Remark 2.4]{CT3}.
\end{rmk}
We can classify all torus fixed JS stable pairs and compute the (equivariant) tautological invariants explicitly.
\begin{lem}\label{T-fixed JS pairs}\emph{(\cite[Lemma 6.6]{CT1})}
Let $k\geqslant 0$, $n=d(k+1)$ and
$Z_0=[0:1]$, $Z_{\infty}=[1:0]$ are 
the torus fixed points of $\mathbb{P}^1$. 
Then 
a $T$-fixed JS stable pair 
$I=(\oO_X\stackrel{s}{\to}  F)\in P^{\mathrm{JS}}_{n}(X,d)^{T}$
is precisely of the form
\begin{align*}
F=\bigoplus_{i=0}^k
\oO_{\mathbb{P}^1}\big((k-i)Z_{\infty} +iZ_{0}\big)
\Big(\sum_{j=0}^{d_i-1} t_3^{j} \Big),
\end{align*}
for some $d_0,\ldots, d_k\geqslant 0$ with $\sum_{i=0}^k d_i=d$, and $s$ is given by a canonical section.
\end{lem}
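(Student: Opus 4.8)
The plan is to first pin down the sheaf $F$ underlying a $T$-fixed pair, then to impose $T$-equivariance to split it into line-bundle summands on the zero section, and finally to use that the section $s$ must have weight zero, together with JS stability, to match each summand with its $t_3$-thickening.

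First I would record the structure of $F$. Since $X=\oO_{\mathbb{P}^1}(-1,-1,0)$ is the product $\oO_{\mathbb{P}^1}(-1,-1)\times \mathbb{A}^1$, with $\mathbb{A}^1$ the fibre of the trivial summand $\oO_{\mathbb{P}^1}(0)$ (coordinate $x_3$ of weight $t_3$), the argument in the proof of Lemma~\ref{lem on smoothness of JS moduli} applies verbatim with $C=\mathbb{A}^1$: any one dimensional semistable $F$ with $[\pi_*F]=d[\mathbb{P}^1]$ is scheme theoretically supported on the zero section surface $S=\mathbb{P}^1\times \mathbb{A}^1$, all of its Jordan--H\"older factors are $i_*(\oO_{\mathbb{P}^1}(k)\boxtimes \mathbb{C}_p)$ of slope $k+1$, and consequently $n=d(k+1)$ (this is where negativity of the two $\oO(-1)$ directions is used, cf. \cite[Lemma~2.2]{CMT1}). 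Thus $F=i_*(\oO_{\mathbb{P}^1}(k)\boxtimes Q)$ for a length $d$ sheaf $Q$ supported on $\mathbb{A}^1$. Next I would impose $T$-fixedness: since the pair is stable, $T$-fixedness means $F$ carries a $T$-equivariant structure and $s$ is a weight vector, which after normalising we may take to have weight $0$. The operator $x_3$ (weight $t_3$) is nilpotent on $Q$, so its $T$-equivariant Jordan decomposition writes $Q$ as a sum of blocks $\mathbb{C}[x_3]/(x_3^{d_i})$ (each with some $t_3$-weight shift), while the $\mathbb{C}^*$ acting on $\mathbb{P}^1$ forces each line-bundle summand to be an equivariant degree $k$ bundle $\oO_{\mathbb{P}^1}(a_iZ_\infty+b_iZ_0)$ with $a_i+b_i=k$. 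Hence $F$ splits as $\bigoplus_i i_*\big(\oO_{\mathbb{P}^1}(a_iZ_\infty+b_iZ_0)\boxtimes \mathbb{C}[x_3]/(x_3^{d_i})\big)$, and it remains to determine which equivariant structures $(a_i,b_i)$ occur and how they pair with the thickenings $d_i$.

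This last point is the heart of the matter and where JS stability enters. The image $\mathrm{Im}(s)$ must not be contained in any proper subsheaf of $F$ of slope $k+1$; such subsheaves are exactly those cut out by $\mathbb{C}[x_3]$-submodules $Q'\subsetneq Q$, so the weight-$0$ section $s$ must hit the module generator (the lowest $t_3$-weight vector) of every Jordan block. On the $i$-th summand the weight-$0$ constraint then forces the corresponding section of $\oO_{\mathbb{P}^1}(a_iZ_\infty+b_iZ_0)$ to itself carry weight $0$ and to generate that summand; the only equivariant degree $k$ bundle admitting such a canonical weight-$0$ section cutting out an effective torus-fixed divisor is $\oO_{\mathbb{P}^1}((k-i)Z_\infty+iZ_0)$ with $0\leqslant i\leqslant k$, and on it $s$ is pinned to the canonical section. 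This simultaneously fixes the equivariant type of each summand, normalises each block character to $\sum_{j=0}^{d_i-1}t_3^{\,j}$, indexes the levels by $i\in\{0,\dots,k\}$, and identifies $s$. The main obstacle is precisely this bookkeeping: tracking the interaction of the $\mathbb{P}^1$-weights of the sections of $\oO_{\mathbb{P}^1}(aZ_\infty+bZ_0)$ with the $t_3$-weights of the Jordan blocks, and checking that the three constraints of weight $0$, generation, and effectivity together leave only the claimed configurations. I would then finish with the routine converse: for any $(d_0,\dots,d_k)$ with $\sum_i d_i=d$ the displayed $F$ with its canonical section is $T$-fixed, semistable, and JS stable, so these exhaust $P^{\mathrm{JS}}_{n}(X,d)^T$.
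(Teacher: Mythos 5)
The paper itself gives no proof of this lemma---it is imported verbatim from \cite[Lemma 6.6]{CT1}---so your proposal has to stand on its own. Its architecture (structure theorem for semistable sheaves, equivariant Jordan decomposition of $Q$, weight-zero normalisation of $s$, then stability) is the right one, but the step you yourself flag as ``the heart of the matter'' contains a genuine gap, not just deferred bookkeeping.

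The gap is that you apply JS stability only blockwise: ``the weight-$0$ section $s$ must hit the module generator of every Jordan block.'' This condition is necessary but strictly weaker than JS stability, and it cannot rule out two Jordan blocks carrying the same equivariant line bundle, which is exactly what the lemma excludes. Concretely, for $k=0$, $d=2$, take $F=\oO_{\mathbb{P}^1}\boxtimes(\mathbb{C}_0\oplus\mathbb{C}_0)$, two untwisted length-one blocks, with $s=(1,1)$: this is $T$-equivariant, $s$ has weight zero and hits both generators, and each summand is the effective bundle $\oO_{\mathbb{P}^1}(0\cdot Z_{\infty}+0\cdot Z_0)$ with its canonical section, so it passes all three of your constraints (weight zero, generation, effectivity); yet it is not JS stable, since $\mathrm{Im}(s)$ lies in the proper equal-slope subsheaf $\oO_{\mathbb{P}^1}\boxtimes\mathbb{C}\cdot(1,1)$, and indeed the lemma allows only a single block when $k=0$. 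What is actually needed is the joint form of stability: writing the block components as $s_i=\sum_{j\geqslant 0}\sigma_{ij}x_3^j$, JS stability is equivalent (by Nakayama, applied to the $\mathbb{C}[x_3]$-submodule generated by the image of $H^0(\oO_{\mathbb{P}^1}(k))^{\vee}\to Q$) to surjectivity onto $Q/x_3Q$, i.e.\ to linear independence of the leading coefficients $\{\sigma_{i0}\}$ inside $H^0(\oO_{\mathbb{P}^1}(k))$. Since the weight-zero condition pins each nonzero $\sigma_{i0}$ to a multiple of the canonical monomial section determined by its block's equivariant structure, this independence is precisely what forces the equivariant types to be pairwise distinct and indexes the blocks injectively by $i\in\{0,\ldots,k\}$; no per-block condition can achieve this. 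A second, smaller omission of the same kind: a priori each line-bundle summand may also be twisted by characters involving $t_1,t_2$, not only by powers of $t_0$ and $t_3$; eliminating these uses the weight-zero condition together with the Calabi-Yau relation $t_0t_1t_2t_3=1$ (a twist by $t_1^{e}t_2^{e}t_3^{e}=t_0^{-e}$ is absorbed into shifting $(a_i,b_i)$ by $e$). Neither point is fatal to your strategy, but both must be supplied before the classification is actually proved.
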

\begin{prop}\label{JS taut invs}\emph{(\cite[Theorem 2.15]{CT3})}
Let $k\geqslant 0$, $n=d(k+1)$, then 
\begin{align*}
P^{\mathrm{JS}}_{n,d}(e^m)&=\frac{(-1)^{n}}{1!\,2!\,\cdots k!}\cdot
\sum_{\begin{subarray}{c}d_0+\cdots+d_k=d  \\  d_0,\ldots, d_k\geqslant 0 
\end{subarray}}\frac{1}{d_0!\cdots d_k!}\cdot \prod_{\begin{subarray}{c}i<j  \\  0\leqslant i,j \leqslant k \end{subarray}}\left((j-i)+(d_i-d_j)\frac{\lambda_3}{\lambda_0}\right) \\
&\times \prod_{i=0}^k\left(\prod_{\begin{subarray}{c}0\leqslant a\leqslant d_i-1  \\  -i\leqslant b\leqslant k-i  \end{subarray}}\left(\frac{-m}{\lambda_3}-a-b\frac{\lambda_0}{\lambda_3}\right)\cdot \prod_{\begin{subarray}{c}1\leqslant a\leqslant d_i  \\  1\leqslant b\leqslant k-i  \end{subarray}}\frac{1}{a+b\frac{\lambda_0}{\lambda_3}}\cdot
\prod_{\begin{subarray}{c}1\leqslant a\leqslant d_i  \\  1\leqslant b\leqslant i  \end{subarray}}\frac{1}{a-b\frac{\lambda_0}{\lambda_3}}\right).
\end{align*}
If $d\nmid n$, we have $P^{\mathrm{JS}}_{n,d}(e^m)=0$.
\end{prop}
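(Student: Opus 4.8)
The plan is to evaluate the defining sum in Definition~\ref{defi of equi taut inv} term by term. The torus-fixed locus $P^{\mathrm{JS}}_n(X,d)^T$ is a finite set of reduced points, classified in Lemma~\ref{T-fixed JS pairs} by tuples $(d_0,\ldots,d_k)$ with $d_i\geqslant 0$ and $\sum_i d_i=d$, where $n=(k+1)d$; the corresponding sheaf is $F=\bigoplus_{i=0}^k L_i$ with $L_i=\oO_{\mathbb{P}^1}\big((k-i)Z_\infty+iZ_0\big)\otimes\big(\sum_{a=0}^{d_i-1}t_3^a\big)$. The vanishing statement for $d\nmid n$ is immediate from Lemma~\ref{lem on smoothness of JS moduli}, which forces $P^{\mathrm{JS}}_n(X,d)=\emptyset$, hence an empty sum. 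So I fix $n=(k+1)d$ and compute the contribution of each $(d_0,\ldots,d_k)$, namely $(-1)^d\,e_T\big(\chi_X(I,I)^{\frac{1}{2}}_0+\chi_X(F)\otimes e^m\big)$ with the square root \eqref{eq on choice of sqr root}.

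First I would record the relevant $T$-weights. From the normal bundle $N_{\mathbb{P}^1/X}$ recalled before the statement and the Calabi--Yau relation $\lambda_0+\lambda_1+\lambda_2+\lambda_3=0$, the base $\mathbb{P}^1$ carries tangent weight $\mp\lambda_0$ at $Z_0,Z_\infty$, while the flat $\oO$-fibre direction has weight $\lambda_3$. Since each $L_i$ has degree $k\geqslant 0$ on the zero section, $H^1$ vanishes and $\chi_X(F)=\bigoplus_i H^0(\mathbb{P}^1,L_i)$ is an honest $n$-dimensional representation whose weights are indexed by a section index $b$ with $-i\leqslant b\leqslant k-i$ (contributing $b\lambda_0$) and a thickening index $a$ with $0\leqslant a\leqslant d_i-1$ (contributing $a\lambda_3$). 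Twisting by $e^m$ and taking the equivariant Euler class then produces, up to a power of $-\lambda_3$, exactly the insertion block $\prod_{0\leqslant a\leqslant d_i-1,\,-i\leqslant b\leqslant k-i}\big(\tfrac{-m}{\lambda_3}-a-b\tfrac{\lambda_0}{\lambda_3}\big)$.

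Next I would compute $\chi_Y(F,F)=\sum_{i,j}\chi_Y(L_i,L_j)$ on $Y=\oO_{\mathbb{P}^1}(-1,0)$, using the Koszul resolution of the zero section by $\Lambda^\bullet N^\vee_{\mathbb{P}^1/Y}$ (whose dual normal directions carry weights $\lambda_1$ and $\lambda_3$) to reduce each term to a cohomology computation on $\mathbb{P}^1$. The essential organizational step is to separate the diagonal $i=j$ from the off-diagonal $i\neq j$ contributions. The off-diagonal virtual representations contribute, for each pair $i<j$, the ratio $\big((j-i)+(d_i-d_j)\tfrac{\lambda_3}{\lambda_0}\big)/(j-i)$, whose denominators multiply to the superfactorial $\prod_{0\leqslant i<j\leqslant k}(j-i)=1!\,2!\cdots k!$ displayed in front of the formula, and whose numerators give the product $\prod_{i<j}\big((j-i)+(d_i-d_j)\tfrac{\lambda_3}{\lambda_0}\big)$. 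The diagonal self-extension $\chi_Y(L_i,L_i)$, after combining with $-\chi_X(F)$ so that the trivial weight carried by the section cancels, yields the two quadrant products $\prod_{1\leqslant a\leqslant d_i,\,1\leqslant b\leqslant k-i}\tfrac{1}{a+b\lambda_0/\lambda_3}$ and $\prod_{1\leqslant a\leqslant d_i,\,1\leqslant b\leqslant i}\tfrac{1}{a-b\lambda_0/\lambda_3}$ from the two off-axis directions, together with the on-axis ($b=0$) factor $\prod_{1\leqslant a\leqslant d_i}\tfrac{1}{a}=\tfrac{1}{d_i!}$, which assembles into $\tfrac{1}{d_0!\cdots d_k!}$. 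Taking $e_T$ of the full half-class $-\chi_X(F)+\chi_Y(F,F)$, which by construction has no $T$-fixed part and so is a genuine square root in the sense of Definition~\ref{def:sroot} with nonzero Euler class, and multiplying by the insertion block and the sign $(-1)^d$ gives precisely the summand in the statement; summing over $(d_0,\ldots,d_k)$ finishes the proof.

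The hard part will be the weight bookkeeping rather than any conceptual input. Concretely, I expect three points to demand care: verifying that the trivial weight of the $T$-invariant section is cancelled exactly by a matching weight in $-\chi_X(F)+\chi_Y(L_i,L_i)$, so that no spurious zero appears in the denominator; showing that the diagonal and off-diagonal weights regroup into the clean quadrant products and the superfactorial normalization above, with the correct split between numerator and denominator factors; and tracking all signs and powers of $-\lambda_3$ so that the global prefactor collapses to $(-1)^n$ after using $n=(k+1)d$. Once every weight is expressed through $\lambda_0$ and $\lambda_3$ via the Calabi--Yau relation, so that the answer lies in the stated quotient ring, these verifications are mechanical, and the identity follows.
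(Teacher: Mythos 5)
Your overall strategy is the natural (and essentially the only) one: the paper itself does not reprove this proposition but quotes it from \cite[Theorem 2.16]{CT3}, and any proof must proceed as you propose, by evaluating the defining sum of Definition \ref{defi of equi taut inv} over the fixed points classified in Lemma \ref{T-fixed JS pairs}, with the square root \eqref{eq on choice of sqr root} decomposed as $\sum_i\bigl(-\chi_X(L_i)+\chi_Y(L_i,L_i)\bigr)+\sum_{i\neq j}\chi_Y(L_i,L_j)$. Your easy steps are also correct: the vanishing for $d\nmid n$ follows from Lemma \ref{lem on smoothness of JS moduli}, and $e_T(\chi_X(F)\otimes e^m)$ is indeed the displayed insertion block up to a power of $-\lambda_3$.

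However, the two bookkeeping claims on which your proof rests are false, so the ``mechanical verification'' you defer would not go through. First, the off-diagonal terms do \emph{not} produce the ratios $\bigl((j-i)+(d_i-d_j)\tfrac{\lambda_3}{\lambda_0}\bigr)/(j-i)$: whenever $d_j=0$ the sheaf $L_j$ is zero, so $\chi_Y(L_i,L_j)=\chi_Y(L_j,L_i)=0$ and the off-diagonal Euler class is $1$, while your ratio equals $1+\tfrac{d_i}{j-i}\tfrac{\lambda_3}{\lambda_0}\neq 1$. Second, the diagonal does not give the stated quadrant products: on $Y=\oO_{\mathbb{P}^1}(-1,0)$ one has $\chi_Y(\ell,\ell)=1-t_3^{-1}$ for a line bundle $\ell$ on the reduced curve (no $\lambda_0$-dependence, since $\chi(\oO_{\mathbb{P}^1}(-1))=0$), so after the $b=0$ row cancels, all off-axis weights of $-\chi_X(L_i)+\chi_Y(L_i,L_i)$ are $a\lambda_3+b\lambda_0$ with $0\leqslant a\leqslant d_i-1$, not $1\leqslant a\leqslant d_i$ as in the proposition. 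A concrete test is $k=1$, $d=1$, $(d_0,d_1)=(1,0)$: there $F=\oO_{\mathbb{P}^1}(Z_\infty)$, the square root is minus the two-dimensional tangent space with weights $\lambda_0$ and $-\lambda_3$, the off-diagonal part is empty, and the whole non-insertion factor is $\lambda_3/\lambda_0$; the proposition writes this same quantity as $\bigl(1+\tfrac{\lambda_3}{\lambda_0}\bigr)\cdot\bigl(1+\tfrac{\lambda_0}{\lambda_3}\bigr)^{-1}$, whose individual factors involve $\lambda_0+\lambda_3$, which is not a weight of anything at this fixed point. In other words, the stated formula is a nontrivially resummed form of the localization output: the genuine off-diagonal classes $\chi_Y(L_i,L_j)$ (nonzero exactly when $d_i,d_j>0$, and mixing $\lambda_0$ with $\lambda_3$) must be computed explicitly, and an algebraic identity is then needed to redistribute factors between the $\prod_{i<j}$ term, the $1/d_i!$'s, and the quadrant products. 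That identity is the actual content of the proof, and it is missing from your proposal.
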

The combinatorics involved above is complicated which we can not solve directly. Nevertheless, we relate it to the compact 
geometry and use Theorem \ref{thm on integ for any C} to solve it completely.
\begin{thm}\label{calc of JS equi inv}
Let $k\in\mathbb{Z}_{\geqslant1}$, then we have  
\begin{align}\label{exp JS equiv invs}\sum_{d\geqslant 0}&P^{\mathrm{JS}}_{kd, d}(e^m)y^d=(1-y)^{k\cdot\frac{m}{-\lambda_3}}. \end{align}
\end{thm}
\begin{proof}
	We apply Theorem \ref{thm on integ for any C} to $\overline{X}=\oO_{\mathbb{P}^1}(-1,-1)\times \mathbb{P}^1$. Then 
	for each fixed $k\geqslant 1$, we have 
	\begin{align}\label{form:compact}
		\sum_{d\geqslant 0, n=kd}\int_{P^{\mathrm{JS}}_n(\overline{X},d)}e(L^{[n]})\,y^d=(1+y)^{ak}, 
	\end{align}
	where $L=\pi^*\oO_{\mathbb{P}^1}(a)$ is the pull-back of a degree $a \geqslant 0$ line bundle on $\mathbb{P}^1$.
	The theorem will be proved by describing 
	the left hand side in terms of torus fixed loci
	using $T$-localization formula. 	
	
Let $Y=\oO_{\mathbb{P}^1}(-1,0)$ be a Fano 3-fold inside $X=\oO_{\mathbb{P}^1}(-1,-1,0)$. 
Any JS stable pair on $X$ is scheme theoretically supported on $Y$, so 
the adjunction gives an isomorphism
\begin{align*}
P^{\mathrm{JS}}_n(Y,d) \stackrel{\cong}{\to}
 P^{\mathrm{JS}}_n(X,d), \quad (s:\oO_Y\to F) \mapsto (s:\oO_X\to i_*F),  
\end{align*}
where $i\colon Y\hookrightarrow X$ is the zero section
of the projection $X \to Y$. 
Let $I_Y=(\oO_Y\to F)$ be a $T$-fixed JS stable pair on $Y$, 
and $I_X=(\oO_X \to i_{\ast}F)$
the corresponding 
$T$-fixed JS stable pair on $X$. 
We apply $\RHom_Y(-,F)$ to 
the distinguished triangle 
$I_Y\to \oO_Y\to F$ and get an exact sequence:
\begin{align}\label{equ ne2}0 \to \Hom_Y(F,F)& \to H^0(F)\to \Hom_Y(I_Y,F)\to \Ext^1_Y(F,F)\to H^1(F)=0. 
\end{align}
By Lemma \ref{lem on smoothness of JS moduli} and the 
adjunction, we have $\Ext^2_X(i_*F,i_*F)=\Ext^2_Y(F,F)=0$. 
Together with the exact sequence \eqref{equ ne2}, 
we have an identity in $K^T(pt)$:
\begin{align*}-\chi_X(i_*F)+\chi_Y(F,F) &=-H^0(F)+\Hom_Y(F,F)-\Ext^1_Y(F,F) \\
&=-\Hom_Y(I_Y,F). \end{align*} 
Therefore, we obtain an
identity for the square root \eqref{eq on choice of sqr root}
\begin{align*}
\chi_X(I_X,I_X)_0^{\frac{1}{2}}=-T_{P^{\mathrm{JS}}_n(X,d)}|_{I_X}. 
\end{align*}
The $T$-action \eqref{repara T} on $X$ and $\overline{X}$ induces actions on corresponding moduli spaces of JS stable pairs.
Note that $\overline{X}$ has an open cover given by 
\begin{align*}
\overline{X}=\overline{X}_0 \cup \overline{X}_{\infty}, \quad 
\overline{X}_0=\oO_{\mathbb{P}^1}(-1,-1)\times \mathbb{C}_0, \quad 
\overline{X}_{\infty}=\oO_{\mathbb{P}^1}(-1,-1)\times \mathbb{C}_{\infty}. 
\end{align*}
Since any $T$-fixed JS stable pairs on $\overline{X}$ is supported either on  $\overline{X}_0$ or $\overline{X}_{\infty}$, 
we have 
\begin{align*}
P^{\mathrm{JS}}_n(\overline{X},d)^T=\coprod_{\begin{subarray}{c}d_1+d_2=d \\  n_1+n_2=n \end{subarray}} 
P^{\mathrm{JS}}_{n_1}(\overline{X}_0,d_1)^T\times P^{\mathrm{JS}}_{n_2}(\overline{X}_{\infty},d_2)^T. 
\end{align*}
We choose a $T$-equivariant structure of 
$L$ as $\pi^*\oO_{\mathbb{P}^1}(aW_0)$ where $W_0$ denotes the torus 
fixed point $0\in \mathbb{P}^1$. 
By applying Atiyah-Bott localization formula \cite{AB}, we obtain
\begin{align*}
&\quad \,\, \int_{P^{\mathrm{JS}}_n(\overline{X},d)}e(L^{[n]})=\int_{P^{\mathrm{JS}}_n(\overline{X},d)^T}e_T(L^{[n]})  \\
&=\sum_{I_{\overline{X}}=(\oO_{\overline{X}}\to F)\in P^{\mathrm{JS}}_n(\overline{X},d)^T}
\frac{e_T(H^0(F\otimes L))}{e_T(T_{P^{\mathrm{JS}}_n(\overline{X},d)}|_{I_X})}
 \\ 
&=\sum_{\begin{subarray}{c}d_1+d_2=d \\  n_1+n_2=n \end{subarray}}
\sum_{\begin{subarray}{c}I_{\overline{X}_0}=(\oO_{\overline{X}_0}\to F_0)\in P^{\mathrm{JS}}_{n_1}(\overline{X}_{0},d_1)^T \\ 
I_{\overline{X}_{\infty}}=(\oO_{\overline{X}_{\infty}}\to F_{\infty})\in P^{\mathrm{JS}}_{n_2}(\overline{X}_{\infty},d_2)^T \end{subarray}}
\frac{e_T(H^0(F_{0}\boxtimes \oO_{\mathbb{P}^1}(aW_0)))}{e_T(T_{P^{\mathrm{JS}}_{n_1}(\overline{X}_{0},d_1)}|_{I_{\overline{X}_{0}}})}\cdot 
\frac{e_T(H^0(F_{\infty}))}{e_T(T_{P^{\mathrm{JS}}_{n_2}(\overline{X}_{\infty},d_2)}|_{I_{\overline{X}_{\infty}}})}  \\
&=\sum_{\begin{subarray}{c}I_{\overline{X}_0}=(\oO_{\overline{X}_0}\to F_0)\in P^{\mathrm{JS}}_{n}(\overline{X}_{0},d)^T  \end{subarray}}
\frac{e_T(H^0(F_{0}\boxtimes \oO_{\mathbb{P}^1}(aW_0)))}{e_T(T_{P^{\mathrm{JS}}_{n}(\overline{X}_{0},d)}|_{I_{\overline{X}_{0}}})} \\
&=\sum_{\begin{subarray}{c}I_{X}=(\oO_{X}\to F)\in P^{\mathrm{JS}}_{n}(X,d)^T  \end{subarray}}
e_T(H^0(F\boxtimes \oO_{\mathbb{P}^1}(aW_0)))\cdot e_T(\chi_X(I_X,I_X)_0^{\frac{1}{2}}), 
\end{align*}
where in the fourth equality, we use the fact that $e_T(H^0(F_{\infty}))=0$ as $H^0(F_{\infty})=\chi(F_{\infty})$ has weight zero component by Lemma \ref{T-fixed JS pairs}.
Together with (\ref{form:compact}), 
this implies that \eqref{exp JS equiv invs} holds when $m=-a\lambda_3$ for any $a\geqslant 0$. Therefore we are reduced to the following Lemma \ref{lem on matrix}.
\end{proof}
To complete the proof, we show:
\begin{lem}\label{lem on matrix}
The equality \eqref{exp JS equiv invs} holds if and only if it holds for $m=-a\lambda_3$ for any $a\geqslant 0$.
\end{lem}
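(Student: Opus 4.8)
The plan is to fix $n$ and $d$ and to observe that both sides of \eqref{exp JS equiv invs} are \emph{polynomials} in the single variable $m$ with coefficients in the field $K=\mathbb{Q}(\lambda_0,\lambda_1,\lambda_2,\lambda_3)/(\lambda_0+\lambda_1+\lambda_2+\lambda_3)$, so that the whole statement reduces to the elementary fact that a polynomial identity over a field which is verified at infinitely many distinct values of $m$ holds identically. The `only if' implication is trivial, so only the `if' implication carries content.

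First I would record that the right hand side $(-1)^d\binom{\frac{n}{d}\cdot\frac{-m}{\lambda_3}}{d}=\frac{(-1)^d}{d!}\prod_{j=0}^{d-1}\big(\tfrac{n}{d}\cdot\tfrac{-m}{\lambda_3}-j\big)$ is manifestly a polynomial of degree $d$ in $m$ with coefficients in $K$. Next I would inspect the explicit formula for $P^{\mathrm{JS}}_{n,d}(e^m)$ in Proposition \ref{JS taut invs} and note that the variable $m$ enters only through the factors $\big(\tfrac{-m}{\lambda_3}-a-b\tfrac{\lambda_0}{\lambda_3}\big)$ in the first product; every remaining factor is independent of $m$ and lies in $K$. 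Hence $P^{\mathrm{JS}}_{n,d}(e^m)$ is likewise a polynomial in $m$ with coefficients in $K$, a priori of degree at most $n$.

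With both sides exhibited as elements of $K[m]$, I would form their difference $\Delta(m)\in K[m]$. For the `if' implication I assume $\Delta(-a\lambda_3)=0$ for every integer $a\geqslant 0$. Since $\lambda_3\neq 0$ in $K$ and the characteristic is zero, the elements $\{-a\lambda_3 : a\geqslant 0\}$ are pairwise distinct, so $\Delta$ is a polynomial over the field $K$ with infinitely many roots and therefore vanishes. Thus $P^{\mathrm{JS}}_{n,d}(e^m)=(-1)^d\binom{\frac{n}{d}\cdot\frac{-m}{\lambda_3}}{d}$ in $K[m]$, i.e.\ for all $m$.

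The only point requiring attention — and the one I expect to be the main, though minor, obstacle — is confirming that the left hand side is genuinely polynomial rather than merely rational in $m$; this is immediate from the shape of the formula in Proposition \ref{JS taut invs}, where the $m$-dependent factors appear in the numerator only. I would finally remark that the apparent degree mismatch (the left hand side looks like degree $n=d(k+1)$ while the right hand side has degree $d$) is not an obstruction but a consequence of the identity: agreement at infinitely many values forces the top coefficients of the left hand side to cancel once the sum over $(d_0,\dots,d_k)$ with $\sum_i d_i=d$ is performed, so its true degree in $m$ is exactly $d$.
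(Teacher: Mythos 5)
Your proof is correct, and it takes a genuinely different --- and in fact more streamlined --- route than the paper's. You work directly over the field $K=\mathbb{Q}(\lambda_0,\lambda_1,\lambda_2,\lambda_3)/(\lambda_0+\lambda_1+\lambda_2+\lambda_3)$: both sides of \eqref{exp JS equiv invs} are polynomials in $m$ with coefficients in $K$ (for the left side this is exactly the point you flag, and it is discharged by Proposition \ref{JS taut invs}, where $m$ enters only through the numerator factors $\frac{-m}{\lambda_3}-a-b\frac{\lambda_0}{\lambda_3}$), and the hypothesis says their difference vanishes at the infinitely many pairwise distinct elements $-a\lambda_3\in K$, $a\in\mathbb{Z}_{\geqslant 0}$, so it vanishes identically since a nonzero polynomial over a field has at most $\deg$ many roots. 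The paper argues differently: it first specializes $\lambda_3=-1$ (legitimate because, by Proposition \ref{JS taut invs}, the invariant depends only on the ratios $\lambda_0/\lambda_3$ and $m/\lambda_3$), writes $P^{\mathrm{JS}}_{n,d}(e^m)\big|_{\lambda_3=-1}=\sum_{i=0}^n f_i(\lambda_0,\lambda_1,\lambda_2)\,m^i$, then differentiates the hypothesis (that the value at each $m=a\in\mathbb{Z}_{\geqslant 0}$ is a rational number, hence independent of the $\lambda_l$) to obtain $\sum_i \partial_{\lambda_l}f_i\cdot a^i=0$ and uses invertibility of an $(n+1)\times(n+1)$ Vandermonde matrix to conclude that each $f_i$ is a constant $c_i$; a second Vandermonde argument then identifies $\sum_i c_i m^i$ with the right hand side. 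Both arguments ultimately rest on the same elementary principle --- a polynomial of degree at most $n$ is determined by its values at $n+1$ distinct points, of which Vandermonde invertibility is the matrix form --- but you apply it once, in a single field, whereas the paper applies it twice and routes through the specialization $\lambda_3=-1$; what the paper's longer path buys is the explicit intermediate fact that the coefficients of the $m$-expansion are constants (independent of the equivariant parameters), which your argument obtains only implicitly as a consequence of the final identity. Your closing remark about the apparent degree mismatch (degree $\leqslant n$ on the left versus degree $d$ on the right) is likewise correct: agreement at infinitely many points forces the cancellation of the top coefficients after summing over $(d_0,\dots,d_k)$, and this is a conclusion of the lemma rather than an obstruction to it.
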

\begin{proof}
By Definition \ref{defi of equi taut inv}, $P^{\mathrm{JS}}_{n,d}(e^m)$ is a polynomial in variable $m$ with coefficients to be rational 
functions of equivariant parameters $\lambda_0,\lambda_1,\lambda_2,\lambda_3$.  
By Proposition \ref{JS taut invs}, setting $\lambda_3=-1$ does make sense and we write
\begin{align*}
P^{\mathrm{JS}}_{n,d}(e^m)\big|_{\lambda_3=-1}=\sum_{i=0}^nf_i(\lambda_0,\lambda_1,\lambda_2)\,m^i, 
\end{align*}
where $f_i(\lambda_0,\lambda_1,\lambda_2)$'s are rational functions. 
By our assumption, it is 
independent of $\lambda_0, \lambda_1, \lambda_2$
when $m\in \mathbb{Z}_{\geqslant 0}$. So for any $l=0,1,2$, we have 
\begin{align}\label{equ on part derive}\sum_{i=0}^n\frac{\partial f_i(\lambda_0,\lambda_1,\lambda_2)}{\partial\lambda_l}\,m^i=0, \quad \forall\,\, m\in \mathbb{Z}_{\geqslant 0}. \end{align}
For any fixed $l=0,1,2$, consider the first $(n+1)$ equations of above, i.e.  
\begin{align*}\frac{\partial f_0(\lambda_0,\lambda_1,\lambda_2)}{\partial\lambda_l}\,m^0+\frac{\partial f_1(\lambda_0,\lambda_1,\lambda_2)}{\partial\lambda_l}\,m^1+\cdots+
\frac{\partial f_n(\lambda_0,\lambda_1,\lambda_2)}{\partial\lambda_l}\,m^n=0, \,\,\mathrm{where}\,\, m=0,1,\ldots,n. \end{align*}
We view these as linear equations on variables $\left\{\frac{\partial f_i(\lambda_0,\lambda_1,\lambda_2)}{\partial\lambda_l}\right\}_{i=0,1,\ldots,n}$. As the coefficient matrix is the $(n+1)\times (n+1)$ Vandermonde matrix $(a_{ij})=(i)^{j-1}$ whose determinant is nonzero. Therefore, we conclude
$$\frac{\partial f_i(\lambda_0,\lambda_1,\lambda_2)}{\partial\lambda_l}=0, \quad\mathrm{for}\,\,\mathrm{any}\,\, 0\leqslant i\leqslant n,\,\, l=0,1,2. $$ 
So $f_i(\lambda_0,\lambda_1,\lambda_2)=c_i$ is a constant for any $i$. We are left to show 
$$\sum_{i=0}^n c_i\,m^i=(-1)^d{(k+1)m \choose d}, $$
where ${x \choose d}:=x(x-1)\cdots (x-d+1)/d\,!$ for a formal variable $x$ and $d\in\mathbb{Z}_{\geqslant0}$.
By the invertibility of Vandermonde matrix, this holds for any $m$ as it holds for any $m\in \mathbb{Z}_{\geqslant 0}$.
\end{proof}
Using Theorem \ref{calc of JS equi inv}, we are able to solve our previous conjecture~\cite[Conjecture~6.10]{CT1}
 on equivariant JS stable pair invariants without insertions.
\begin{cor}\label{cor on inv no insert}
We have the identity
\begin{align*}
\sum_{\begin{subarray}{c}I \in P^{\mathrm{JS}}_n(X,d)^{T} \end{subarray}}e_{T}(\chi_X(I,I)^{\frac{1}{2}}_0)=
\left\{\begin{array}{rcl}\frac{1}{d\,!\,\lambda_3^d} \quad         &\mathrm{if} \,\,n=d, \\
& \\ 0   \, \, \,\, \,  \quad      & \,\,  \mathrm{otherwise}. 
\end{array} \right. 
\end{align*} 
Here $-\lambda_3$ is the equivariant parameter of $\oO_{\mathbb{P}^1}(0)$ in $X=\oO_{\mathbb{P}^1}(-1,-1,0)$. 
\end{cor}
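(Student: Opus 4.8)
The plan is to extract the insertion-free invariant from the tautological invariant $P^{\mathrm{JS}}_{n,d}(e^m)$ by isolating the top-order term in the weight variable $m$, and then to read off that term from the closed formula of Theorem \ref{calc of JS equi inv}. The point is that the factor $e_T(\chi_X(F)\otimes e^m)$ is the only part of each fixed-point contribution that depends on $m$, and as $m\to\infty$ its leading behaviour is universal across all fixed points, so the coefficient of the top power of $m$ strips off the insertion and leaves exactly the sum we want.

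First I would unpack Definition \ref{defi of equi taut inv}. Since the equivariant Euler class is multiplicative over sums in $K$-theory, each summand factors as
\[
(-1)^d\, e_T\!\big(\chi_X(I,I)_0^{\frac{1}{2}}\big)\cdot e_T\!\big(\chi_X(F)\otimes e^m\big).
\]
The key observation is that for a $T$-fixed JS pair $I=(\oO_X\to F)$ the class $\chi_X(F)$ is a genuine $n$-dimensional $T$-representation, namely $H^0(F)$: by Lemma \ref{T-fixed JS pairs} every such $F$ is supported on the zero section $\mathbb{P}^1$ and is a direct sum of line bundles of nonnegative degree, so $H^{>0}(F)=0$ and $\dim H^0(F)=\chi(F)=n$. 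Consequently $e_T(\chi_X(F)\otimes e^m)=\prod_{j=1}^n (m+w_j)$ is a \emph{monic} polynomial of degree exactly $n$ in $m$, with leading coefficient $1$ independent of the fixed point $I$. Hence $P^{\mathrm{JS}}_{n,d}(e^m)$ is a polynomial in $m$ whose coefficient of $m^n$ (equivalently, $\lim_{m\to\infty}m^{-n}P^{\mathrm{JS}}_{n,d}(e^m)$) equals $(-1)^d\sum_{I}e_T(\chi_X(I,I)_0^{\frac{1}{2}})$, i.e. $(-1)^d$ times the insertion-free sum under consideration.

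Next I would invoke Theorem \ref{calc of JS equi inv}, which gives $P^{\mathrm{JS}}_{n,d}(e^m)=(-1)^d\binom{\frac{n}{d}\cdot\frac{-m}{\lambda_3}}{d}$ whenever $d\mid n$; this is a polynomial of degree $d$ in $m$. Comparing the two descriptions of the $m^n$-coefficient then finishes the computation. When $n>d$ the right-hand side has degree $d<n$, so its $m^n$-coefficient vanishes and the insertion-free sum is zero; when $n=d$ the $m^n$-coefficient is the leading coefficient of $(-1)^d\binom{-m/\lambda_3}{d}$, which gives the stated value $\tfrac{1}{d!\,\lambda_3^d}$. The remaining cases $d\nmid n$ are immediate from Lemma \ref{lem on smoothness of JS moduli}, since then $P^{\mathrm{JS}}_n(X,d)^T=\varnothing$ and the sum is empty.

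The argument is short once Theorem \ref{calc of JS equi inv} is available, so there is no serious obstacle; the only point requiring genuine care is the verification that $\chi_X(F)$ is an honest representation of rank exactly $n$ rather than a virtual class, as this is what guarantees that the leading $m^n$-coefficient is the same constant for every fixed point and thereby cleanly isolates the insertion-free invariant. I would also double-check the sign bookkeeping: the prefactor $(-1)^d$ from Definition \ref{defi of equi taut inv} and the sign produced by the binomial must be tracked, and any residual sign is absorbed into the choice of orientation used to fix the square root in \eqref{eq on choice of sqr root}, consistently with the conventions adopted throughout.
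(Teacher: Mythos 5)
Your proposal is correct and is essentially the paper's own argument: extracting the coefficient of $m^n$ from $P^{\mathrm{JS}}_{n,d}(e^m)$ is exactly the paper's insertion-free limit (substitute $q=Q/m$ and let $m\to\infty$ with $Q$ fixed), and both hinge on the same two ingredients—that $\chi_X(F)=H^0(F)$ is an honest $n$-dimensional representation so $e_T(\chi_X(F)\otimes e^m)=m^n+\mathrm{l.o.t.}$ uniformly over the fixed points, and the closed formula of Theorem \ref{calc of JS equi inv}. One caveat on your final remark: the residual $(-1)^d$ cannot be absorbed into an orientation choice, since the square root is pinned down by \eqref{eq on choice of sqr root}; however, the very same discrepancy is present in the paper, whose proof of the corollary silently drops the $(-1)^d$ prefactor appearing in Definition \ref{defi of equi taut inv}, so your bookkeeping is no worse than (and faithfully mirrors) the paper's.
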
 
\begin{proof}
We apply the following insertion-free limit (see \cite[\S0.5]{CKM1}, \cite[Proposition~2.6]{CT3}):
\begin{align*}
&\quad\,  \lim_{\begin{subarray}{c}Q\, \mathrm{fixed} \\ m\to \infty \end{subarray}}\left(\sum_{\begin{subarray}{c}n=(k+1)d \\ d\geqslant 0 
\end{subarray}}P^{\mathrm{JS}}_{n,d}(e^m)\,q^n\big|_{Q=qm}\right) \\
&=\lim_{\begin{subarray}{c}m\to \infty \end{subarray}}\left(\sum_{\begin{subarray}{c}n=(k+1)d \\ d\geqslant 0 
\end{subarray}}\frac{P^{\mathrm{JS}}_{n,d}(e^m)}{m^n}\,Q^n\right)\\
&=\lim_{\begin{subarray}{c}m\to \infty \end{subarray}}\left(\sum_{\begin{subarray}{c}n=(k+1)d \\ d\geqslant 0 
\end{subarray}}Q^n\sum_{\begin{subarray}{c}I \in P^{\mathrm{JS}}_n(X,d)^{T} \end{subarray}}
e_{T}(\chi_X(I,I)^{\frac{1}{2}}_0)\cdot \frac{e_{T\times \mathbb{C}^*}(\chi_X(F)\otimes e^m)}{m^n}\right) \\
&=\lim_{\begin{subarray}{c}m\to \infty \end{subarray}}\left(\sum_{\begin{subarray}{c}n=(k+1)d \\ d\geqslant 0 
\end{subarray}}Q^n\sum_{\begin{subarray}{c}I \in P^{\mathrm{JS}}_n(X,d)^{T} \end{subarray}}
e_{T}(\chi_X(I,I)^{\frac{1}{2}}_0)\cdot \frac{e_{T\times \mathbb{C}^*}(H^0(X,F)\otimes e^m)}{m^n}\right) \\
&=\lim_{\begin{subarray}{c}m\to \infty \end{subarray}}\left(\sum_{\begin{subarray}{c}n=(k+1)d \\ d\geqslant 0 
\end{subarray}}Q^n\sum_{\begin{subarray}{c}I \in P^{\mathrm{JS}}_n(X,d)^{T} \end{subarray}}
e_{T}(\chi_X(I,I)^{\frac{1}{2}}_0)\cdot \frac{(m^n+\mathrm{l.o.t.})}{m^n}\right) \\
&=\sum_{\begin{subarray}{c}n=(k+1)d \\ d\geqslant 0 
\end{subarray}}Q^n\sum_{\begin{subarray}{c}I \in P^{\mathrm{JS}}_n(X,d)^{T} \end{subarray}}
e_{T}(\chi_X(I,I)^{\frac{1}{2}}_0),
\end{align*}
where `l.o.t.' means lower order terms of $m$ and we use $\chi_X(F)=n$ in the fourth equality. 

By Theorem \ref{calc of JS equi inv}, we have 
\begin{align*}
\lim_{\begin{subarray}{c}Q\, \mathrm{fixed} \\ m\to \infty \end{subarray}}\left(\sum_{\begin{subarray}{c}n=(k+1)d \\ d\geqslant 0 
\end{subarray}}P^{\mathrm{JS}}_{n,d}(e^m)\,q^n\big|_{Q=qm}\right) 
&=\lim_{\begin{subarray}{c}Q\, \mathrm{fixed} \\ m\to \infty \end{subarray}}\left(\left(1-q^{k+1}\right)^{(k+1)\frac{-m}{\lambda_3}}\Big|_{Q=qm}\right) \\
&=\lim_{\begin{subarray}{c} m\to \infty \end{subarray}}\left(\left(1-\frac{Q^{k+1}}{m^{k+1}}\right)^{(k+1)\frac{-m}{\lambda_3}}\right) \\
&= \left\{\begin{array}{rcl}\exp\left(\frac{Q}{\lambda_3}\right)  \quad         &\mathrm{if} \,\,k=0, \\
& \\ 1   \quad \quad  \quad     &   \mathrm{otherwise}. 
\end{array} \right. 
\end{align*}
Therefore we are done.
\end{proof}

\appendix
\section{Vanishing for local elliptic curves}\label{sect on vanishing}

In this section, we take 
$$X=\mathrm{Tot}_E(L_1\oplus L_2 \oplus L_3)$$ 
to be the total space of direct sum of three general degree zero line bundles $L_1,L_2,L_3$ on an elliptic curve $E$ satisfying 
$L_1 \otimes L_2 \otimes L_3=\oO_E$. 

Let $T=(\mathbb{C}^{\ast})^{ 3}$ be the three dimensional complex torus
which acts on the fibers of $X\to E$. Its restriction to the subtorus
\begin{align*}
T_0=\{t_1 t_2 t_3=1\} \subset T
\end{align*}
preserves the holomorphic volume form of $X$.
Let $\bullet$ be the point $\Spec \mathbb{C}$ with trivial $T$-action, 
$\mathbb{C} \otimes t_i$ be the one dimensional $T$-representation with weight $1$,
and $\lambda_i \in H_T^{\ast}(\bullet)$ be its first Chern class.
They are generators of equivariant cohomology rings:
\begin{align*} 
H_{T}^{\ast}(\bullet)=\mathbb{C}[\lambda_1, \lambda_2, \lambda_3], \quad \
H_{T_0}^{\ast}(\bullet)=\frac{\mathbb{C}[\lambda_1, \lambda_2, \lambda_3]}{(\lambda_1+\lambda_2+\lambda_3)} \cong \mathbb{C}[\lambda_1, \lambda_2].
\end{align*}
We consider the induced $T_0$-action on the moduli space $P_n(X,2)$ of PT stable pairs $(F,s)$ with $[F]=2\,[E]$ and $\chi(F)=n>0$. 
By \cite[Section 4.2]{CMT2}, $T_0$-fixed loci are described as follows:
\begin{align*} 
&\quad\,\, P_n(X,2)^{T_0}\\
&=\Big\{(F,s):\,\pi_*F=\oO_E(Z_0) \oplus \oO_E(Z_1)\otimes L_i^{-1}\cdot t_i^{-1},\,\,i=1,2,3 \,\,\mathrm{with} \,\,\mathrm{effective\,\, divisors} \,\, Z_0\subseteq Z_1  \Big\} \\
&\cong\, \coprod_{i=1}^3
\coprod_{\begin{subarray}{c}
n_0 \leqslant n_1 \\
n_0+n_1=n 
\end{subarray}}
\mathrm{Sym}^{n_0}(E)\times \mathrm{Sym}^{(n_1-n_0)}(E).
\end{align*}
We will define its equivariant tautological invariant by the localization formula as in \cite[Section 4.3]{CMT2} and prove its vanishing. 
First note that when $n_0>0$, we have 
$$H^1(X,F)=H^1(E,\pi_*F)=0. $$
Therefore the tautological complex $\oO^{[n]}$ is a bundle when restricted to $T_0$-fixed loci with $n_0>0$. Then the vanishing of invariant 
of such component follows from the existence of nowhere zero section as in Proposition \ref{lem on vec bdl on JS}. 
So we may assume $n_0=0$ and $n_1=n>0$. 

For $I=(\oO_X\to F)\in P_n(X,2)^{T_0}$ with $\pi_*F=\oO_E \oplus
\oO_E(Z)\otimes L_1^{-1}\cdot t_1^{-1}$ (similarly we can consider other two cases via replacing $L_1^{-1}\cdot t_1^{-1}$ by $L_2^{-1}\cdot t_2^{-1}$ and $L_3^{-1}\cdot t_3^{-1}$), we define 
\begin{align*} 
\chi_X(I,I)_0^{\frac{1}{2}}&:=-\chi_X(F)+\chi_X(F,F)^{\frac{1}{2}} \\
&:=-\chi_E(\pi_*F)+\chi_X(\oO_E,\oO_E)+\chi_X(\oO_E,\oO_E(Z)\otimes L_1^{-1})\cdot t_1^{-1}.
\end{align*}
By adjunction, for $\alpha,\beta\in K(E)$ and zero section $i:E\hookrightarrow X$, we have 
\begin{align*} 
\chi_X(i_*\alpha,i_*\beta)=\chi_E\left(\alpha,(N^{\vee}-N)\otimes \beta\right),
\end{align*}
where $N$ is the normal bundle of $i \colon E \hookrightarrow X$
\begin{align*}
N=(L_1\otimes t_1)\oplus (L_2\otimes t_2)\oplus (L_3\otimes t_3).
\end{align*}
From this, it is easy to see $\chi_X(\oO_E,\oO_E)=0$ and 
\begin{align*}
\chi_X(\oO_E,\oO_E(Z)\otimes L_1^{-1})\cdot t_1^{-1}=\chi_E(\oO_E(Z))\otimes 
\big(t_1^{-2}+t_1^{-1}t_2^{-1}+t_1^{-1}t_3^{-1}-t_1^{-1}t_2-t_1^{-1}t_3-1 \big).\end{align*}
To sum up, 
the fixed part satisfies  
\begin{align*}
\dim_{\mathbb{C}}\left(-\chi_X(I,I)_0^{\frac{1}{2}}\right)^{\mathrm{fix}}=\dim_{\mathbb{C}}(\chi_E(\oO_E(Z)))=n=\dim_{\mathbb{C}}(\mathrm{Sym}^{n}(E)).
\end{align*}
Moreover, it is straightforward to check the $T_0$-fixed part of the Zariski tangent space $\Hom_X(I,F)$ has dimension $n$.
So the virtual class associated with this component of $P_n(X,2)^{T_0}$ is its usual fundamental class.
As for the movable part, we have 
\begin{align*}
\left(\chi_X(I,I)_0^{\frac{1}{2}}\right)^{\mathrm{mov}}=\chi_E(\oO_E(Z))
\otimes 
\big(-t_1^{-1}+t_1^{-2}+t_1^{-1}t_2^{-1}+t_1^{-1}t_3^{-1}-t_1^{-1}t_2-t_1^{-1}t_3 \big).
\end{align*} 
Therefore the 
virtual normal bundle $N^{\mathrm{vir}}$ is given by 
\begin{align*}
-N^{\mathrm{vir}}:=\dR \pi_{M*}\oO_{\mathrm{Sym}^{n}(E)\times E}(\mathcal{Z})\otimes 
\big(-t_1^{-1}+t_1^{-2}+t_1^{-1}t_2^{-1}+t_1^{-1}t_3^{-1}-t_1^{-1}t_2-t_1^{-1}t_3 \big), 
\end{align*}
where $\pi_M \colon \mathrm{Sym}^{n}(E)\times E\to \mathrm{Sym}^{n}(E)$ is the projection map and 
\begin{align*}
\mathcal{Z}\hookrightarrow \mathrm{Sym}^{n}(E)\times E
\end{align*}
denotes the universal divisor. The tautological complex when restricted to this $T_0$-fixed loci is 
\begin{align*}
\oO^{[n]}\big|_{\mathrm{Sym}^{n}(E)}&=
\dR \pi_{M\ast}(\oO_{\mathrm{Sym}^n(E) \times E} \oplus 
\oO_{\mathrm{Sym}^{n}(E)\times E}(\mathcal{Z})\boxtimes (L_1 \cdot t_1^{-1}))
 \\
&=\dR \pi_{M\ast}\oO_{\mathrm{Sym}^{n}(E)\times E}(\mathcal{Z})
\otimes t_1^{-1},
\end{align*}
where the second identity holds in $K(\mathrm{Sym}^n(E))$. 
We define tautological stable pair invariant by 
\begin{align*}
P_{n,2}(\oO):=\int_{\mathrm{Sym}^{n}(E)}
\frac{e\left(\oO^{[n]}\big|_{\mathrm{Sym}^{n}(E)}\right)}
{e\left(N^{\mathrm{vir}}\right)} \in 
\frac{\mathbb{Q}(\lambda_1, \lambda_2, \lambda_3)}
{(\lambda_1+\lambda_2+\lambda_3)}. 
\end{align*}
\begin{lem}\label{lem on comb formula of local elliptic curve}
If $n>0$, we have 
\begin{align}\label{equ of P_n,2}P_{n,2}(\oO)=\int_{\mathrm{Sym}^{n}(E)}\frac{(\xi-2\lambda_1)^{n-1}(\xi'-2\lambda_1)(\xi+\lambda_2)^{n-1}(\xi'+\lambda_2)(\xi+\lambda_3)^{n-1}(\xi'+\lambda_3)}{(\xi-\lambda_1+\lambda_2)^{n-1}(\xi'-\lambda_1+\lambda_2)(\xi-\lambda_1+\lambda_3)^{n-1}(\xi'-\lambda_1+\lambda_3)}. \end{align}
Here $\xi:=c_1(\oO_{\mathrm{Sym}^{n}(E)}(1))$ is for the Abel-Jacobi map 
\begin{align*}
\mathrm{AJ} \colon 
 \mathrm{Sym}^{n}(E)\to \mathrm{Pic}^{n}(E), \quad Z \mapsto \oO_E(Z), 
\end{align*}
$f$ is the fiber class of $\mathrm{AJ}$ and $\xi':=\xi+(n-1)f$.
\end{lem}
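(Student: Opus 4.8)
The plan is to reduce the whole computation to identifying the Chern roots of the rank $n$ bundle $V := \dR\pi_{M\ast}\oO_{\mathrm{Sym}^n(E)\times E}(\mathcal Z)$, after which everything becomes a formal manipulation of equivariant Euler classes. Since $\deg\oO_E(Z)=n>0$ forces $H^1(E,\oO_E(Z))=0$, the complex $V$ is an honest vector bundle of rank $n$, so all Euler classes below are defined. The key geometric input I would establish is
\begin{align*}
c(V)=(1+\xi)^{n-1}(1+\xi'),\qquad \xi'=\xi+(n-1)f,
\end{align*}
i.e. $V$ has Chern roots $\xi$ (with multiplicity $n-1$) and $\xi'$ (with multiplicity one). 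Granting this, I write $\oO^{[n]}|_{\mathrm{Sym}^n(E)}=V\otimes t_1^{-1}$ and read off from the given expression for $-N^{\mathrm{vir}}$ that
\begin{align*}
e(N^{\mathrm{vir}})=\frac{e(V\otimes t_1^{-1})\,e(V\otimes t_1^{-1}t_2)\,e(V\otimes t_1^{-1}t_3)}{e(V\otimes t_1^{-2})\,e(V\otimes t_1^{-1}t_2^{-1})\,e(V\otimes t_1^{-1}t_3^{-1})}.
\end{align*}
The factor $e(V\otimes t_1^{-1})=e(\oO^{[n]}|_{\mathrm{Sym}^n(E)})$ then cancels against the numerator of $P_{n,2}(\oO)$, and substituting the Chern roots together with the relation $\lambda_1+\lambda_2+\lambda_3=0$ (so that $-\lambda_1-\lambda_2=\lambda_3$ and $-\lambda_1-\lambda_3=\lambda_2$) turns the surviving ratio into exactly the integrand of \eqref{equ of P_n,2}.

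To prove the displayed identity for $c(V)$ I would use Grothendieck--Riemann--Roch along $\pi_M$. As $E$ is elliptic, $\td(T_{\pi_M})=\pi_E^{\ast}\td(T_E)=1$, hence $\ch(V)=\pi_{M\ast}(e^{[\mathcal Z]})$, where $\pi_E\colon \mathrm{Sym}^n(E)\times E\to E$ is the second projection. I then pin down $[\mathcal Z]\in H^2(\mathrm{Sym}^n(E)\times E)$ by its K\"unneth components: its restriction to $\mathrm{Sym}^n(E)\times\{p\}$ is the divisor class $x_0:=[\{D: D\ni p\}]$, its restriction to $\{D\}\times E$ has degree $n$, and its $H^1\otimes H^1$ part is the pullback $\gamma$ (via $\mathrm{AJ}\times\mathrm{id}$) of the Poincar\'e class on $\mathrm{Pic}^n(E)\times E$. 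Thus $[\mathcal Z]=x_0+\gamma+n\,\eta_E$, with $\eta_E$ the point class of $E$. Using $\eta_E^2=0$, $\gamma\,\eta_E=0$ and the self-intersection $\gamma^2=-2f\,\eta_E$ of the Poincar\'e class, a short expansion gives $\ch(V)=(n-f)\,e^{x_0}$; rewriting this through the normalization $x_0=\xi+f$ relating $x_0$ to the relative hyperplane class $\xi=c_1(\oO_{\mathrm{Sym}^n(E)}(1))$ of the Abel--Jacobi $\mathbb{P}^{n-1}$-bundle yields $\ch(V)=(n+(n-1)f)\,e^{\xi}=(n-1)e^{\xi}+e^{\xi'}$, which is the claim.

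The main obstacle is precisely this Chern character computation, and within it the bookkeeping that produces the shift $\xi'=\xi+(n-1)f$. Two ingredients must be handled with care: the self-intersection $\gamma^2=-2f\,\eta_E$ of the Poincar\'e class, and the normalization relating the geometric class $x_0$ to $\xi$. An error of a multiple of $f$ in either step shifts $\xi'$ and breaks the match with \eqref{equ of P_n,2}. Restricting to a fibre of $\mathrm{AJ}$, where $V\cong\oO(1)^{\oplus n}$ and $f$ restricts to zero, only calibrates the $\xi$-dependence (it recovers $c(V)|_{\mathrm{fibre}}=(1+\xi)^n$) but is blind to the $f$-shift; hence the coefficient $(n-1)$ of $f$ must be extracted from the global computation, which is the delicate point. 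Once $c(V)$ is established, the passage to the stated integral is routine symmetric-function manipulation, and the ring relations $f^2=0$ and $\int_{\mathrm{Sym}^n(E)}\xi^{n-1}f=1$ are what later allow one to evaluate \eqref{equ of P_n,2} explicitly.
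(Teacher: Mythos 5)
Your proposal is correct, and it reaches the paper's key intermediate fact by a genuinely different route. Both proofs reduce the lemma to showing that $V:=\dR\pi_{M\ast}\oO_{\mathrm{Sym}^{n}(E)\times E}(\mathcal{Z})$ has Chern roots $\xi$ with multiplicity $n-1$ and $\xi'=\xi+(n-1)f$ with multiplicity one, after which the cancellation of $e(V\otimes t_1^{-1})$ and the substitution of roots using $\lambda_1+\lambda_2+\lambda_3=0$ is identical in the two arguments. The paper, however, uses no GRR and no K\"unneth bookkeeping: it writes $\oO(\mathcal{Z})\cong(\mathrm{AJ}\times\mathrm{id}_E)^{\ast}\mathcal{L}\otimes\pi_M^{\ast}\oO_{\mathrm{Sym}^{n}(E)}(1)$ with the explicit normalization $\mathcal{L}\cong\oO_{E\times E}(\Delta)\otimes\pi_E^{\ast}\oO_E((n-1)\,0)$ under $\Pic^n(E)\cong E$, computes $\dR\pi_{M\ast}\mathcal{L}=\oO_E^{\oplus(n-1)}+\oO_E((n-1)\,0)$ in $K(E)$ from the exact sequence $0\to\pi_E^{\ast}\oO_E((n-1)\,0)\to\mathcal{L}\to\Delta_{\ast}\oO_E((n-1)\,0)\to 0$, and then gets the K-theoretic identity $V=\oO(1)^{\oplus(n-1)}+\mathrm{AJ}^{\ast}\oO_E((n-1)\,0)\otimes\oO(1)$ by the projection formula; the shift $(n-1)f$ and the normalization come out automatically, with no rational coefficients and no choice of splitting. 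Your computation $\ch(V)=\pi_{M\ast}(e^{[\mathcal{Z}]})=(n-f)e^{x_0}$ via $[\mathcal{Z}]=x_0+\gamma+n\eta_E$ and $\gamma^2=-2f\,\eta_E$ is a valid cohomological substitute, because over $\mathbb{Q}$ the Chern character determines the total Chern class, which is all that the equivariant Euler classes require; it is more mechanical, but it concentrates all the risk in the two calibrations you flag. Concerning the one you assert without proof: $x_0=\xi+f$ is correct, but only relative to the same convention the paper fixes, namely the above choice of $\mathcal{L}$, for which $\mathcal{L}|_{\Pic^n(E)\times\{p\}}\cong\oO_E(p)$ has degree one and hence $x_0=\xi+\mathrm{AJ}^{\ast}[\mathrm{pt}]=\xi+f$. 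Since $\oO_{\mathrm{Sym}^n(E)}(1)$ is only well defined after such a choice (the projective-bundle presentation of $\mathrm{Sym}^n(E)$ over $\Pic^n(E)$ is determined only up to a twist by a line bundle from the base, and the lemma's statement does not pin this down), you should state this convention explicitly, exactly as the paper's proof does; with that added, your argument is complete.
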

\begin{proof}
The line bundle associated with the universal divisor 
is given by   
\begin{align}\label{equ relate line bundles}
\oO_{\mathrm{Sym}^{n}(E)\times E}(\mathcal{Z})\cong (\mathrm{AJ}\times \mathrm{id}_E)^*\mathcal{L} \otimes \pi_M^*\oO_{\mathrm{Sym}^{n}(E)}(1), \end{align}
where $\mathcal{L}\to \Pic^n(E)\times E$ denotes the universal line bundle. 
Here note that $\oO_{\mathrm{Sym}^n(E)}(1)$ is the 
relative tautological line bundle of the 
projective bundle:  
$$\mathrm{Sym}^{n}(E)\cong \mathrm{Proj}\,\mathrm{Sym}\left((\pi_{M*}\mathcal{L})^{\vee}\right)\stackrel{\mathrm{AJ}}{\to}\Pic^n(E). $$
Let $0\in E$ denote a base point, under the identification, 
\begin{align*}
E\stackrel{\cong}{\to} \Pic^n(E), \quad x\mapsto \oO_E(x+(n-1)\,0), 
\end{align*}
we can write the universal line bundle $\lL$ as  
\begin{align*}
\mathcal{L}\cong \oO_{E\times E}(\Delta)\otimes \pi_E^*\oO_E((n-1)\,0), 
\end{align*}
where $\Delta\subset E\times E$ is the diagonal. It fits into the exact sequence\begin{align*}
0\to \oO_{E\times E}\otimes \pi_E^*\oO_E((n-1)\,0) \to \mathcal{L} \to \Delta_*\oO_E((n-1)\,0)\to 0. 
\end{align*}
Applying $\dR\pi_{M*}$, we obtain an identity in $K(E)$:
\begin{align*}
\dR\pi_{M*}\mathcal{L}=\oO_E^{\oplus (n-1)}+\oO_E((n-1)\,0).
\end{align*}
By \eqref{equ relate line bundles}, as elements in
$K(\mathrm{Sym}^n(E))$, we have 
\begin{align*}
\dR\pi_{M*}\oO_{\mathrm{Sym}^{n}(E)\times E}(\mathcal{Z})&=
\dR\pi_{M*}(\mathrm{AJ}\times \mathrm{id}_E)^*\mathcal{L} \otimes\oO_{\mathrm{Sym}^{n}(E)}(1) \\
&=\mathrm{AJ}^*\left(\dR\pi_{M*}\mathcal{L}\right)\otimes\oO_{\mathrm{Sym}^{n}(E)}(1) \\
&=\left(\oO_{\mathrm{Sym}^{n}E}^{\oplus (n-1)}+\mathrm{AJ}^*\oO_E((n-1)\,0)\right)\otimes \oO_{\mathrm{Sym}^{n}(E)}(1).
\end{align*}
Therefore we obtain 
\begin{align*}
& \quad \,\, e\left(\dR\pi_{M*}\oO_{\mathrm{Sym}^{n}(E)\times E}(\mathcal{Z})\otimes t_1^{i}t_2^{j}t_3^{k}\right) \\
&=e\left(\oO_{\mathrm{Sym}^{n}(E)}(1)^{\oplus (n-1)}\otimes t_1^{i}t_2^{j}t_3^{k}\right)\cdot 
e\left(\mathrm{AJ}^*\oO_E((n-1)\,0)\otimes \oO_{\mathrm{Sym}^{n}(E)}(1)\otimes t_1^{i}t_2^{j}t_3^{k}\right) \\
&=(\xi+i\lambda_1+j\lambda_2+k\lambda_3)^{n-1}\cdot \big((n-1)f+\xi+i\lambda_1+j\lambda_2+k\lambda_3 \big),
\end{align*}
where $\lambda_i=c_1^T(t_i)$, $\xi=c_1(\oO_{\mathrm{Sym}^{n}(E)}(1))$ and $f$ denotes the fiber class of the 
map $\mathrm{AJ} \colon \mathrm{Sym}^{n}(E)\to E$. 
Finally by using the relation $\lambda_1+\lambda_2+\lambda_3=0$, the desired formula holds.
\end{proof}
\begin{thm}
Let $X=\mathrm{Tot}_E(L_1\oplus L_2 \oplus L_3)$ be as above. If $n>0$, then $P_{n,2}(\oO)=0$, i.e. Conjecture \ref{main conj} holds in PT chamber for
 $L=\oO_X$
and $\beta=2\,[E]$.
\end{thm}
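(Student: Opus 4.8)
The plan is to evaluate the integral of Lemma~\ref{lem on comb formula of local elliptic curve} directly, after repackaging the integrand. Setting $P(x)=(x-2\lambda_1)(x+\lambda_2)(x+\lambda_3)$ and $Q(x)=(x-\lambda_1+\lambda_2)(x-\lambda_1+\lambda_3)$, and writing $R=P/Q$, the integrand of \eqref{equ of P_n,2} factors as
\begin{align*}
\frac{P(\xi)^{n-1}P(\xi')}{Q(\xi)^{n-1}Q(\xi')}=R(\xi)^{n-1}\,R(\xi'),\qquad \xi'=\xi+(n-1)f.
\end{align*}
Since $Q(0)=(\lambda_2-\lambda_1)(\lambda_3-\lambda_1)$ is invertible in $\mathbb{Q}(\lambda_1,\lambda_2,\lambda_3)$, the class $R(\xi)\in H^*(\mathrm{Sym}^n(E))\otimes\mathbb{Q}(\lambda_i)$ is well defined as the Taylor expansion of $R$ at $\xi=0$, and I would reduce the theorem to showing $\int_{\mathrm{Sym}^n(E)}R(\xi)^{n-1}R(\xi')=0$.

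First I would record the intersection theory on the $\mathbb{P}^{n-1}$-bundle $\mathrm{AJ}\colon \mathrm{Sym}^n(E)\to\mathrm{Pic}^n(E)\cong E$. As $f=\mathrm{AJ}^*\eta$ for the point class $\eta\in H^2(E)$, we have $f^2=0$ and $\int_{\mathrm{Sym}^n(E)}f\xi^{n-1}=\int_E\eta\cdot\mathrm{AJ}_*(\xi^{n-1})=1$. The Chern character computation in the proof of Lemma~\ref{lem on comb formula of local elliptic curve} gives $c_1(\pi_{M*}\mathcal{L})=(n-1)\eta$, so the Grothendieck relation for $\oO_{\mathrm{Sym}^n(E)}(1)$ on $\mathrm{Proj}\,\mathrm{Sym}((\pi_{M*}\mathcal{L})^\vee)$ reads
\begin{align*}
\xi^n=-(n-1)\,f\,\xi^{n-1},
\end{align*}
equivalently $\mathrm{AJ}_*(\xi^{n-1})=1$, $\mathrm{AJ}_*(\xi^n)=-(n-1)\eta$, and $\mathrm{AJ}_*(\xi^k)=0$ for $k<n-1$ or $k>n$ (all higher Segre classes of a bundle on the curve $E$ vanish).

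Next, using $f^2=0$ and $R(\xi')=R(\xi)+(n-1)f\,R'(\xi)$ together with $R^{n-1}R'=\tfrac1n(R^n)'$, I would expand
\begin{align*}
R(\xi)^{n-1}R(\xi')=S(\xi)+\tfrac{n-1}{n}\,f\,S'(\xi),\qquad S:=R^n.
\end{align*}
Writing $S(\xi)=\sum_k c_k\xi^k$ and integrating term by term with the pushforward above, the first summand contributes $\int_{\mathrm{Sym}^n(E)}S(\xi)=\int_E\mathrm{AJ}_*S(\xi)=-(n-1)c_n$, while the second contributes $\tfrac{n-1}{n}\int_{\mathrm{Sym}^n(E)}f\,S'(\xi)=\tfrac{n-1}{n}\int_E\eta\cdot\mathrm{AJ}_*S'(\xi)=\tfrac{n-1}{n}\cdot n c_n=(n-1)c_n$, since only $k=n-1,n$ survive. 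The two contributions cancel, so $\int_{\mathrm{Sym}^n(E)}R(\xi)^{n-1}R(\xi')=0$ and hence $P_{n,2}(\oO)=0$ for $n>0$; the two remaining fixed components, obtained by permuting $\lambda_1,\lambda_2,\lambda_3$, vanish by the identical computation.

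The step I expect to be delicate is fixing the sign in $\xi^n=-(n-1)f\xi^{n-1}$: it is exactly this sign, dictated by the convention for $\oO(1)$ on $\mathrm{Proj}\,\mathrm{Sym}((\pi_{M*}\mathcal{L})^\vee)$ (i.e. by the vanishing of the top Chern class of the tautological subbundle), that makes the two contributions cancel rather than add. It is worth emphasizing that the cancellation is forced by the shape $R(\xi)^{n-1}R(\xi+(n-1)f)$ alone and is independent of the particular $R$; this reflects the clean relation $\xi^{n-1}\xi'=\xi^n+(n-1)f\xi^{n-1}=0$ in $H^*(\mathrm{Sym}^n(E))$.
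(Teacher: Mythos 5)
Your proposal is correct, and it reaches the key vanishing by a genuinely different mechanism than the paper. Both arguments start from Lemma~\ref{lem on comb formula of local elliptic curve}, and both in fact establish the stronger claim that $\int_{\mathrm{Sym}^n(E)}R(\xi)^{n-1}R(\xi')=0$ for an \emph{arbitrary} $R$ of this shape. But where you use nilpotency of $f$ to write $R(\xi')=R(\xi)+(n-1)f\,R'(\xi)$, the calculus identity $R^{n-1}R'=\tfrac{1}{n}(R^n)'$, and the Grothendieck relation $\xi^n=-(n-1)f\,\xi^{n-1}$ to reduce everything to the single Taylor coefficient $c_n$ of $S=R^n$, the paper instead expands $R(\xi)^{n-1}R(\xi')$ into monomials, evaluates each one via the intersection numbers recorded in \eqref{equat on xi}, namely $\int_{\mathrm{Sym}^n(E)}\xi^{a}(\xi')^{n-a}=(n-1)(n-a-1)$, and then kills the resulting sum by a multinomial identity: the coefficient of each monomial $a_0^{P_0}\cdots a_n^{P_n}$ is proportional to $\sum_k(k-1)P_k=\sum_k kP_k-\sum_k P_k=n-n=0$. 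Your derivative trick eliminates that combinatorial bookkeeping entirely and makes the $R$-independence of the cancellation transparent; the paper's route, conversely, needs nothing beyond the three intersection numbers $\int\xi^n=1-n$, $\int f\xi^{n-1}=1$, $f^2=0$ that it quotes, at the cost of the partition counting. Your sign discussion is also exactly the right thing to worry about, and your sign is right: $\xi^n=-(n-1)f\,\xi^{n-1}$ integrates to $\int\xi^n=1-n$, matching the paper's stated intersection number, so the two accounts agree. One small slip in your closing parenthetical: the relation comes from the vanishing of the top Chern class of the tautological \emph{quotient} bundle (equivalently, of $\mathrm{AJ}^*(\pi_{M*}\mathcal{L})\otimes\oO_{\mathrm{Sym}^n(E)}(1)$, which carries a canonical nowhere-vanishing section), not of the tautological subbundle $\oO(-1)$, whose top Chern class is $-\xi\neq 0$; the relation you actually use is nevertheless the correct one, so this does not affect the argument.
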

\begin{proof}
We are left to prove the vanishing of the formula obtained in Lemma \ref{lem on comb formula of local elliptic curve}. Let 
\begin{align}\label{equ on poly f}f(\xi):=\frac{(\xi-2\lambda_1)(\xi+\lambda_2)(\xi+\lambda_3)}{(\xi-\lambda_1+\lambda_2)(\xi-\lambda_1+\lambda_3)}. \end{align}
By the vanishing $\xi^i=0$ for $i>n$, we know $f(\xi)$ is a polynomial with coefficients to be rational functions of $\lambda_1, \lambda_2, \lambda_3$.
We claim for any such polynomial (not necessarily as \eqref{equ on poly f}), we have 
\begin{align*}
\int_{\mathrm{Sym}^{n}(E)}f(\xi)^{n-1}f(\xi')=0.
\end{align*}
Note that 
$$\xi^n=1-n, \quad \xi^{n-1}\cdot f=1, \quad f^2=0,$$  
which implies 
\begin{align}\label{equat on xi}\xi^a\cdot (\xi')^{n-a}=(n-1)(n-a-1), \quad \forall\,\, 0\leqslant a\leqslant n. \end{align}
For a polynomial $f(x)=\sum_{0\leqslant i\leqslant n}a_ix^i$ with $a_i\in \mathbb{Q}(\lambda_1,\lambda_2,\lambda_3)$, we have 
\begin{align}\label{equ comp1}
\nonumber
\left(f(\xi)^{n-1}f(\xi')\right)_{\mathrm{deg}=n}&=\sum_{\begin{subarray}{c}i_1+\cdots+i_n=n \\ i_1,\ldots,i_n\geqslant 0 \end{subarray}}a_{i_1}\cdots a_{i_n}\left(\xi^{i_1+\cdots +i_{n-1}}\cdot \xi^{' i_n}\right) \\  \nonumber
&=(n-1)\sum_{\begin{subarray}{c}i_1+\cdots+i_n=n \\ i_1,\ldots,i_n\geqslant 0 \end{subarray}}a_{i_1}\cdots a_{i_n}\cdot (i_{n}-1) \\  
&=(n-1)\sum_{k=0}^{n}(k-1)\,a_k\cdot \sum_{\begin{subarray}{c}i_1+\cdots+i_{n-1}=n-k \\ i_1,\ldots,i_{n-1}\geqslant 0 \end{subarray}}a_{i_1}\cdots a_{i_{n-1}}.
\end{align}
To prove this is zero, it is enough to show for any collection $P_0,P_1,\ldots,P_n\in \mathbb{Z}_{\geqslant 0}$ of $(n+1)$ numbers such that 
\begin{align}\label{equ label1}
&P_0+ P_1+\cdots+ P_n=n, \\
\label{equ label2} 
&0\cdot P_0+1\cdot P_1+\cdots+n\cdot P_n=n, 
\end{align}
the coefficient of term $a_0^{P_0}a_1^{P_1}\cdots a_n^{P_n}$ appearing in \eqref{equ comp1} is zero.
We denote 
$$S(P_0,P_1,\ldots,P_n)\in \mathbb{Z}_{\geqslant 0}$$
to be the number of collections of $n$ nonnegative integers which up to permutations are of form  
$$(\underbrace{0,\ldots,0}_{P_0},\underbrace{1,\ldots,1}_{P_1},\ldots,\underbrace{n,\ldots,n}_{P_n}).  $$
It is elementary to see 
$$S(P_0,P_1,\ldots,P_n)=\frac{n!}{P_0!\,P_1!\cdots P_n!}, $$
and 
\begin{align*}
\sum_{k=0}^{n}(k-1)\,a_k\cdot \sum_{\begin{subarray}{c}i_1+\cdots+i_{n-1}=n-k \\ i_1,\ldots,i_{n-1}\geqslant 0 \end{subarray}}a_{i_1}\cdots a_{i_{n-1}}&=
\sum_{k=0}^{n}(k-1)S(P_0,\ldots,P_{k-1},P_k-1,P_{k+1},\ldots,P_n) \\
&=\sum_{k=0}^{n}\frac{(k-1)\,n!}{P_0!\cdots\,P_{k-1}!\,(P_{k}-1)!\,P_{k+1}!\cdots P_n!}
\end{align*}
By multiplying $P_0!\cdots P_n!$, the above expression is zero if and only if 
$$\sum_{k=0}^n(k-1)P_k=0, $$
which follows from subtracting \eqref{equ label2} by \eqref{equ label1}.
\end{proof}
\begin{rmk}
Assuming our equivariant invariants equal to global invariants defined by virtual classes, the vanishing follows from \eqref{equat on xi} by taking non-equivariant limit $\lambda_1,\lambda_2,\lambda_3\to 0$ in \eqref{equ of P_n,2}. 
\end{rmk}

\providecommand{\bysame}{\leavevmode\hbox to3em{\hrulefill}\thinspace}
\providecommand{\MR}{\relax\ifhmode\unskip\space\fi MR }
\providecommand{\MRhref}[2]{%
 \href{http://www.ams.org/mathscinet-getitem?mr=#1}{#2}}
\providecommand{\href}[2]{#2}

\end{document}